\documentclass[letter]{amsart}
\usepackage{amssymb,amsmath}
\usepackage{stmaryrd,paralist}
\usepackage{bbm}
\usepackage{graphics,graphicx,tikz-cd}

\usepackage{subfigure}
\usepackage[margin=1.4 in]{geometry}
\usepackage{hyperref,color}
\graphicspath{{Figures/}} 
\graphicspath{{./Figures/}}

\usepackage{url}

\newtheorem{theo}{Theorem}
\numberwithin{theo}{section}
\newtheorem{thm}[theo]{Theorem}

\newtheorem{lemma}[theo]{Lemma}
\newtheorem{prop}[theo]{Proposition}

\newtheorem{definition}[theo]{Definition}
\newtheorem{notation}[theo]{Notation}
\newtheorem{cor}[theo]{Corollary}
\theoremstyle{remark}
\newtheorem{remark}[theo]{Remark}
\newtheorem{example}[theo]{Example}
\newtheorem{rk}[theo]{Remark}

{\medbreak}

\newcommand{\ds}{\displaystyle}
\def\ni{\noindent}

\newcommand{\bitem}{\begin{compactitem}}
\newcommand{\eitem}{\end{compactitem}}

\def\ZZ{\mathbb{Z}}
\def\NN{\mathbb{N}}
\def\QQ{\mathbb{Q}}
\def\RR{\mathbb{R}}

\def\mA{\mathcal{A}}
\def\mB{\mathcal{B}}
\def\mC{\mathcal{C}}
\def\mD{\mathcal{D}}
\def\mE{\mathcal{E}}
\def\mF{\mathcal{F}}
\def\mG{\mathcal{G}}
\def\mH{\mathcal{H}}
\def\mL{\mathcal{L}}

\def\mP{\mathcal{P}}
\def\mR{\mathcal{R}}

\def\mT{\mathcal{T}}

\def\mShi{\mathcal{S}hi}

\newcommand{\Id}{\textrm{Id}}

\newcommand{\de}{\delta}

\newcommand{\Om}{\Omega}
\newcommand{\Ga}{\Gamma}
\newcommand{\De}{\Delta}

\newcommand{\Ups}{\Upsilon}

\newcommand{\ov}[1]{\overline{#1}}
\newcommand{\wt}[1]{\widetilde{#1}}

\newcommand{\bPhi}{\ov{\Phi}}

\newcommand{\fig}[3]{\begin{figure}[h!]\begin{center}\includegraphics[#1]{#2.pdf}\end{center}\caption{#3}\label{fig:#2}\end{figure}}

\newcommand{\mm}{\textbf{m}}

\newcommand{\fS}{\mathfrak{S}} 
\DeclareMathOperator{\one}{\textbf{1}}

\DeclareMathOperator{\cadet}{\!\textrm{-cadet}}
\DeclareMathOperator{\child}{\!\textrm{-child}}
\renewcommand{\path}{\textrm{path}}
\DeclareMathOperator{\drift}{\textrm{drift}}
\newcommand{\pathT}{\path_T}
\DeclareMathOperator{\driftT}{\drift_T}
\DeclareMathOperator{\blocks}{\textrm{blocks}}
\DeclareMathOperator{\shadow}{\textrm{shadow}}
\DeclareMathOperator{\start}{\textrm{start}}
\DeclareMathOperator{\eend}{\textrm{end}}
\DeclareMathOperator{\comp}{\textrm{comp}}

\DeclareMathOperator{\tr}{\textrm{Triple}}
\DeclareMathOperator{\trmn}{\tr_m^n}

\newcommand{\wT}{{\wt{T}}}
\newcommand{\vD}{{\vec{\mD}}}
\newcommand{\oV}{{\ov{V}}}
\newcommand{\oD}{{\ov{D}}}
\newcommand{\oE}{{\ov{E}}}
\newcommand{\oG}{{\ov{\mG}}}

\DeclareMathOperator{\Tmn}{\mT_m^n}
\DeclareMathOperator{\Tmnd}{\mT_{mn}^d}
\DeclareMathOperator{\oT}{\ov{\mT}}
\DeclareMathOperator{\oTmn}{\ov{\mT}_m^n}
\DeclareMathOperator{\wA}{\wt{\mA}}

\DeclareMathOperator{\precT}{\prec_T}
\DeclareMathOperator{\precwT}{\prec_{\wt T}}
\DeclareMathOperator{\preceqT}{\preceq_T}
\DeclareMathOperator{\succeqT}{\succeq_T}
\DeclareMathOperator{\succeqwT}{\succeq_{\wt T}}
\DeclareMathOperator{\precL}{\prec_L}
\DeclareMathOperator{\preceqL}{\preceq_L}

\newcommand{\musim}{\overset{\mu}{\sim}}
\newcommand{\nmusim}{\overset{\mu}{\nsim}}
\newcommand{\Lsim}{\overset{L}{\sim}}

\newcommand{\XX}{e^{x}-1}

\begin{document}

\author[Olivier Bernardi]{Olivier Bernardi}
\thanks{Department of Mathematics, Brandeis University, Waltham MA, USA.
bernardi@brandeis.edu.\\
}

\title{Bijections for faces of braid-type arrangements}
\date{\today}

\begin{abstract}
We establish a general bijective framework for encoding faces of some classical hyperplane arrangements.
Precisely, we consider hyperplane arrangements in $\RR^n$ whose hyperplanes are all of the form $\{x_i-x_j=s\}$ for some $i,j\in[n]$ and $s\in \ZZ$.
Such an arrangement $\mA$ is \emph{strongly transitive} if it satisfies the following condition: if $\{x_i-x_j=s\}\notin \mA$ and $\{x_j-x_k=t\}\notin \mA$ for some $i,j,k\in [n]$ and $s,t\geq 0$, then $\{x_i-x_k=s+t\}\notin \mA$.
For any strongly transitive arrangement $\mA$, we establish a bijection between the faces of $\mA$ and some set of decorated plane trees. 
\end{abstract}

\maketitle

\section{Introduction}\label{sec:intro}
%
%

In this article we establish some bijective results about the faces of some classical families of hyperplane arrangements. 
Specifically, we consider real hyperplane arrangements made of a finite number of hyperplanes of the form 
\begin{equation}\label{eq:hyperplane}
\{(x_1,\ldots,x_n)\in \RR^n ~\mid~ x_i-x_j=s\},
\end{equation} 
with $i,j\in\{1,\ldots,n\}$ and $s\in \ZZ$. We call them \emph{braid-type arrangements}. From now on, we make an abuse of notation and denote by $\{x_i-x_j=s\}$ the hyperplane in~\eqref{eq:hyperplane}.

Given a set of integers $S\subseteq \ZZ$ we define, for every dimension $n>0$, the braid-type arrangement $\mA_S^n\subset \RR^n$ as follows:
$$\mA_S^n:=\bigcup_{\substack{1\leq i<j\leq n\\ s\in S}}\{x_i-x_j=s\}.$$
Classical examples include the \emph{braid}, \emph{Catalan}, \emph{Shi}, \emph{semiorder}, and \emph{Linial} arrangements, which correspond to $S=\{0\},~\{-1,0,1\},~\{0,1\},~\{-1,1\}$, and $\{1\}$ respectively. These arrangements are represented
in Figure~\ref{fig:classical-arrangements}.


\fig{width=\linewidth}{classical-arrangements}{The braid, Catalan, Shi, semiorder, and Linial arrangements in dimension $n=3$. Here and later, the braid-type arrangements in dimension 3 are represented by drawing their intersection with the hyperplane $\{(x_1,x_2,x_3)\mid x_1+x_2+x_3=0\}$.}

There is an extensive literature on counting regions of braid-type arrangements, starting with the work of Shi \cite{Shi:nb-regions,Shi:nb-regions-Weyl}. We refer the reader to~\cite{Orlik:hyperplane-arrangements} or \cite{Stanley:hyperplane-arrangements} for an introduction to the general theory of hyperplane arrangements. Seminal counting results about braid-type arrangements were established by Stanley~\cite{Stanley:hyperplane-tree-inversions,Stanley:hyperplane-interval-orders-overview}, Postnikov and Stanley~\cite{Postnikov:coxeter-hyperplanes}, and Athanasiadis~\cite{Athanasiadis:finite-field-method,Athanasiadis:PhD}. Since then, the subject has become quite popular among combinatorialists, and many beautiful counting formulas and bijections were discovered for various families of arrangements; see in particular \cite{Ardila:Tutte-hyperplanes,Armstrong:Shi-Ish,Athanasiadis:free-deformations,Athanasiadis:Catalan-Hyperplanes,Athanasiadis:free-deformations,Athanasiadis:bijection-Shi,Corteel:hyperplane-valued-graphs,Gessel:labeled-trees-Shur-positive,Headley:hyperplane-Weyl-groups,Hopkins:bigraphical-arrangements,Hopkins:G-Parking-labelling,Leven:bijections-Ish-Shi}.

By contrast, there is a paucity of results about lower dimensional faces in braid-type arrangements. Notable exceptions are enumerative formulas by Athanasiadis~\cite{Athanasiadis:finite-field-method}, and bijections by Levear~\cite{Levear} about the Catalan and Shi arrangements.

In this article we develop a general framework for bijectively encoding the faces of a large class of braid-type arrangements. Our framework applies to all \emph{strongly transitive} arrangements, which are the braid-type arrangements $\mA$ satisfying the following condition: if $\{x_i-x_j=s\}\notin \mA$ and $\{x_j-x_k=t\}\notin \mA$ for some distinct indices $i,j,k$ and $s,t\in \NN$, then $\{x_i-x_k=s+t\}\notin \mA$. For any such arrangement $\mA\subseteq \RR^n$ we give an explicit bijection between the set of faces of $\mA$ and a set of trees with $n$ labeled nodes and some marked edges (with the number of marked edges corresponding to the codimension of the associated face of $\mA$). This applies in particular to the braid, Catalan, Shi and semiorder arrangements.
For the case of the Catalan and Shi arrangements, our bijection coincides with the one established by Levear~\cite{Levear}, up to minor changes in notation. 

The bijections for faces of braid-type arrangements presented here are an extension of a general bijective framework for regions of (transitive) arrangements that we established in \cite{OB}. We actually use the results from \cite{OB} to establish those in the present article.

The article is organized as follows.
In Section~\ref{sec:background} we set our notation, and recall some necessary background from \cite{OB}. 
In Section~\ref{sec:main} we state our main result, which gives a bijection for faces of strongly transitive arrangements.
In Section~\ref{sec:exp} we apply our bijection to a few classes of strongly transitive arrangements. In particular we discuss a multivariate generalization of the Catalan arrangement, as well as a family of arrangements interpolating between the Catalan and Shi arrangements. 
In Section~\ref{sec:GF} we derive the generating function for the faces of the Catalan and semiorder arrangements, and discuss a more general setting under which the generating function of faces satisfies a polynomial equation.
The proof of our main results is given in Section~\ref{sec:proof}.


\section{Notation and background}\label{sec:background}
In this section we set our notation and recall some relevant results from \cite{OB}.

We denote by $\NN=\{0,1,\ldots\}$ the set of non-negative integers.
For integers $m,n$, we define $[m;n]:=\{k\in\ZZ\mid m\leq k\leq n\}$, and $[n]:=[1;n]$.
For a positive integer $n$, we denote by $\fS_n$ the set of permutations of $[n]=\{1,\ldots,n\}$. 

For a real hyperplane arrangement $\mA$, we denote by $\mR(\mA)$ the set of regions of $\mA$.

\begin{definition}
Let $m,n\in \NN$. The \emph{$m$-Catalan arrangement in dimension $n$} is
$$\mA_m^n=\bigcup_{\substack{1\leq i<j\leq n\\ s\in[-m;m]}}\{x_i-x_j=s\}\subseteq\RR^n.$$
\end{definition}

We will now introduce a canonical way to label the hyperplanes in braid-type arrangements.

\begin{notation}
For $m,n\in \NN$ we define 
$$\trmn:=\{(i,j,s)\mid i,j\in [n],~s\in[0;m]\textrm{ such that }i\neq j \textrm{ and }(s>0\textrm{ or }i>j)\}.$$
\end{notation}
Observe that each hyperplane of $\mA_m^n$ is of the form $\{x_i-x_j=s\}$ for exactly one triple $(i,j,s)$ in $\trmn$. In particular,
$$\mA_m^n=\bigcup_{(i,j,s)\in\trmn}\{x_i-x_j=s\}.$$
For an arrangement $\mA\subseteq \mA_m^n$ we define 
$$\trmn(\mA):=\{(i,j,s)\in\trmn\mid \{x_i-x_j=s\}\in \mA\},$$
so that 
$$\ds \mA=\bigcup_{(i,j,s)\in \trmn(\mA)}\{x_i-x_j=s\}.$$

\begin{definition}
An arrangement $\mA\subseteq \mA_m^n$ is \emph{transitive} if for all distinct indices $i,j,k\in[n]$ and integers $s,t\geq 0$ such that $(i,j,s)$ and $(j,k,t)$ are in $\trmn$ the following holds: \\
\centerline{if $\{x_i-x_j=s\}\notin \mA$ and $\{x_j-x_k=t\}\notin \mA$, then $\{x_i-x_k=s+t\}\notin \mA$.}
\end{definition}

\begin{definition}
A \emph{$(m,n)$-tree} is a rooted $(m+1)$-ary tree with $n$ nodes labeled with distinct labels in $[n]$ (the leaves have no labels). 
We denote by $\mT_m^n$ the set of all $(m,n)$-trees (there are $\frac{n!}{mn+1}{(m+1)n \choose n}$ of them). 

For a tree $T\in\mT_m^n$ we identify the nodes with their labels in $[n]$ (so that the node set of $T$ is~$[n]$).
By definition, a node $j\in [n]$ of $T$ has exactly $m+1$ (ordered) children, which are denoted by $0\child(j)$, $1\child(j)$, \ldots, $m\child(j)$ respectively.

The node $i\in[n]$ is the \emph{$s$-cadet}\footnote{Here the term \emph{cadet} is meant as ``youngest surviving child''.} of the node $j\in[n]$ in $T$ if $i=s\child(j)$ and $t\child(j)$ is a leaf for all $t\in[s+1;m]$. In this case, we write $$i=s\cadet(j),$$
 and we say that $\{i,j\}$ is a \emph{cadet edge} of $T$.
\end{definition}

\begin{definition}
Let $\mA\subseteq \mA_m^n$ be an arrangement. We define 
$$\Tmn(\mA):=\{T\in\mT_m^n\mid \forall (i,j,s)\in\trmn\setminus\trmn(\mA) ,~i\neq s\cadet(j)\}.$$
\end{definition}
It was established in \cite{OB} that, for any transitive arrangement $\mA$, the regions of $\mA$ are in bijection with the trees in $\Tmn(\mA)$.
In order to describe the bijection we need to introduce a total order on the vertices of each tree in $\mT_m^n$. 

\begin{definition}
For every vertex $v$ of in a tree $T$, we consider the sequence of vertices $v_0,v_1,\ldots,v_k=v$ on the path of $T$ from the root $v_0$ to the vertex $v$, and define 
$$\pathT(v):=(s_1,\ldots,s_k),$$ 
where $v_i=s_i\child(v_{i-1})$ for all $i\in[k]$. 
We also define the \emph{drift} of $v$ as $$\driftT(v):=\sum_{i=1}^k s_i.$$
\end{definition}

\begin{definition}
Let $T$ be a tree in $\mT_m^n$. We define a total order $\preceqT$ on the vertices of $T$ as follows.
Let $v,w$ be distinct vertices of $T$. Let $\pathT(v)=(s_1,\ldots,s_k)$ and $\pathT(w)=(t_1,\ldots,t_\ell)$. 
Then $v\precT w$ if either 
\bitem
\item $\driftT(v)<\driftT(w)$, or 
\item $\driftT(v)=\driftT(w)$ and there exists $j\leq k$ such that $(s_1,\ldots,s_j)=(t_1,\ldots,t_j)$ and ($j=k$ or $s_{j+1}>t_{j+1}$).
\eitem
\end{definition}
The order $\precT$ is indicated for the tree represented in Figure~\ref{fig:prec-order}.

\fig{width=.25\linewidth}{prec-order}{The order $\preceq_T$ for a binary tree $T$. The vertices of $T$ are ordered as follows: $a\precT b\precT c\precT \cdots \precT s$. Here the horizontal placement of vertices corresponds to their drift in the tree. Hence, in this representation, the relation $v\prec_T w$ can be thought as meaning that either ``$v$ is strictly to the left of $w$'' in the tree $T$ or ``$v$ is at the vertical of $w$ but below $w$'' in the tree~$T$.}

\begin{definition}
Let $\mA\subseteq \mA_m^n$ be an arrangement. For every tree $T\in \Tmn(\mA)$ we define the polyhedron $\Phi_A(T)\subseteq \RR^n$ as follows:
$$\phi_\mA(T)= \left(\bigcap_{\substack{(i,j,s)\in \trmn(\mA) \\ i\precT s\child(j)}}\{x_i-x_j<s\}\right)\cap \left(\bigcap_{\substack{(i,j,s)\in \trmn(\mA)\\ i\succeqT s\child(j)}}\{x_i-x_j>s\}\right).$$
\end{definition}
It is clear from the definition that $\phi_\mA(T)$ is either empty or a region of $\mA$. In fact, it is always a region of $\mA$, and the following result was proved in \cite[Theorem 8.8]{OB}.

\begin{thm}[\cite{OB}]\label{thm:regions}
If an arrangement $\mA\subseteq\mA_m^n$ is transitive, then $\Phi_\mA$ is a bijection between the set $\Tmn(\mA)$ of trees and the set $\mR(\mA)$ of regions of $\mA$.
\end{thm}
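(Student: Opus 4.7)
The plan is to verify three properties of $\Phi_\mA$: well-definedness (each $\phi_\mA(T)$ is non-empty, hence a region of $\mA$), injectivity, and surjectivity.

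For well-definedness, I would exhibit an explicit point in $\phi_\mA(T)$. A natural candidate is
\[ x^\star_v:=\driftT(v)+\epsilon\,\rho(v),\qquad v\in[n], \]
for small $\epsilon>0$ and a perturbation $\rho\colon[n]\to\RR$ to be chosen. For $(i,j,s)\in\trmn(\mA)$ with $w:=s\child(j)$, one has
\[ x^\star_i-x^\star_j-s=\bigl(\driftT(i)-\driftT(w)\bigr)+\epsilon\bigl(\rho(i)-\rho(j)\bigr). \]
When $\driftT(i)\neq\driftT(w)$, the sign for small $\epsilon$ is carried by the drift term, which automatically matches the drift-based first clause in the definition of $\preceqT$. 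The delicate case is $\driftT(i)=\driftT(w)$: here the sign of $\rho(i)-\rho(j)$ must agree with the tie-breaking clause of $\preceqT$ applied to the pair $(i,w)$, not to $(i,j)$, so $\rho$ cannot simply be a linear extension of $\preceqT$. A choice that works is to let $\rho(v)$ be the rank of $v$ in a pre-order traversal of $T$ that visits the children of each vertex in reverse position order $m,m-1,\ldots,0$. The verification reduces to a case analysis on the relative positions of $i$, $j$, and $w$ (ancestor/descendant versus distinct-subtree configurations) and exploits the fact that $\driftT(i)=\driftT(j)+s$ precludes branching at $j$ itself. This case analysis is the main technical step.

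For injectivity, the sign pattern of $\phi_\mA(T)$ at a hyperplane $\{x_i-x_j=s\}\in\mA$ records whether $i\precT s\child(j)$. I would show that this data lets one reconstruct $T$: for each labeled node $j$, one iteratively reads off the position and label of its cadet child (if any). The defining condition of $\Tmn(\mA)$—that cadet edges are allowed only at triples in $\trmn(\mA)$—makes the reconstruction unambiguous even when some hyperplanes are absent from $\mA$, and transitivity is used here to ensure that the partial information from the signs in $\trmn(\mA)$ suffices.

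For surjectivity, the cleanest route is to prove $|\Tmn(\mA)|=|\mR(\mA)|$ and conclude by injectivity. I would induct on $|\trmn|-|\trmn(\mA)|$. The base case $\mA=\mA_m^n$ gives $\Tmn(\mA)=\mT_m^n$ of cardinality $\frac{n!}{mn+1}\binom{(m+1)n}{n}$, which matches the classical enumeration of regions of the $m$-Catalan arrangement. In the inductive step, removing a hyperplane $\{x_i-x_j=s\}$ from $\mA$ merges certain pairs of regions while simultaneously enlarging $\Tmn(\mA)$ by the trees acquiring the cadet edge $i=s\cadet(j)$. The hard part will be showing that transitivity is exactly the condition placing these two operations in bijective correspondence, so that the bijection is preserved through each inductive step.
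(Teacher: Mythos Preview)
This theorem is not proved in the present paper at all: it is quoted from~\cite{OB} (as Theorem~8.8 there) and used as a black box in the proof of Theorem~\ref{thm:main}. So there is no ``paper's own proof'' to compare against, and any argument you give will necessarily be an independent proof.

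That said, your sketch has a genuine gap in the surjectivity part. Your induction is on $|\trmn|-|\trmn(\mA)|$, with base case $\mA=\mA_m^n$. In the inductive step you want to compare a transitive arrangement $\mA$ (with $k+1$ hyperplanes removed) to an arrangement $\mA'=\mA\cup\{H\}$ (with $k$ removed) and invoke the inductive hypothesis on~$\mA'$. But nothing guarantees that $\mA'$ is transitive: adding back a single hyperplane to a transitive arrangement can destroy transitivity (for instance, if $\{x_i-x_j=s\}$ and $\{x_i-x_k=s+t\}$ are both absent from $\mA$ and you add back only $\{x_j-x_k=t\}$). For the induction to go through you would need to show that any transitive $\mA\subseteq\mA_m^n$ can be reached from $\mA_m^n$ by a chain of transitive arrangements differing by one hyperplane at each step. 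This is a non-obvious combinatorial statement about the poset of transitive subarrangements, and you have not addressed it. Without it the counting argument collapses.

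The well-definedness sketch is more promising but also incomplete. You correctly isolate the critical case $\driftT(i)=\driftT(w)$ and note that the required sign is governed by the comparison of $i$ with $w=s\child(j)$, while the quantity whose sign you control is $\rho(i)-\rho(j)$. Your proposed reverse-pre-order rank is a reasonable guess, but the claim that it works is exactly the content of the ``case analysis'' you defer, and some of those cases (for instance when $w$ is a leaf lying strictly between $i$ and $j$ in the $\preceqT$ order) are not obviously handled. This is the heart of the argument and cannot be left as an exercise.
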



\section{Main results}\label{sec:main}
In this section we state our main bijective result (Theorem~\ref{thm:main}), which is an explicit bijection between the set of faces of any ``strongly transitive'' arrangement and some set of marked labeled trees.

\begin{definition}
An arrangement $\mA\subseteq \mA_m^n$ is \emph{strongly transitive} if for all distinct indices $i,j,k\in[n]$ and integers $s,t\geq 0$ the following holds: \\
\centerline{if $\{x_i-x_j=s\}\notin \mA$ and $\{x_j-x_k=t\}\notin \mA$, then $\{x_i-x_k=s+t\}\notin \mA$.}
\end{definition}
Note that strongly transitive arrangements are transitive. The only difference between transitive and strongly transitive arrangements is that the conclusion $\{x_i-x_k=s+t\}\notin \mA$ needs to hold even in the cases ($s=0$ and $i<j$) or ($t=0$ and $j<k$).
In fact, an arrangement $\mA\subseteq \mA_m^n$ is strongly transitive if and only if $\pi(\mA)$ is transitive for every permutation $\pi\in \fS_n$ (where the action of $\pi$ on $\RR^n$ is the permutation of coordinates: $\pi\cdot (x_1,\ldots,x_n):=(x_{\pi^{-1}(1)},\ldots,x_{\pi^{-1}(n)})$).

\begin{example} 
The (extended) Catalan, Shi and semiorder arrangements are strongly transitive. 
The Linial arrangement is transitive, but not strongly transitive. 
Any transitive arrangement containing the braid arrangement (that is, containing all the hyperplanes of the form $\{x_i-x_j=0\}$) is strongly transitive. 
\end{example}

\begin{definition}
A \emph{marked $(m,n)$-tree} is a pair $(T,\mu)$, where $T\in\mT_m^n$ is a $(m,n)$-tree and $\mu$ is a set of cadet edges of $T$ such that if an edge $e\in \mu$ is of the form $e=\{j,0\cadet(j)\}$ then $j<0\cadet(j)$. We refer to the edges in $\mu$ as the \emph{marked edges}. 

We denote by $\oT_m^n$ the set of marked $(m,n)$-trees. 
\end{definition}

Note that the marked edges of a marked tree in $\oT_m^n$ form a collection of vertex-disjoint paths. A result of Levear \cite{Levear} is that the faces of the Catalan arrangement $\mA_m^n$ are in bijection with the set $\ov{\mT}_m^n$ of marked $(m,n)$-trees. In order to define a bijection for any strongly transitive arrangement we need some additional definitions.

\begin{definition}
Let $(T,\mu)\in\oT_m^n$ be a marked $(m,n)$-tree. For nodes $i,j\in[n]$, we write $i\musim j$ if $i=j$ or $i\neq j$ and all the edges on the path of $T$ between $i$ and $j$ are marked (and we write $i\nmusim j$ otherwise). This is an equivalence relation, and we call its equivalence classes the \emph{blocks} of $(T,\mu)$. Further, we denote by $\blocks(\mu)$ the set of blocks of $(T,\mu)$ (this is a partition of $[n]$).
\end{definition}


\begin{definition}\label{def:A-connected-tree}
Let $\mA\subseteq \mA_m^n$ be an arrangement. 

A block $B\subseteq [n]$ of a marked tree $(T,\mu)\in \oT_m^n$ is called \emph{$\mA$-connected} if 
the graph $G$ with vertex set $B$ and edge set 
$$E=\big\{\,\{i,j\}\mid i,j\in B~\textrm{such that}~\{x_i-x_j=\driftT(i)-\driftT(j)\}\in \mA\,\big\}$$ 
is connected. We say that the marked tree $(T,\mu)$ is \emph{$\mA$-connected} if every block of $(T,\mu)$ is $\mA$-connected.

We say that a marked tree $(T,\mu)\in\oTmn$ satisfies the \emph{$\mA$-cadet condition} if every non-marked cadet edge $e=\{i,j\}$ of $(T,\mu)$, with $i=s\cadet(j)$, satisfies ($s=0$ and $i<j$) or (there exists $i'\musim i$ and $j'\musim j$ such that the hyperplane $\{x_{i'}-x_{j'}=\driftT(i')-\driftT(j')\}$ is in $\mA$).
 
We define $\oTmn(\mA)$ as the set of marked trees in $\oTmn$ which are $\mA$-connected and satisfy the $\mA$-cadet condition.
%
\end{definition}

\begin{definition}
Let $\mA\subseteq \mA_m^n$. We associate to each marked tree $(T,\mu)$ in $\oTmn(\mA)$ a polyhedron $\bPhi_\mA(M,\mu)\subseteq \RR^n$ defined as follows:
\begin{eqnarray*}
\bPhi_\mA(T,\mu)&=&\left(\bigcap_{\substack{\{i,j\}\in \mu \\ i=s\child(j)}}\{x_i-x_j=s\}\right)\\
&&\cap\left(\bigcap_{\substack{(i,j,s)\in \trmn(\mA)\\ i\nmusim j \\ i\precT s\child(j)}}\{x_i-x_j< s\}\right)\cap \left(\bigcap_{\substack{(i,j,s)\in \trmn(\mA)\\i\nmusim j \\ i\succeqT s\child(j)}}\{x_i-x_j>s\}\right).
\end{eqnarray*}
\end{definition}

\begin{rk}\label{rk:eqL}
It is easy to check that for any marked tree $(T,\mu)$ in $\oTmn(\mA)$, one has
\begin{equation}\label{eq:L2}
\bigcap_{\substack{\{i,j\}\in \mu \\ i=s\child(j)}} \{x_i-x_j=s\} 
= \bigcap_{\substack{(i,j,s)\in \trmn(\mA)\\ i\musim j}}\{x_i-x_j=s\}.
\end{equation}
Equation~\eqref{eq:L2} holds because every block of $(T,\mu)$ is $\mA$-connected, hence both sides of~\eqref{eq:L2} consists of the set of points $(x_1,\ldots,x_n)\in \RR^n$ such that $x_i-x_j=\driftT(i)-\driftT(j)$ for all $i\musim j$.

Consequently, the polyhedron $\bPhi_\mA(T,\mu)$ is a face of $\mA$ (of codimension $|\mu|$), unless it is empty.
\end{rk}

For an arrangement $\mA\subset \RR^n$, we denote by $\mF(\mA)$ the set of faces of $\mA$. We can now state our main result.

\begin{thm}\label{thm:main}
If an arrangement $\mA\subseteq \mA_m^n$ is strongly transitive, then $\bPhi_\mA$ is a bijection between the set $\oTmn(\mA)$ of marked trees and the set $\mF(\mA)$ of faces of $\mA$. The number of marked edges of $(T,\mu)$ is equal to the codimension of the corresponding face $\bPhi_\mA(T,\mu)$.
\end{thm}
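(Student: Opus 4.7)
The plan is to reduce Theorem~\ref{thm:main} to the regions bijection of Theorem~\ref{thm:regions} by a quotient argument: the marked edges of $(T,\mu)$ will encode the equalities defining the affine span of the target face, and on that affine span the arrangement $\mA$ restricts to a \emph{lower-dimensional} strongly transitive braid-type arrangement, for which Theorem~\ref{thm:regions} already provides a bijection with trees.

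\textbf{Well-definedness of $\bPhi_\mA$.} Fix $(T,\mu)\in\oTmn(\mA)$ with blocks $\blocks(\mu)=\{B_1,\dots,B_k\}$, and let $V\subseteq\RR^n$ be the affine subspace cut out by the equality constraints in $\bPhi_\mA(T,\mu)$. By Remark~\ref{rk:eqL}, one can parametrize $V$ by $(y_1,\dots,y_k)\in\RR^k$ via $y_\ell=x_i-\driftT(i)$ for any $i\in B_\ell$ (this is well-defined because $V$ forces $x_i-x_j=\driftT(i)-\driftT(j)$ whenever $i\musim j$). I then contract each block of $T$ to a single node (labeled by its block index) to produce an $(m,k)$-tree $\wT$, and define a quotient arrangement $\wA\subseteq\mA_m^k$ by declaring $\{y_\ell-y_{\ell'}=r\}\in\wA$ iff there exist $i\in B_\ell$, $j\in B_{\ell'}$ such that $\{x_i-x_j=r+\driftT(i)-\driftT(j)\}\in\mA$. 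I would show: (a) $\wA$ is strongly transitive, using strong transitivity of $\mA$; (b) $\wT\in\mT_m^k(\wA)$, using both the $\mA$-connectedness and the $\mA$-cadet conditions; (c) under the parametrization $V\cong\RR^k$, the polyhedron $\bPhi_\mA(T,\mu)$ coincides with $\phi_{\wA}(\wT)$. Theorem~\ref{thm:regions} applied to $\wA$ then guarantees that $\bPhi_\mA(T,\mu)$ is a non-empty region of $\wA$ inside $V$, hence a face of $\mA$ of codimension $|\mu|$.

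\textbf{Surjectivity and injectivity.} For surjectivity, fix a face $F$ of $\mA$ with affine span $V$, let $\mA_F\subseteq\mA$ be the hyperplanes containing $V$, and define blocks $B_1,\dots,B_k$ as the connected components of the graph on $[n]$ whose edges are the pairs $\{i,j\}$ with $\{x_i-x_j=s\}\in\mA_F$ for some $s$. Strong transitivity implies that the equations in $\mA_F$ assign consistent drift values $\delta_i$, unique up to an additive shift per block, so that $V$ is cut out by $\{x_i-x_j=\delta_i-\delta_j\}$ as $\{i,j\}$ ranges over pairs in a common block. Restricting $\mA\setminus\mA_F$ to $V$ and reparametrizing as in Step~1 yields a strongly transitive quotient arrangement $\wA$ of which $F$ is a region; by Theorem~\ref{thm:regions}, $F$ corresponds to a unique tree $\wT\in\mT_m^k(\wA)$. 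I then recover $(T,\mu)$ by replacing each node $\ell$ of $\wT$ by a marked path through $B_\ell$, ordered by the $\delta_i$'s (and by labels, via the $0$-cadet convention, when drifts tie), and verify that $(T,\mu)\in\oTmn(\mA)$ and $\bPhi_\mA(T,\mu)=F$. Injectivity then follows: from $F$ one reads off $V$, the block partition, and the drifts up to shifts, and $\wT$ is uniquely determined by the injectivity half of Theorem~\ref{thm:regions}.

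\textbf{Main obstacle.} The crux is proving well-definedness of the quotient arrangement $\wA$ together with the membership $\wT\in\mT_m^k(\wA)$ in Step~1: this is precisely where both conditions in Definition~\ref{def:A-connected-tree} and the \emph{strong} transitivity of $\mA$ (as opposed to plain transitivity) become essential, and where the asymmetric ``$s>0$ or $i>j$'' clauses in the definition of $\trmn$ have to be reconciled with the symmetry of the block equivalence $\musim$. A secondary difficulty appears in the surjectivity step, where one must verify that every block of a face is $\mA$-connected and that the drifts are globally consistent; this again reduces to iterated applications of strong transitivity across the tight hyperplanes supporting $F$.
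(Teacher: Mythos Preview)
Your global strategy---quotient by the equalities, contract the marked paths, and invoke Theorem~\ref{thm:regions} on the restricted arrangement---is exactly the one the paper follows, so you have identified the right architecture. There is, however, a concrete error in your execution that would block the argument as written.

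\textbf{The arity of the quotient.} You assert that contracting each block produces an $(m,k)$-tree $\wT$ and that the quotient arrangement $\wA$ sits inside $\mA_m^k$. Neither is true. A block $B_\ell$ is a marked cadet path $j_1,\dots,j_q$ with $j_{p+1}=s_p\cadet(j_p)$; after contraction the single node $\ell$ must carry \emph{all} the unmarked children of $j_1,\dots,j_q$, and these are naturally indexed by their drift from $j_1$, which ranges over $[0,\,\de^\ell+m]$ where $\de^\ell=s_1+\cdots+s_{q-1}$. Hence $\ell$ has $\de^\ell+m+1$ children, not $m+1$. Correspondingly, the restricted arrangement contains hyperplanes $\{y_k-y_\ell=t\}$ with $t$ as large as $\de^\ell+m$, so $\wA\not\subseteq\mA_m^k$ in general. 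The paper handles this by placing $\wT$ in $\mT_{mn}^d$ and $\wA_L$ in $\mA_{mn}^d$ (padding with leaves on the right), and then carefully checking that these extra positions are harmless: this is the content of Lemma~\ref{lem:cadet-condition-simple}, which guarantees $\{x_k-x_\ell=t\}\in\wA_L$ for all $t\in[0,\de^\ell]$ (unless both blocks are singletons), so the ``internal'' cadet positions created by contraction never witness a forbidden triple. Without this lemma your step~(b), $\wT\in\mT_m^k(\wA)$, cannot be verified even after correcting the arity.

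\textbf{A second underestimated step.} You treat ``$\bPhi_\mA(T,\mu)$ coincides with $\phi_{\wA}(\wT)$ under the parametrization'' as routine. It is not: one must show that for every triple $(k,\ell,t)\in\tr_{mn}^d(\wA_L)$ and every representative $(i,j,s)$ in $\trmn(\mA)$, the order relation $k\prec_{\wT} t\child(\ell)$ matches $i\prec_T s\child(j)$ (with the appropriate reversal for backward representatives). The drift part of $\prec_T$ transfers cleanly, but the tie-breaking lexicographic part requires a case analysis of how the marked paths containing $i$ and $j$ sit relative to each other; the paper does this explicitly.

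In short, your outline is the right one, but you must enlarge the quotient to $\mA_{mn}^d$ and $\mT_{mn}^d$, prove an analogue of Lemma~\ref{lem:cadet-condition-simple} to control the new cadet positions, and budget real work for the order-compatibility check.
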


Theorem~\ref{thm:main} is illustrated in Figure~\ref{fig:exp-face-bijection} for an arrangement in dimension 3.

\fig{width=\linewidth}{exp-face-bijection}{The bijection $\bPhi_\mA$ for a strongly transitive arrangement $\mA\subseteq \mA_1^3$. The marked edges of the trees in $\oT_1^3(\mA)$ are indicated by bold lines.}

The special cases of Theorem~\ref{thm:main} corresponding to $\mA=\mA_m^n$ (the $m$-Catalan arrangement) and $\mA=\mA_{[-m+1;m]}^n$ (the $m$-Shi arrangement) give bijections which are the same as that of Levear \cite{Levear} (up to small changes in the presentation of the trees). We will discuss these special cases, and some others, in the following section. For now let us simply add a small remark about which arrangements of the form $\mA_S^n$ are strongly transitive. 

\begin{lemma}\label{lem:transitive-sets}
Let $S\subseteq \ZZ$ be a set of integers. The following are equivalent:
\bitem
\item[(i)] $\mA_S^n$ is strongly transitive for every integer $n> 0$,
\item[(ii)] $\mA_S^n$ is strongly transitive for at least one integer $n\geq 3$,
\item[(iii)] for all integers $s,t\notin S$, if $st\geq 0$ then $s+t\notin S$ and if $st\leq 0$ then $s-t,t-s\notin S$.
\eitem
\end{lemma}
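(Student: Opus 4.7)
The plan is to establish the cycle (i) $\Rightarrow$ (ii) $\Rightarrow$ (iii) $\Rightarrow$ (i). Since (i) $\Rightarrow$ (ii) is immediate, I focus on the two remaining implications. The central tool for both is the signed encoding
\[
\{x_i-x_j=s\}\in \mA_S^n\quad\Longleftrightarrow\quad \epsilon(i,j)\,s\in S,\qquad\text{where}\qquad \epsilon(i,j):=\operatorname{sgn}(j-i).
\]
Given distinct $i,j,k\in[n]$ and $s,t\geq 0$, I set $a:=\epsilon(i,j)\,s$, $b:=\epsilon(j,k)\,t$, $c:=\epsilon(i,k)(s+t)$, so that the hypothesis of strong transitivity reads ``$a,b\notin S$'' and its conclusion reads ``$c\notin S$.''

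For the direction (iii) $\Rightarrow$ (i), I case-split on the relative order of $i,j,k$. The two monotonic orderings ($i<j<k$ and $k<j<i$) satisfy $\epsilon(i,j)=\epsilon(j,k)=\epsilon(i,k)$, so $ab=st\geq 0$ and $c=a+b$; the first clause of (iii) then gives $c\notin S$. Each of the four remaining ``zigzag'' orderings satisfies $ab\leq 0$ and, after a short sign computation, $c\in\{a-b,\,b-a\}$; the second clause of (iii) then gives $c\notin S$. Borderline cases where $s$ or $t$ vanishes cause no trouble, since $ab=0$ makes both clauses of (iii) applicable simultaneously.

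For the direction (ii) $\Rightarrow$ (iii), I fix some $n\geq 3$ for which $\mA_S^n$ is strongly transitive and, for each pair $(u,v)\in(\ZZ\setminus S)^2$, exhibit a triple $i,j,k\in\{1,2,3\}$ together with non-negative integers $s,t$ such that strong transitivity delivers the required (iii) conclusion. For $u,v\geq 0$ I use $(i,j,k)=(1,2,3)$ with $(s,t)=(u,v)$, forcing $u+v\notin S$. For mixed signs $u\geq 0\geq v$, the choices $(i,j,k)=(1,3,2)$ and $(i,j,k)=(2,3,1)$ (both with $(s,t)=(u,-v)$) force $u-v\notin S$ and $v-u\notin S$ respectively. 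The remaining sign configurations $u,v\leq 0$ and $u\leq 0\leq v$ are handled by analogous (mirror-image) choices, or reduced to the previous ones using the evident symmetry of (iii) under $(u,v)\leftrightarrow(v,u)$ and under $(u,v)\leftrightarrow(-u,-v)$.

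The main bookkeeping effort is the systematic sign analysis across the six orderings of $\{i,j,k\}$ in the (iii) $\Rightarrow$ (i) direction, but once the signed encoding is in place each case reduces to a one-line verification, so no deeper obstacle is anticipated.
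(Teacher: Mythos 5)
Your proof is correct and follows essentially the same case analysis as the paper's, with the signed encoding $\epsilon(i,j)=\operatorname{sgn}(j-i)$ serving as a convenient notational abstraction; the paper achieves the same effect by a parallel case split over the relative order of $i,j,k$ (in the (iii)$\Rightarrow$(i) direction) and over the signs of $s,t$ (in the (ii)$\Rightarrow$(iii) direction). One caveat worth fixing: statement (iii) is \emph{not} in general invariant under $(u,v)\mapsto(-u,-v)$ — the first clause ``$uv\geq 0 \Rightarrow u+v\notin S$'' would become ``$uv\geq 0 \Rightarrow -(u+v)\notin S$'', which is a different assertion unless $S=-S$. So the claimed reduction by that symmetry does not dispatch the $u,v\leq 0$ case. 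Your other offered route, mirror-image index choices (e.g.\ $(i,j,k)=(3,2,1)$ with $(s,t)=(-u,-v)$, which yields $u+v\notin S$ directly), does work and is what the paper does; I would drop the appeal to the $(-u,-v)$ symmetry and keep only the mirror-image argument. The swap symmetry $(u,v)\leftrightarrow(v,u)$ is genuine and correctly handles $u\leq 0\leq v$.
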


\begin{proof}
Clearly, (i) implies (ii). Next, we prove that (ii) implies (iii). Suppose that $\mA_S^n$ is transitive for some $n\geq 3$. We consider some integers $s,t\notin S$, and distinguish four cases in order to prove (iii).
\bitem
\item[Case $s,t\geq 0$:] Since $\{x_1-x_2=s\}\notin \mA_S^n$ and $\{x_2-x_3=t\}\notin \mA_S^n$, we get $\{x_1-x_3=s+t\}\notin \mA_S^n$, hence $s+t\notin S$. 
\item[Case $s,t\leq 0$:] Since $\{x_3-x_2=-s\}\notin \mA_S^n$ and $\{x_2-x_1=-t\}\notin \mA_S^n$, we get $\{x_3-x_1=-s-t\}\notin \mA_S^n$, hence $s+t\notin S$. 
\item[Case $s\geq 0\geq t$:] Since $\{x_1-x_3=s\}\notin \mA_S^n$ and $\{x_3-x_2=-t\}\notin \mA_S^n$, we get $\{x_1-x_2=s-t\}\notin \mA_S^n$, hence $s-t\notin S$. Since $\{x_3-x_1=-t\}\notin \mA_S^n$ and $\{x_1-x_2=s\}\notin \mA_S^n$, we get $\{x_3-x_2=s-t\}\notin \mA_S^n$, hence $t-s\notin S$. 
\item[Case $s\leq 0\leq t$:] Since $\{x_1-x_3=t\}\notin \mA_S^n$ and $\{x_3-x_2=-s\}\notin \mA_S^n$, we get $\{x_1-x_2=t-s\}\notin \mA_S^n$, hence $t-s\notin S$. Since $\{x_3-x_1=-s\}\notin \mA_S^n$ and $\{x_1-x_2=t\}\notin \mA_S^n$, we get $\{x_3-x_2=t-s\}\notin \mA_S^n$, hence $s-t\notin S$. 
\eitem
It remains to prove that (iii) implies (i). Suppose that $S$ satisfies (iii), and fix $n>0$. We consider distinct integers $i,j,k\in [n]$, and $s,t\in \NN$ such that $\{x_i-x_j=s\}\notin \mA_S^n$ and $\{x_j-x_k=t\}\notin \mA_S^n$. We need to prove $\{x_i-x_k=s+t\}\notin \mA_S^n$, and distinguish four cases.
\bitem
\item[Case $i<j<k$:] Since $s,t\notin S$, we get $s+t\notin S$, hence $\{x_i-x_k=s+t\}\notin \mA_S^n$. 
\item[Case $i>j>k$:] Since $-s,-t\notin S$, we get $-s-t\notin S$, hence $\{x_i-x_k=s+t\}\notin \mA_S^n$. 
\item[Case $i<j>k$:] Since $s,-t\notin S$, we get $s+t,-t-s\notin S$, hence $\{x_i-x_k=s+t\}\notin \mA_S^n$. 
\item[Case $i>j<k$:] Since $-s,t\notin S$, we get $-s-t,t+s\notin S$, hence $\{x_i-x_k=s+t\}\notin \mA_S^n$. 
\eitem
\end{proof}


\section{Examples}\label{sec:exp}
In this section we apply Theorem~\ref{thm:main} to several families of braid-type arrangements. 


\subsection{From the Catalan arrangement to the Shi arrangement, and back}
Theorem~\ref{thm:main} readily implies the following result of Levear \cite{Levear} about the $m$-Catalan arrangement $\mA_m^n$.
\begin{cor}[\cite{Levear}]\label{cor:Cat}
Let $m,n$ be positive integers. The faces (of codimension $k$) of the $m$-Catalan arrangement $\mA_m^n=\mA_{[-m;m]}^n$ are in bijection with the marked trees in $\oTmn$ (having $k$ marked edges).
\end{cor}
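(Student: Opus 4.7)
The plan is to deduce Corollary~\ref{cor:Cat} directly from Theorem~\ref{thm:main}, by showing that when $\mA=\mA_m^n$ the set $\oTmn(\mA)$ appearing in that theorem coincides with the entire set $\oTmn$ of marked $(m,n)$-trees. This requires only two verifications: that $\mA_m^n$ is strongly transitive, and that every marked $(m,n)$-tree automatically meets the $\mA_m^n$-connectedness and $\mA_m^n$-cadet conditions of Definition~\ref{def:A-connected-tree}.

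First I would invoke strong transitivity of $\mA_m^n$, which is recorded in the example in Section~\ref{sec:main} and also follows from Lemma~\ref{lem:transitive-sets} applied to $S=[-m;m]$: for $s,t\geq 0$ with $s,t\notin S$ one has $s,t>m$, hence $s+t>m$, and similarly in the remaining cases. The key observation I would then highlight is that $\trmn(\mA_m^n)=\trmn$: every triple $(i,j,s)\in\trmn$ has $s\in[0;m]\subseteq[-m;m]$, so the corresponding hyperplane $\{x_i-x_j=s\}$ belongs to $\mA_m^n$.

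With this observation both conditions in Definition~\ref{def:A-connected-tree} become automatic. For any tree-edge $\{j,i\}$ with $i=s\child(j)$, the drift formula yields $\driftT(i)-\driftT(j)=s\in[0;m]$, so $\{x_i-x_j=s\}\in\mA_m^n$. Applied to each marked edge inside a block $B$ of $(T,\mu)$, this shows that every such edge contributes to the graph $G$ of Definition~\ref{def:A-connected-tree}; since $B$ is connected in $T$ via its marked edges, $G$ is connected, hence $B$ is $\mA_m^n$-connected. Applied to any non-marked cadet edge $\{i,j\}$ with $i=s\cadet(j)$, the same computation (with the trivial witnesses $i'=i$, $j'=j$) verifies the $\mA_m^n$-cadet condition. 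Therefore $\oTmn(\mA_m^n)=\oTmn$, and Theorem~\ref{thm:main} immediately yields the desired bijection $\bPhi_{\mA_m^n}:\oTmn\to\mF(\mA_m^n)$, under which the number of marked edges equals the codimension.

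There is essentially no obstacle in the argument; all the substantive content lies inside Theorem~\ref{thm:main}. Corollary~\ref{cor:Cat} just reflects the fact that $\mA_m^n$ already contains every hyperplane of the form $\{x_i-x_j=s\}$ with $s\in[0;m]$, which is precisely the range of drift-differences arising from cadet edges, so none of the refined conditions in the definition of $\oTmn(\mA)$ end up excluding any marked tree.
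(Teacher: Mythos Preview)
Your proposal is correct and follows essentially the same approach as the paper's proof: both apply Theorem~\ref{thm:main}, note the key observation that for any cadet edge $i=s\cadet(j)$ one has $\driftT(i)-\driftT(j)=s\in[0;m]$ so $\{x_i-x_j=s\}\in\mA_m^n$, and then use this to verify both the $\mA_m^n$-connectedness and $\mA_m^n$-cadet conditions trivially (with witnesses $i'=i$, $j'=j$), concluding $\oTmn(\mA_m^n)=\oTmn$.
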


\begin{proof}
It is immediate that $\mA_m^n$ is strongly transitive.
We can therefore apply Theorem~\ref{thm:main} to $\mA_m^n$, and we only need to check that $\oTmn(\mA_m^n)=\oTmn$. Before starting this proof, let us observe that for any marked tree $(T,\mu)\in\oTmn$ and any nodes $i,j$ such that $i=s\cadet(j)$ one has 
$$\{x_i-x_j=\driftT(i)-\driftT(j)\}=\{x_i-x_j=s\}\in \mA_m^n.$$

We will now check that any tree in $\oTmn$ is $\mA_m^n$-connected. Let $B$ be a block of a marked tree $(T,\mu)\in\oTmn$, and let $G$ be the graph with vertex set $B$ and edge set 
$$E=\{\{i,j\}\mid i,j\in B~\textrm{ such that }~\{x_i-x_j=\driftT(i)-\driftT(j)\}\in \mA\}.$$ 
The block $B$ corresponds to a path of cadet edges in $T$, so $B$ is of the form $B=\{i_1,\ldots,i_\ell\}$, where for all $j<k$, $i_{j+1}=s_j\cadet(i_j)$ for some $s_j$. Hence, by the above observation, the graph $G$ contains the path $i_1,\ldots,i_\ell$. Thus $G$ is connected. 
This shows that any marked tree $(T,\mu)\in\oTmn$ is $\mA_m^n$-connected. 

It remains to show that any marked tree in $\oTmn$ satisfies the $\mA_m^n$-cadet condition. This is obvious since, by the above observation, every non-marked cadet edge $e=\{i,j\}$ of $(T,\mu)$ satisfies $\{x_i-x_j=\driftT(i)-\driftT(j)\}\in \mA_m^n$.
Thus $\oTmn(\mA_m^n)=\oTmn$, as claimed.
\end{proof}

The other family of arrangements treated in \cite{Levear} consists of the \emph{$m$-Shi arrangements}
$$\mShi_m^n=\mA_{[-m+1;m]}^n=\bigcup_{\substack{1\leq i<j\leq n\\s\in[-m+1;m]}}\{x_i-x_j=s\}.$$
It is easy to check that the arrangement $\mShi_m^n$ is strongly transitive. Applying Theorem~\ref{thm:main} we recover the following result of \cite{Levear} about Shi arrangements.
\begin{cor}[\cite{Levear}]\label{cor:Shi}
Let $m,n$ be positive integers. The faces (of codimension $k$) of the $m$-Shi arrangement $\mShi_m^n$ are in bijection with the set of marked trees $(T,\mu)$ in $\oTmn$ (having $k$ marked edges) such that if $i=m\child(j)$ for some nodes $i,j\in [n]$ of $T$, then $i<j$.
\end{cor}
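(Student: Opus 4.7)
The plan is to apply Theorem~\ref{thm:main} to $\mShi_m^n$ and then identify the set $\oTmn(\mShi_m^n)$ with the marked trees described in the corollary. Strong transitivity of $\mShi_m^n$ follows from Lemma~\ref{lem:transitive-sets} applied to $S=[-m+1;m]$ by a short case check on integers $s,t\notin S$, so Theorem~\ref{thm:main} yields a codimension-preserving bijection $\bPhi_{\mShi_m^n}:\oTmn(\mShi_m^n)\to\mF(\mShi_m^n)$. The key arithmetic tool I would use is the membership criterion: for distinct $a,b\in[n]$ and $c\in\ZZ$, the hyperplane $\{x_a-x_b=c\}$ lies in $\mShi_m^n$ iff $c\in[-m+1;m]$ when $a<b$, and iff $c\in[-m;m-1]$ when $a>b$. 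Applied to a cadet edge $\{i,j\}$ with $i=s\cadet(j)$ (so $\driftT(i)-\driftT(j)=s$), this says $\{x_i-x_j=s\}\in\mShi_m^n$ unless $s=m$ and $i>j$. Observe also that a labeled $m$-child is automatically an $m$-cadet (vacuously), so the conditions ``$i=m\child(j)$'' and ``$i=m\cadet(j)$'' coincide for $i,j\in[n]$.

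For sufficiency, I would assume $(T,\mu)\in\oTmn$ satisfies the corollary's label condition. To check $\mShi_m^n$-connectedness of a block with cadet-path $v_0-v_1-\cdots-v_\ell$ where $v_{r+1}=s_r\cadet(v_r)$: if $s_r<m$ the consecutive-edge hyperplane $\{x_{v_r}-x_{v_{r+1}}=-s_r\}$ lies in $\mShi_m^n$ irrespective of label order, while if $s_r=m$ the corollary hypothesis forces $v_{r+1}<v_r$, which is exactly the orientation placing $\{x_{v_r}-x_{v_{r+1}}=-m\}$ in $\mShi_m^n$. Hence every consecutive cadet edge belongs to the $\mShi_m^n$-graph of the block, so the block is connected. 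For the $\mShi_m^n$-cadet condition, any non-marked cadet edge $\{i,j\}$ with $i=s\cadet(j)$ likewise has $\{x_i-x_j=s\}\in\mShi_m^n$ (automatically if $s<m$, and by the corollary hypothesis if $s=m$), so one may take $i'=i$, $j'=j$.

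For necessity, I would suppose $(T,\mu)\in\oTmn(\mShi_m^n)$ yet some labeled pair $i=m\cadet(j)$ has $i>j$, and split on whether $\{i,j\}\in\mu$. In the marked sub-case, the common block contains the consecutive cadet step $(v_k,v_{k+1})=(j,i)$ with $s_k=m$ and $v_k<v_{k+1}$, so $\{v_k,v_{k+1}\}$ is absent from the $\mShi_m^n$-graph; I then rule out any ``bypass'' edge $\{v_a,v_b\}$ with $a\le k<b$. Its drift-sum $\Sigma_{a,b}=\sum_{r=a}^{b-1}s_r\ge m$; equality forces $s_r=0$ for all $r\ne k$, and the $0$-cadet marking rule then yields $v_a<\cdots<v_k<v_{k+1}<\cdots<v_b$, so $v_a<v_b$ puts $\{x_{v_a}-x_{v_b}=-m\}$ out of $\mShi_m^n$; if instead $\Sigma_{a,b}>m$, the hyperplane is out of range in either orientation. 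Thus no $\mShi_m^n$-edge crosses the cut between $\{v_a:a\le k\}$ and $\{v_b:b>k\}$, contradicting $\mShi_m^n$-connectedness of the block. In the non-marked sub-case, since the only upward cadet edge from $i$ is the unmarked $\{i,j\}$, the block of $i$ extends only downward from $i$, so $\driftT(i')\ge\driftT(i)$ for every $i'\musim i$; symmetrically $\driftT(j')\le\driftT(j)$ for every $j'\musim j$. Hence $\driftT(i')-\driftT(j')\ge m$, with equality only when $(i',j')=(i,j)$; but then $\{x_i-x_j=m\}\in\mShi_m^n$ would require $i<j$, while otherwise the drift difference exceeds $m$ and no candidate hyperplane lies in $\mShi_m^n$. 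Either way the $\mShi_m^n$-cadet condition fails, a contradiction.

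The main obstacle will be the bypass analysis in the marked sub-case of necessity: one must combine the $0$-cadet marking rule (which forces $v_r<v_{r+1}$ whenever a marked step has $s_r=0$) with the violating inequality $v_k<v_{k+1}$ at the $s_k=m$ step, propagating an increasing label chain across the cut so as to exclude every edge whose drift-sum equals $m$ from the $\mShi_m^n$-graph.
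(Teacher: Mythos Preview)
Your approach is correct and essentially matches the paper's argument, with one small imprecision to fix. In the non-marked sub-case of necessity, your claim that $\driftT(i')-\driftT(j')=m$ forces $(i',j')=(i,j)$ is not quite right: if the block of $i$ contains marked $0$-cadet steps below $i$, or the block of $j$ contains marked $0$-cadet steps above $j$, there can be other pairs $(i',j')$ achieving equality. The repair is exactly the label-chain argument you already use in the marked sub-case: those marked $0$-cadet edges force increasing labels, so any such $i'$ satisfies $i'\geq i$ and any such $j'$ satisfies $j'\leq j$, whence $i'>j'$ and $\{x_{i'}-x_{j'}=m\}\notin\mShi_m^n$ after all.

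The paper does not prove this corollary directly; it deduces it from the more general Theorem~\ref{thm:Cat2Shi}, which handles all arrangements $\mB_{m,I}^n$ for an ideal $I\subseteq R_n$ (the Shi case being $I=R_n^+$). The proof of Theorem~\ref{thm:Cat2Shi} uses precisely the mechanism you identified---propagating label inequalities along marked $0$-cadet steps to compare the endpoints of a drift-$m$ bypass---and then invokes the ideal property to conclude. Your direct argument is simply the specialization of that proof to $I=R_n^+$, so the two routes coincide in substance; the paper's version just buys the interpolation between $\mA_{m-1}^n$, $\mShi_m^n$, and $\mA_m^n$ in one stroke.
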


In order to prove of Corollary~\ref{cor:Shi}, it suffices to show that the set of marked trees described there corresponds precisely to the set $\oTmn(\mShi_m^n)$. We will actually prove a more general result below, which interpolates between the case of the Catalan arrangement and the case of the Shi arrangement.

Observe that the Shi arrangement $\mShi_m^n$ satisfies $\mA_{m-1}^n\subseteq \mShi_m^n\subseteq \mA_m^n$. It is easy to see that, for all $m>0$, any arrangement $\mA$ such that $\mA_{m-1}^n\subseteq \mA \subseteq\mA_m^n$ is strongly transitive. We now describe a family of arrangements $\mA$ which interpolates between $\mA_{m-1}^n$ and $\mA_{m}^n$ and for which the set $\oTmn(\mA)$ admits a simple description.

Let $$R_n:=\{(i,j)\in [n]^2\mid i\neq j\}~\textrm{ and }~R_n^+:=\{(i,j)\in [n]^2\mid i<j\}.$$ 
For a subset $I\subseteq R_n$ we define the arrangement
$$\mB_{m,I}^n=\mA_{m-1}^n\cup\left(\bigcup_{(i,j)\in I}\{x_i-x_j=m\}\right).$$

We say that $I\subseteq R_n$ is an \emph{ideal} if the following holds for all $(i,j),(i',j')\in R_n$:\\
\centerline{if $(i,j)$ is in $I$ and $i'\leq i$ and $j'\geq j$, then $(i',j')$ is in $I$.}
Note that $\emptyset$, $R_n$ and $R_n^+$ are ideals, and that $\mA_{m-1}^n=\mB_{m,\emptyset}^n$, $\mA_{m}^n=\mB_{m,R_n}^n$, and $\mShi_{m}^n=\mB_{m,R_n^+}^n$.

\begin{thm}\label{thm:Cat2Shi} 
Let $m,n$ be positive integers. For any ideal $I\subseteq R_n$, the faces (of codimension $k$) of the arrangement $\mA=\mB_{m,I}^n$ are in bijection, via the bijection $\bPhi_\mA$, with the set of marked trees $(T,\mu)\in \oTmn$ (with $k$ marked edges) such that if $i=m\child(j)$ for some nodes $i,j\in [n]$ of $T$, then $(i,j)$ is in $I$.
\end{thm}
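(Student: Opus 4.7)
The plan is to apply Theorem~\ref{thm:main} to $\mA := \mB_{m,I}^n$ after checking that $\mA$ is strongly transitive and identifying $\oTmn(\mA)$ with the set $\mathcal{X}$ of marked trees $(T,\mu) \in \oTmn$ such that $i = m\child(j)$ implies $(i,j) \in I$. Strong transitivity follows from the inclusion $\mA \supseteq \mA_{m-1}^n$: every hyperplane $\{x_i - x_j = s\}$ with $s \geq 0$ that is missing from $\mA$ must have $s \geq m$, so if both $\{x_i - x_j = s\}$ and $\{x_j - x_k = t\}$ are missing with $s,t \geq 0$, then $s + t \geq 2m > m$ lies outside the shift range of $\mA$, forcing $\{x_i - x_k = s+t\} \notin \mA$.

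For the inclusion $\mathcal{X} \subseteq \oTmn(\mA)$, I would observe that every cadet edge $\{i,j\}$ of $T$ with $i = s\cadet(j)$ satisfies $\driftT(i) - \driftT(j) = s \in [0,m]$, and that the corresponding hyperplane $\{x_i - x_j = s\}$ lies in $\mA$: either because $s \leq m-1$, giving $\{x_i - x_j = s\} \in \mA_{m-1}^n \subseteq \mA$, or because $s = m$, in which case $i = m\child(j)$ and the $\mathcal{X}$-condition gives $(i,j) \in I$. This yields both the $\mA$-connectedness of every block (each block being a path of cadet edges) and the $\mA$-cadet condition (take $i' = i$, $j' = j$) in one shot.

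The reverse inclusion is the core of the proof. Given $(T,\mu) \in \oTmn(\mA)$ with $i = m\child(j)$, I would argue $(i,j) \in I$ as follows. Since $m$ is the largest child index, $i = m\cadet(j)$, so $\{i,j\}$ is a cadet edge. Each block of $(T,\mu)$ is a descending chain of cadet children connected by marked edges, along which drifts are weakly increasing. When $\{i,j\} \notin \mu$, the block $B_i$ extends downward from $i$ and $B_j$ extends upward from $j$, giving $\driftT(i') - \driftT(j') \geq \driftT(i) - \driftT(j) = m$ for every $i' \musim i$ and $j' \musim j$; the $\mA$-cadet condition then supplies such a pair with $\{x_{i'} - x_{j'} = \driftT(i') - \driftT(j')\} \in \mA$. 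When $\{i,j\} \in \mu$, both endpoints lie in a common block $B$ crossing an $m$-step edge, and $\mA$-connectedness forces the graph $G$ on $B$ to contain some edge $\{w_a, w_b\}$ crossing this $m$-step, giving a similar pair with $w_a$ on the $j$-side and $w_b$ on the $i$-side, drift difference $\geq m$, and $\{x_{w_b} - x_{w_a} = \driftT(w_b) - \driftT(w_a)\} \in \mA$.

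In both situations, $\mA$ has no hyperplanes of shift strictly greater than $m$, so the drift difference must equal $m$ exactly. This forces every step along the $0$-cadet chains from $i$ to $i'$ and from $j$ to $j'$ (respectively within $B$ flanking the $m$-step) to be $0$, and forces $(i',j') \in I$ (respectively $(w_b, w_a) \in I$). The final step uses the definitional constraint on marked trees that every marked $0$-cadet edge $\{j, 0\cadet(j)\}$ satisfies $j < 0\cadet(j)$: labels strictly increase along the downward $0$-step chain from $i$ to $i'$, giving $i \leq i'$, and strictly decrease along the upward $0$-step chain from $j$ to $j'$, giving $j' \leq j$ (the analogous inequalities $i \leq w_b$, $w_a \leq j$ hold in the marked case). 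The ideal property of $I$ then promotes $(i',j') \in I$ (or $(w_b, w_a) \in I$) to $(i,j) \in I$, completing the identification $\oTmn(\mA) = \mathcal{X}$. I expect the main obstacle to be the marked case, where one must extract an appropriate chord edge of $G$ from $\mA$-connectedness and carefully track the label inequalities flanking the $m$-step.
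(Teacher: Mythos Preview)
Your proposal is correct and follows essentially the same approach as the paper: apply Theorem~\ref{thm:main} after verifying strong transitivity, then identify $\oTmn(\mA)$ by exploiting that the only cadet-edge hyperplanes possibly missing from $\mA$ are those with $s=m$, forcing any witnessing pair $(i',j')$ (from either the $\mA$-cadet condition or a crossing edge in the block graph) to have drift difference exactly $m$, hence to lie along $0$-cadet chains where the marked-tree label constraint gives $i\leq i'$ and $j'\leq j$, and then invoking the ideal property. The only organizational difference is that the paper characterizes $\mA$-connectedness and the $\mA$-cadet condition separately (each as an ``if and only if'' over $m$-step edges), whereas you argue the two set inclusions $\mathcal{X}\subseteq\oTmn(\mA)$ and $\oTmn(\mA)\subseteq\mathcal{X}$ directly.
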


\begin{example}
The arrangement in Figure~\ref{fig:exp-face-bijection} is $\mB_{1,I}^3$, where $I=\{(1,2),(1,3)\}$ (which is an ideal). The trees indexing the faces of this arrangement are those in $\oT_3^1$ for which the $i\neq 1\child(j)$ unless $(i,j)\in \{(1,2),(1,3)\}$. 
\end{example}

Note that Theorem~\ref{thm:Cat2Shi} generalizes both Corollary~\ref{cor:Cat} (which corresponds to $I=R_n$) and Corollary~\ref{cor:Shi} (which corresponds to $I=R_n^+$).

\begin{proof}
We apply Theorem~\ref{thm:main} to the strongly transitive arrangement $\mA=\mB_{m,I}^n$, and need to check that $\oTmn(\mA)$ is the set of marked trees $(T,\mu)$ in $\oTmn$ such that if $i=m\child(j)$ for some nodes $i,j\in [n]$ of $T$, then $(i,j)$ is in $I$. Before starting this proof, let us observe that for any marked tree $(T,\mu)\in\oTmn$ and any nodes $i,j$ such that $i=s\cadet(j)$ the hyperplane
$\{x_i-x_j=\driftT(i)-\driftT(j)\}=\{x_i-x_j=s\}$ is in $\mA$ unless $s=m$ and $(i,j)\notin I$. 

Now we will determine under which conditions a marked tree $(T,\mu)\in\oTmn$ is $\mA$-connected.
 Let $B$ be a block of $(T,\mu)$, and let $G$ be the graph with vertex set $B$ and edge set 
$$E=\{\{i,j\}\mid i,j\in B~\textrm{such that}~\{x_i-x_j=\driftT(i)-\driftT(j)\}\in \mA\}.$$
Recall that $B$ is of the form $B=\{i_1,\ldots,i_\ell\}$, where for all $k\in [\ell-1]$, $i_{k+1}=s_k\cadet(i_k)$ for some $s_k\leq m$. By the above observation, the edge $\{i_k,i_{k+1}\}$ is in $E$ whenever $s_k<m$. 
Hence the graph $G$ is connected if and only if for all $k\in[\ell-1]$ such that $i_{k+1}=m\cadet(i_k)$ there exist $k'\leq k$ and $k''\geq k+1$ such that $\driftT(i_{k'})=\driftT(i_k)$ and $\driftT(i_{k''})=\driftT(i_{k+1})$ (so that $\driftT(i_{k''})-\driftT(i_{k'})=m$) and $(i_{k''},i_{k'})\in I$ (so that $\{i_{k''},i_{k'}\}\in E$). 
Moreover, $\driftT(i_{k'})=\driftT(i_k)$ and $k'\leq k$ imply $i_{k'}\leq i_k$ (since $(T,\mu)\in \oTmn$) and similarly $\driftT(i_{k''})=\driftT(i_{k+1})$ and $k''\geq k+1$ imply $i_{k''}\geq i_{k+1}$. Hence, there exists $k'$ and $k''$ satisfying the above conditions if and only if $(i_{k+1},i_{k})\in I$ (because $I$ is an ideal). 
This shows that a marked tree $(T,\mu)\in\oTmn$ is $\mA$-connected if and only if $(i,j)$ is in~$I$ for every marked edge $\{i,j\}$ such that $i=m\child(j)$.
 
A similar reasoning shows that $(T,\mu)$ satisfies the $\mA$-cadet condition if and only if $(i,j)$ is in~$I$ for every non-marked edge $\{i,j\}$ such that $i=m\child(j)$.
\end{proof}

\begin{rk}
It is easy to see that there exist ideals 
$$\emptyset =I_0\subset I_1\subset I_2\cdots \subset I_{{n \choose 2}}=R_n^+ \subset \cdots\subset I_{n(n-1)}=R_n,$$
such that $|I_j\setminus I_{j-1}|=1$ for all $i\in [n(n-1)]$. Hence, upon denoting $\mB_p=\mB_{m,I_p}^n$ for all $p\in[0;n(n-1)]$, one gets a sequence of strongly transitive arrangements $\mB_0,\ldots,\mB_{n(n-1)}$ for which Theorem~\ref{thm:Cat2Shi} applies and such that 
$$\mB_{0}=\mA_{m-1}^n,\quad \quad \mB_{{n \choose 2}}=\mShi_m^n, \quad \quad \mB_{n(n-1)}=\mA_m^n,$$
and for all $p\in[n(n-1)]$, $\mB_{p}$ is obtained from $\mB_{p-1}$ by adding a single hyperplane.
\end{rk}


\subsection{Multi-Catalan arrangements}
Let $n$ be a positive integer. Given a $n$-tuple of integers $\mm=(m_1,\ldots,m_n)\in \NN^n$, we define the \emph{$\mm$-Catalan arrangement} as
$$\mA_\mm:=\bigcup_{\substack{1\leq i<j\leq n\\ s\in [-m_i;m_j]}}\{x_i-x_j=s\}.$$

It is easy to see that the arrangement $\mA_\mm$ is strongly transitive for all $\mm\in \NN^n$.
We now want to describe the set of trees indexing its faces via the bijection $\Phi_{\mA_\mm}$. Let $m=\max(m_i\mid i\in[n])$.
Given a marked tree $(T,\mu)\in \oTmn$, we define the \emph{$\mm$-reach} of a node $j\in[n]$ of $T$ as
$$r_\mm(j):=\max(m_{k}+\driftT(k)-\driftT(j)\mid k\musim j\textrm{ and }k \textrm{ ancestor of }j),$$
where the node $j$ is considered an ancestor of itself. 

\begin{prop}
Let $n$ be a positive integer, let $\mm=(m_1,\ldots,m_n)\in \NN^n$, and let $m=\max(m_i\mid i\in[n])$. 
The faces of the $\mm$-Catalan arrangement $\mA_\mm$ are in bijection (via the bijection $\Phi_{\mA_\mm}$) with the set of marked trees $(T,\mu)\in \oTmn$ such that for every node $j\in[n]$, the vertex $s\child(j)$ is a leaf for all $s>r_\mm(j)$.
\end{prop}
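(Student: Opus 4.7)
The plan is to apply Theorem~\ref{thm:main} to $\mA=\mA_\mm$ and then identify $\oTmn(\mA_\mm)$ with the set described in the statement. That $\mA_\mm$ is strongly transitive is immediate from the definitions: if $s,t\geq 0$ satisfy $\{x_i-x_j=s\}\notin\mA_\mm$ and $\{x_j-x_k=t\}\notin\mA_\mm$ with $i,j,k$ distinct, then by the basic observation below one has $s>m_j$ and $t>m_k$, so $s+t>m_k$ and $\{x_i-x_k=s+t\}\notin\mA_\mm$. Theorem~\ref{thm:main} thus gives a bijection between $\oTmn(\mA_\mm)$ and $\mF(\mA_\mm)$, and it remains to characterize $\oTmn(\mA_\mm)$. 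Throughout I write $d_i:=\driftT(i)$, and use the following basic observation: for distinct $a,b\in[n]$, the hyperplane $\{x_a-x_b=s\}$ lies in $\mA_\mm$ iff $-m_a\leq s\leq m_b$; in particular, when $b$ is an ancestor of $a$ in $T$ (so $d_a\geq d_b$), the hyperplane $\{x_a-x_b=d_a-d_b\}$ is in $\mA_\mm$ iff $d_a-d_b\leq m_b$.

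The first step is to translate the $\mA_\mm$-connectedness of a block. Let $B=\{i_1,\ldots,i_\ell\}$ be a block of $(T,\mu)$, with $i_{k+1}=s_k\cadet(i_k)$ for $k<\ell$, and let $G_B$ be the graph with edges $\{i_p,i_q\}$ (for $p<q$) such that $d_{i_q}-d_{i_p}\leq m_{i_p}$. I would prove that $G_B$ is connected iff for every $k\in[\ell-1]$ there exists $p\leq k$ with $\{i_p,i_{k+1}\}\in G_B$, and that the latter is equivalent to $s_k\leq r_\mm(i_k)$ (using $d_{i_{k+1}}=d_{i_k}+s_k$). The ``if'' direction is an easy induction: the condition at step $k$ attaches $i_{k+1}$ to the component of $i_1$. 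For the ``only if'' direction, connectedness furnishes an edge $\{i_p,i_q\}\in G_B$ crossing the cut $\{i_1,\ldots,i_k\}\sqcup\{i_{k+1},\ldots,i_\ell\}$; since drifts are non-decreasing along the block, $d_{i_q}\geq d_{i_{k+1}}$, so $d_{i_{k+1}}-d_{i_p}\leq d_{i_q}-d_{i_p}\leq m_{i_p}$, and hence $\{i_p,i_{k+1}\}\in G_B$.

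Next I would handle the $\mA_\mm$-cadet condition for a non-marked cadet edge $\{i,j\}$ with $i=s\cadet(j)$. Since this edge is non-marked, $j$ is the deepest vertex of its block $B_j=\{i_1,\ldots,i_\ell=j\}$, so every $j'\musim j$ is an ancestor of $j$ in $T$ (some $i_p$, with $d_{j'}\leq d_j$), while every $i'\musim i$ is a descendant of $i$ in $T$ (so $d_{i'}\geq d_i$). Hence $d_{i'}-d_{j'}\geq d_i-d_j=s\geq 0$, and the membership $\{x_{i'}-x_{j'}=d_{i'}-d_{j'}\}\in\mA_\mm$ reduces to $d_{i'}-d_{j'}\leq m_{j'}$. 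This condition is tightest at $i'=i$, after which optimizing over $j'=i_p$ requires $s\leq m_{i_p}+d_{i_p}-d_j$ for some $p$, that is, $s\leq r_\mm(j)$. (The degenerate subcase $s=0$ is automatic since $r_\mm(j)\geq m_j\geq 0$, so the ``$s=0$ and $i<j$'' alternative in the cadet condition does not need separate treatment.)

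Combining the two analyses finishes the argument. The proposition's condition ``$s\child(j)$ is a leaf for all $s>r_\mm(j)$'' is equivalent to asking that whenever $j$ has a cadet at position $s$ one has $s\leq r_\mm(j)$, since the cadet position is by definition the largest $s$ such that $s\child(j)$ is not a leaf. Applied to marked cadet edges (the internal edges of the blocks), this is exactly the $\mA_\mm$-connectedness of the blocks obtained in the first step; applied to non-marked cadet edges, it is exactly the $\mA_\mm$-cadet condition obtained in the second step. This identifies $\oTmn(\mA_\mm)$ with the described set of marked trees. The main technical point I expect to require care is the ``only if'' direction of the block-connectedness equivalence; once that drift-monotonicity argument is in place, the rest is bookkeeping.
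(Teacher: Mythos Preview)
Your proposal is correct and follows essentially the same approach as the paper: apply Theorem~\ref{thm:main}, then identify $\oTmn(\mA_\mm)$ by reducing both the $\mA_\mm$-connectedness of blocks and the $\mA_\mm$-cadet condition to the single inequality $s\leq r_\mm(j)$ for each cadet edge. Your write-up is in fact more careful than the paper's in two spots: you verify strong transitivity explicitly, and your cut-plus-drift-monotonicity argument for the ``only if'' direction of block connectedness makes precise a step the paper leaves as ``it is easy to see''.
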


\begin{proof}
We apply Theorem~\ref{thm:main} to the strongly transitive arrangement $\mA_\mm\subseteq \mA_m^n$. Let $\mT_\mm$ be the set of marked trees in $\oTmn$ such that for every node $j\in[n]$, the vertex $s\child(j)$ is a leaf for all $s>r_\mm(j)$.
We need to check that $\oTmn(\mA_\mm)=\mT_\mm$.  
Before starting this proof, let us observe that for any marked tree $(T,\mu)\in\oTmn$ and any nodes $i,j\in [n]$ such that $i$ is a descendant of $j$, the hyperplane 
$\{x_i-x_j=\driftT(i)-\driftT(j)\}$ is in $\mA_\mm$ if and only if $\driftT(i) \leq m_j+\driftT(j)$.

Now we will determine under which conditions a marked tree $(T,\mu)\in\oTmn$ is $\mA_\mm$-connected.
 Let $B$ be a block of $(T,\mu)$, and let $G$ be the graph with vertex set $B$ and edge set 
$$E=\{\{i,j\}\mid i,j\in B~\textrm{such that}~\{x_i-x_j=\driftT(i)-\driftT(j)\}\in \mA_\mm\}.$$
Recall that $B$ is of the form $B=\{i_1,\ldots,i_\ell\}$, where for all $j\in [\ell-1]$, $i_{j+1}=s_j\cadet(i_{j})$ for some $s_j\leq m$. Observe also that $G$ is connected if and only if for all for all $j\in [\ell-1]$, the vertex $i_{j+1}$ is connected (by a path) to a vertex $i_{k}$ for some $k\leq j$. By the above observation, it is easy to see that this happens if and only if $s_j\leq r_{\mm}(j)$. This shows that a marked tree $(T,\mu)\in\oTmn$ is $\mA_\mm$-connected if and only if $s\leq r_{\mm}(j)$ for every marked cadet edge $\{i,j\}$ such that $i=s\cadet(j)$.

A similar reasoning shows that a tree $(T,\mu)\in\oTmn$ satisfies the $\mA_\mm$-cadet condition if and only if $s\leq r_{\mm}(j)$ for every non-marked cadet edge $\{i,j\}$ such that $i=s\cadet(j)$.
%
\end{proof}


\section{Generating function for faces of symmetric transitive arrangements}\label{sec:GF}
In this section we focus on \emph{symmetric} braid-type arrangements (see definition below). Examples of strongly transitive symmetric  braid-type arrangements include the $m$-Catalan arrangement $\mA_m^n$, and the \emph{$m$-semiorder arrangement} $\mA_{[-m;m]\setminus\{0\}}^n$. We first treat these two families of arrangements and characterize the generating functions of their faces. We then turn to the general situation, and give a general method for characterizing the generating function of faces for any strongly transitive symmetric arrangement. 

An arrangement $\mA\subset \RR^n$ is \emph{symmetric} if $\pi(\mA)=\mA$ for all $\pi\in \fS_n$. It is easy to see that the symmetric braid-type arrangements are the arrangements of the form $\mA_S^n$ for a set $S\subseteq \ZZ$ such that $S=-S$ (where $-S:=\{-s\mid s\in S\}$).
In view of Lemma~\ref{lem:transitive-sets}, one gets the following.
\begin{lemma} \label{lem:condition-sym-trans}
The strongly transitive symmetric arrangements in dimension $n\geq 3$ are precisely the arrangements of the form $\mA_S^n$, where the set $S\subseteq\ZZ $ satisfies 
\begin{equation}\label{eq:condition-sym-trans}
S=-S\textrm{ and }~ \forall s,t\in \NN\setminus S, \quad s+t\notin S.
\end{equation}
\end{lemma}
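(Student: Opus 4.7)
The plan is to apply Lemma~\ref{lem:transitive-sets} and then simplify the resulting condition (iii) under the symmetry assumption $S=-S$. First I would observe that, as noted just before the lemma, an arrangement is symmetric and of braid-type iff it has the form $\mA_S^n$ with $S=-S$ (since permuting coordinates permutes the hyperplanes $\{x_i-x_j=s\}$ among themselves exactly when $S$ is closed under negation). So the task reduces to proving that, for a set $S\subseteq \ZZ$ with $S=-S$, condition (iii) of Lemma~\ref{lem:transitive-sets} is equivalent to condition~\eqref{eq:condition-sym-trans}, i.e.\ to the single requirement that $s+t\notin S$ whenever $s,t\in\NN\setminus S$.

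One direction is immediate: if $S$ satisfies condition (iii), then taking any $s,t\in\NN\setminus S$ gives $st\geq 0$, so (iii) forces $s+t\notin S$. For the other direction, assume $S=-S$ and the simplified hypothesis, and check the four sub-cases of (iii) in turn. If $s,t\geq 0$ and $s,t\notin S$, then $s,t\in\NN\setminus S$ and the hypothesis gives $s+t\notin S$ directly. If $s,t\leq 0$ and $s,t\notin S$, then $-s,-t\in \NN\setminus S$ (using $S=-S$), so $-s-t\notin S$, and again by $S=-S$ we conclude $s+t\notin S$. For the two mixed-sign cases, say $s\geq 0 \geq t$, one has $s,-t\in\NN\setminus S$, so $s-t\notin S$, and then $t-s=-(s-t)\notin S$ because $S=-S$; the case $s\leq 0\leq t$ is symmetric. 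This establishes condition (iii).

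Finally, by Lemma~\ref{lem:transitive-sets}, condition (iii) is equivalent to $\mA_S^n$ being strongly transitive for every $n>0$, and also equivalent to strong transitivity for any single $n\geq 3$. Combining these equivalences with the reduction above yields the claim. The proof involves no real obstacle: it is essentially a bookkeeping exercise that uses $S=-S$ to collapse the asymmetric cases of Lemma~\ref{lem:transitive-sets}(iii) into the single non-negative case, and the only mild care needed is to remember to apply $S=-S$ both when moving hypotheses into $\NN$ and when moving the resulting conclusion back out.
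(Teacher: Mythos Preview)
Your proposal is correct and follows exactly the approach the paper intends: the paper merely writes ``In view of Lemma~\ref{lem:transitive-sets}, one gets the following'' and leaves the details implicit, while you spell out precisely how the symmetry $S=-S$ collapses condition~(iii) of Lemma~\ref{lem:transitive-sets} to the single non-negative case in~\eqref{eq:condition-sym-trans}. There is nothing to add.
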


Given a finite set $S\subseteq \NN$, we define the \emph{face generating function} of the arrangements $\mA_S^n$ as 
$$F_{S}(x,u):=\sum_{n=0}^\infty \sum_{k=0}^{n}c_{n,k}\, u^k \frac{x^n}{n!},$$
where $c_{n,k}$ is the number of faces of dimension $k$ of $\mA_{S}^n$. 

\begin{rk}
The generating function of regions of the arrangements $\mA_S^n$ is $R_S(x):=\sum_{n=0}^\infty|\mR(\mA_S^n)|\frac{x^n}{n!}$. It corresponds to the specialization of $F_S(ux,1/u)$ at $u=0$. An equation characterizing $R_S(x)$ was given in \cite[Proposition 6.6]{OB} for any set $S$ such that the arrangements $\mA_S^n$ are transitive (see also \cite[Eq. (6.11)]{OB} for the case of a transitive set $S$ satisfying $S=-S$).
\end{rk}

Now we suppose that the set $S\subset \ZZ$ satisfies~\eqref{eq:condition-sym-trans}, so that $\mA_S^n$ is symmetric and strongly transitive for all $n\geq 0$. Let $m=\max(S)$, and let
$$\oT(S):= \bigcup_{n=0}^\infty \oTmn(\mA_S^n).$$
Applying Theorem~\ref{thm:main} gives the following result.
\begin{cor}\label{cor:thmGF}
Let $S\subseteq \ZZ$ be a finite set satisfying~\eqref{eq:condition-sym-trans}. The face generating function $F_S$ is given by
$$F_S(x,u)=\sum_{(T,\mu)\in \oT(S)}u^{\#\blocks(\mu)}\frac{x^{|T|}}{|T|!},$$
where $|T|$ is the number of nodes of the tree $T$, and $\#\blocks(\mu)$ is the number of blocks of the partition $\blocks(\mu)$.
\end{cor}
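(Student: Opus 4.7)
The plan is to apply Theorem~\ref{thm:main} directly, after reconciling the three ``statistics'' involved: the dimension of a face, the number of marked edges of the corresponding tree, and the number of blocks of the marking. The only non-trivial point is the last identification.

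First I would observe that by Theorem~\ref{thm:main}, for each $n\geq 0$ the map $\bPhi_{\mA_S^n}$ is a bijection between $\oTmn(\mA_S^n)$ and $\mF(\mA_S^n)$, and moreover it sends a marked tree $(T,\mu)$ to a face of codimension $|\mu|$ in $\RR^n$, hence of dimension $n-|\mu|$. Since the sets $\oTmn(\mA_S^n)$ are disjoint for distinct values of $n$ (the underlying tree has $n$ labeled nodes) and their union is exactly $\oT(S)$, we may index the double sum defining $F_S(x,u)$ by pairs $(T,\mu)\in\oT(S)$, with $|T|=n$ and contribution $u^{n-|\mu|}\,x^{|T|}/|T|!$.

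Second I would verify the key identity
\[
\#\blocks(\mu)=|T|-|\mu|
\]
for every marked tree $(T,\mu)\in\oT(S)$. By Definition, two nodes lie in the same block iff the path between them in $T$ consists entirely of marked edges, so the blocks are precisely the vertex sets of the connected components of the subgraph $(V(T),\mu)$. As noted in the paragraph following the definition of marked trees, the marked edges of a tree in $\oTmn$ form a vertex-disjoint union of paths; in particular this subgraph is a forest on $|T|$ vertices with $|\mu|$ edges, whose number of connected components is exactly $|T|-|\mu|$. Hence $\#\blocks(\mu)=|T|-|\mu|$, which is precisely the dimension of the associated face $\bPhi_{\mA_S^n}(T,\mu)$.

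Combining the two steps, for each $(T,\mu)\in\oT(S)$ the exponent of $u$ attached to the face $\bPhi_{\mA_S^n}(T,\mu)$ in $F_S(x,u)$ is $n-|\mu|=\#\blocks(\mu)$, which gives the claimed formula. I do not anticipate a real obstacle here: the proof is a bookkeeping consequence of Theorem~\ref{thm:main} together with the elementary forest identity above; the only thing one must be careful about is the distinction between dimension and codimension (the statement of Theorem~\ref{thm:main} is phrased in terms of codimension $|\mu|$, while the generating function $F_S$ is phrased in terms of dimension $k$).
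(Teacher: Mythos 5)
Your proof is correct and follows the approach the paper intends (the paper gives no displayed proof, only the remark "Applying Theorem~\ref{thm:main} gives the following result"). You correctly supply the one genuinely needed observation: the blocks of $(T,\mu)$ are the connected components of the spanning forest $([n],\mu)\subseteq T$, so $\#\blocks(\mu)=|T|-|\mu|$, which by Theorem~\ref{thm:main} is the dimension of the face $\bPhi_{\mA_S^n}(T,\mu)$, matching the exponent of $u$ in the definition of $F_S$.
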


For a marked tree $(T,\mu)\in\oT(S)$ with at least one node, we call \emph{root-block} the block of the partition $\blocks(\mu)$ containing the root vertex. Below we will characterize the generating function $F_S(x,u)$ by considering the recursive decomposition of trees in $\oT(S)$ ``at the root-block''.


\subsection{Generating functions of Catalan and semiorder arrangements}
For the $m$-Catalan arrangement, Theorem~\ref{thm:main} states that the faces of $\mA_m^n$ of dimension $k$ are in bijection with the marked trees in $\oTmn$ having $k$ blocks. This result was first established by Levear \cite{Levear}, and allows one to recover a counting result due to Athanasiadis \cite{Athanasiadis:finite-field-method}. 
Below we show how to obtain the face generating function of the $m$-Catalan arrangement. 
\begin{prop}[\cite{Levear}]\label{prop:GFCatalan}
The generating function $G\equiv F_{[-m;m]}(x,u)$ counting the faces of the $m$-Catalan arrangements is characterized by the following equation: 
$$G=1+u\,\Om(\XX ,F),~\textrm{ where }~\Om(X,Y)=\frac{XY^{m+1}}{1-XY\frac{1-Y^m}{1-Y}}.$$
This gives the equivalent equation:
$$G=e^{-x}+(1+u)(1-e^{-x})G^{m+1}.$$
\end{prop}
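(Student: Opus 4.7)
The plan is to apply Corollary~\ref{cor:thmGF} together with the identity $\oT([-m;m]) = \bigcup_n \oTmn$ from Corollary~\ref{cor:Cat} to write $G = \sum_{(T,\mu)} u^{\#\blocks(\mu)} x^{|T|}/|T|!$, and then to decompose each nonempty marked tree at its \emph{root-block}, namely the $\musim$-class of the root. This root-block is a path $v_0, v_1, \ldots, v_{k-1}$ of $k \geq 1$ nodes joined by $k-1$ marked cadet edges; it contributes one factor of $u$, and every child slot of a $v_i$ that is not occupied by the next block node is decorated by an arbitrary (possibly empty) marked tree and so contributes a factor $G$.

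For the local structure, write $v_{i+1} = s_i\cadet(v_i)$ with $s_i \in [0;m]$ for $i < k-1$. The cadet condition forces slots $s_i+1,\ldots,m$ of $v_i$ to be leaves while slots $0,\ldots,s_i-1$ are freely decorated (contributing $G^{s_i}$); the terminal node $v_{k-1}$ has all $m+1$ slots free (contributing $G^{m+1}$). The subtle point is the definition of a marked tree: whenever $s_i = 0$ we must have $v_i < v_{i+1}$, coupling the labels of consecutive block nodes joined by $s=0$ edges.

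To handle this I will regroup the block into maximal \emph{super-nodes}, where a super-node is a run $v_j < v_{j+1} < \cdots < v_{j+\ell}$ of block nodes joined by $s=0$ edges. Since a nonempty finite set of labels admits exactly one strictly increasing ordering, the exponential generating function of super-nodes is $\XX$. A root-block then decomposes uniquely as a sequence of super-nodes separated by edges with $s \in [1;m]$: each non-final super-node ends in a node that carries $s$ free slots, giving a factor $\XX\cdot(G + G^2 + \cdots + G^m) = \XX\,G(1-G^m)/(1-G)$ per intermediate (super-node, edge) pair, and the final super-node ends in $v_{k-1}$, contributing $\XX\,G^{m+1}$. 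Summing over the number of intermediate pairs yields
\[
\Om(\XX, G) \;=\; \XX\,G^{m+1}\sum_{r \geq 0}\left(\XX\,G\,\frac{1-G^m}{1-G}\right)^{\!r} \;=\; \frac{\XX\,G^{m+1}}{1 - \XX\,G\,\frac{1-G^m}{1-G}},
\]
which together with the empty-tree contribution $1$ gives $G = 1 + u\,\Om(\XX, G)$.

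Finally, to derive the equivalent polynomial form, I will clear the denominator in $G - 1 = u\,\Om(\XX, G)$: cross-multiplying and multiplying by $1 - G$ gives $-(1-G)^2 + (1-G)\XX\,G(1-G^m) = u\,\XX\,G^{m+1}(1-G)$; dividing by $1-G$, collecting terms, and using $1 + \XX = e^x$ reduces this to $Ge^x - 1 = (1+u)(e^x-1)G^{m+1}$, which is the stated equation. The main obstacle is the super-node regrouping step, which is precisely what converts the per-node variable $x$ into the set variable $\XX$; once it is in place, the remaining computation is routine.
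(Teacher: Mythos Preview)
Your proof is correct and follows essentially the same approach as the paper's own argument: your ``super-nodes'' are exactly what the paper calls the runs $v_{i,1},v_{i,2},\ldots,v_{i,k_i}$ of block nodes joined by $0\cadet$ edges with increasing labels, and the resulting decomposition (each run contributing a factor $\XX$, intermediate transitions contributing $G+G^2+\cdots+G^m$, and the terminal node contributing $G^{m+1}$) matches the paper verbatim. Your derivation of the equivalent polynomial equation is also correct.
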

Here and below, $e^{x}$ has to be understood as the formal power series $\sum_{n=0}^\infty \frac{x^n}{n!}\in\QQ[[x]]$.
\begin{proof}
Let $\oT_m:=\oT([-m;m])$. 
We consider the decomposition of the trees in $\oT_m$ at the root-block depicted in Figure~\ref{fig:root-block-decomposition}(a).
The root-block of a tree $(T,\mu)\in\oT_m$ with at least one node
can be any sequence of nodes 
$$v_{1,1},v_{1,2}\ldots,v_{1,k_1},\, v_{2,1},v_{2,2},\ldots,v_{2,k_2},\,\ldots\,,v_{r,1},v_{r,2},\ldots,v_{r,k_r},$$
such that $r\geq 1$, $k_1,\ldots,k_r\geq 1$, $v_{1,1}$ is the root, and 
\bitem 
\item for all $i\in[r]$ and $j\in [k_i-1]$, $v_{i,j+1}=0\cadet(v_{i,j})$ and $v_{i,j+1}>v_{i,j}$,
\item for all $i\in [r-1]$, $v_{i+1,1}=s_i\cadet(v_{i,k_i})$ for some $s_i\in[m]$.
\eitem
\fig{width=\linewidth}{root-block-decomposition}{(a) Decomposition of a marked tree in $\oT_m=\oT([-m;m])$ at the root-block. (b) A partition of the set $\oT_m'=\oT([-m;m]\setminus\{0\})$.}
In this case, there are $\sum_{i=1}^{k-1}s_i$ subtrees rooted at nodes which are left siblings of the vertices in the root-block. There are also $m+1$ subtrees rooted at $v_{r,k_r}$, as represented in Figure~\ref{fig:root-block-decomposition}(a). This decomposition gives the equation
$$G=1+u\frac{(e^x-1)}{1-(e^x-1)\sum_{s=1}^m G^s}G^{m+1},$$
where the summand 1 corresponds to the tree with no node, the factor $u$ counts the root-block, the term $G^{m+1}$ corresponds to the subtrees attached to the node $v_{r,k_r}$, the terms $e^x-1$ correspond to the choice of the labels of vertices within each ``run'' $v_{i,1},v_{i,2},\ldots,v_{i,k_i}$, and the term $\sum_{s=1}^m G^s$ corresponds to the subtrees rooted at nodes which are left siblings of the vertices in the root-block (with choice of the parameters $s_1,\ldots,s_{k-1}$). 
\end{proof}

We now turn our attention to the ($m$-extended) semiorder arrangement $\mA:=\mA^n_{[m;m]\setminus \{0\}}$. The arrangement $\mA$ is strongly transitive.
The bijection $\Phi_\mA$ for this arrangement maps the faces of $\mA$ with the marked trees $(T,\mu)\in \mT_m^n$ such that 
\bitem 
\item if a block of $\blocks(\mu)$ is made of a sequence of nodes $i_1,\ldots,i_\ell$ with $\ell>1$ and $i_{k+1}=s_k\cadet(i_k)$ for all $k\in [\ell-1]$, then there is $k\in[\ell-1]$ such that $s_k>0$,
\item and if $i=0\cadet(j)$ and both $i$ and $j$ are alone in their respective blocks, then $i<j$.
\eitem
The bijection $\phi_A$ is illustrated in Figure~\ref{fig:exp-face-bijection-semiorder}.

\fig{width=\linewidth}{exp-face-bijection-semiorder}{The bijection $\bPhi_\mA$ for the semiorder arrangement $\mA\subseteq \mA_{\{-1,1\}}^3$.}

We will now characterize the face generating function of semiorder arrangements.
\begin{prop}\label{prop:GFsemiorder}
The generating function $H\equiv F_{[-m;m]\setminus \{0\}}(x,u)$ counting the faces of the $m$-extended semiorder arrangements is characterized by the following equation: 
$$H=1+(1-e^{-xu})H^{m+1}+u\,\wt\Om(\XX ,H),
~\textrm{ where }~\wt\Om(X,Y)=X^2\frac{Y^{m+2}-Y^{2m+2}}{1-Y-XY+XY^{m+1}}.$$
\end{prop}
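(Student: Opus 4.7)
The plan is to apply Corollary~\ref{cor:thmGF} and partition the set $\oT_m'=\oT([-m;m]\setminus\{0\})$ according to the size of the root-block $B_1$, following the template of Proposition~\ref{prop:GFCatalan}. Let $H_1$ and $H_{\geq 2}$ denote the EGFs of the trees in $\oT_m'$ whose root-block has size~$1$ and size~$\geq 2$, respectively, so that $H=1+H_1+H_{\geq 2}$. The goal is to establish
\[
H_{\geq 2}\;=\;u\,\wt\Om(\XX, H)\qquad\text{and}\qquad H_1\;=\;(1-e^{-xu})\,H^{m+1}.
\]

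For $H_{\geq 2}$ I reuse the root-block decomposition of Figure~\ref{fig:root-block-decomposition}(a): the root-block is a concatenation of $r$ ``runs'' of consecutive $0$-cadet marked edges joined by $r-1$ ``between-run'' $s_i$-cadet marked edges with $s_i\in[1,m]$, together with the $\sum_i s_i$ left-sibling subtrees and the $m+1$ subtrees at the last block-vertex $v_{r,k_r}$. Because the semiorder arrangement omits the hyperplanes $\{x_i-x_j=0\}$, the $\mA$-connectedness condition forces any block of size~$\geq 2$ to contain at least one non-zero step, i.e.\ $r\geq 2$. Restricting the Catalan computation to $r\geq 2$ and performing the same algebraic simplification as in Proposition~\ref{prop:GFCatalan} then yields $H_{\geq 2}=u\wt\Om(\XX, H)$. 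I also need to check that the only non-marked cadet edge emanating from the root-block—the edge from $v_{r,k_r}$ to its cadet, if any—satisfies the $\mA$-cadet condition automatically: when this cadet is at position $0$, the block-mate $v_{r-1,k_{r-1}}$ lies at drift differing by $s_{r-1}\in[1,m]$, which supplies the required block-drift witness.

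The main novelty, which I expect to be the principal obstacle, is the analysis of $H_1$. The key construction is a ``case-A chain'' from the root: the maximal sequence $r_0=\text{root},r_1,\ldots,r_\ell$ of vertices in singleton blocks such that, for each $i<\ell$, $r_i$ has only its $0$-child non-leaf and $r_{i+1}=0\cadet(r_i)$. Each edge $\{r_i,r_{i+1}\}$ is then a non-marked $0$-cadet edge whose two endpoints are singleton blocks at drift~$0$; the block-drift clause of the $\mA$-cadet condition fails, so one must have $r_0<r_1<\cdots<r_\ell$ in labels. A chain of $\ell+1$ increasingly-labelled vertices, each its own block, contributes $(xu)^{\ell+1}/(\ell+1)!$ to the EGF, and summing over $\ell\geq 0$ gives $e^{xu}-1$.

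At the chain's endpoint $r_\ell$, the end-structure is the $(m+1)$-tuple of subtrees $(T_0,\ldots,T_m)$. The chain terminates at $r_\ell$ precisely when we are \emph{not} in the pattern ``$T_0$ lies in the class counted by $H_1$ and $T_1=\cdots=T_m$ are empty'' (this is the pattern that would force the chain to continue to $0\cadet(r_\ell)$). In every other case the cadet condition at $r_\ell$'s own cadet edge is automatic—either because the cadet sits at position $\geq 1$, or because the root-block of $T_0$ has size $\geq 2$ and the block-drift clause applies as in the analysis of $H_{\geq 2}$. The end-structure EGF is therefore $H^{m+1}-H_1$, and the chain decomposition produces
\[
H_1 \;=\;(e^{xu}-1)\bigl(H^{m+1}-H_1\bigr),
\]
which solves to $H_1=(1-e^{-xu})H^{m+1}$. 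Adding the three contributions $1+H_1+H_{\geq 2}$ then yields the claimed equation.
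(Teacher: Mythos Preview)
Your argument is correct and follows essentially the same route as the paper. Both proofs first compute the generating function $\wt H=H_{\ge 2}$ for trees whose root-block has at least two vertices by restricting the Catalan root-block decomposition to $r\ge 2$ runs, and then treat the singleton-root-block case via a maximal chain of $0$-cadet singleton vertices. Your organization is slightly tidier: you decompose only the $H_1$-trees with the chain and obtain $H_1=(e^{xu}-1)(H^{m+1}-H_1)$ directly, whereas the paper runs the chain over all nonempty trees (letting it terminate either at a non-singleton $v_k$, contributing $e^{xu}\wt H$, or at a singleton $v_k$, contributing $(e^{xu}-1)(1+H(H^m-1))$) and then solves for $H$. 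These are algebraically equivalent.

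One small slip: the label inequality along the chain goes the other way. The $\mA$-cadet condition reads ``$i=s\cadet(j)$ with $s=0$ and $i<j$'', so for a non-marked $0$-cadet edge with both endpoints in singleton blocks one needs the \emph{child} label smaller than the \emph{parent} label, hence $r_0>r_1>\cdots>r_\ell$ (as the paper also states). This does not affect the EGF, since a totally ordered set of $\ell+1$ labels contributes $(xu)^{\ell+1}/(\ell+1)!$ either way.
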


\begin{proof}
Let $\oT_m':=\oT([-m;m]\setminus\{0\})$. 
We consider the decomposition of the trees in $\oT_m'$ at the root-block. The blocks of a tree in $(T,\mu)\in\oT_m'$ can be either reduced to a single node or made of an arbitrary sequence of vertices 
$$v_{1,1},v_{1,2}\ldots,v_{1,k_1},\, v_{2,1},v_{2,2},\ldots,v_{2,k_2},\,\ldots\,,v_{r,1},v_{r,2},\ldots,v_{r,k_r},$$
such that $r\geq 2$, $k_1,\ldots,k_r\geq 1$, $v_{1,1}$ is the root, and 
\bitem 
\item for all $i\in[r]$ and $j\in [k_i-1]$, $v_{i,j+1}=0\cadet(v_{i,j})$ and $v_{i,j+1}>v_{i,j}$,
\item for all $i\in [r-1]$, $v_{i+1,1}=s_i\cadet(v_{i,k_i})$ for some $s_i\in[m]$.
\eitem
Let $\oT_m''$ be the set of trees in $\oT_m'$ such that the root-block has several nodes, and let $\wt H=\sum_{(T,\mu)\in\oT_m''}u^{\#\blocks(\mu)}\frac{x^{|T|}}{|T|!}$.
 By reasoning as in the proof of Proposition~\ref{prop:GFCatalan}, one gets 
\begin{equation}\label{eq:eqHtilde}
\wt H=u\left(\frac{1}{1-(e^x-1)\sum_{s=1}^m H^s}-1\right)(e^x-1)H^{m+1}.
\end{equation}
Now, given a tree $(T,\mu)$ in $\oT_m'$ having at least one node we consider the maximal sequence of vertices $v_1,\ldots,v_k$ starting with the root $v_1$, such that $v_{i+1}=0\cadet(v_i)$ for all $i\in[k-1]$, and $v_1,\ldots,v_k$ are in different blocks (so $v_1,\ldots,v_{k-1}$ are alone in their respective blocks). This is represented in Figure~\ref{fig:root-block-decomposition}(b).

The set $\mH_1\subset \oT_m'$ of trees such that $v_k$ is not alone in its block contributes $H_1:=e^{xu}\wt H$ to the generating function $H$ (where the term $e^{xu}$ accounts for the vertices $v_1,\ldots,v_{k-1}$ whose labels must be decreasing). The set $\mH_2\subset \oT_m'$ of trees such that $v_k$ is alone in its block contributes $H_2:=(e^{xu}-1)(1+H(H^{m}-1))$, where the factor $(e^{xu}-1)$ accounts for the vertices $v_1,\ldots,v_k$, the term 1 accounts for the case where the children of $v_k$ are all leaves, while the term $H(H^{m}-1)$ corresponds to the case where $s\child(v_k)$ is a node for some $s>0$. 

This gives $H=1+H_1+H_2=e^{xu}+e^{xu}\wt H+(e^{xu}-1)H(H^{m}-1)$, hence
$$e^{xu}H=e^{xu}(1+\wt H)+(e^{xu}-1)H^{m+1}.$$ 
This together with~\eqref{eq:eqHtilde} prove the result. 
\end{proof}

\subsection{Generating functions of arbitrary symmetric transitive arrangements}

\begin{thm}\label{thm:GF}
Let $S\subset \ZZ$ be a finite set satisfying~\eqref{eq:condition-sym-trans}.
The face generating function $F_S(x,u)$ is characterized by a finite equation, which is computable from $S$. Precisely, if $0\in S$, then $F_S$ is \emph{determined by} (that is, is the unique formal power series solution of) a polynomial equation of the form $P(F_S(x,u),e^{x},u)=0$ for some trivariate polynomial $P$ with coefficients in $\QQ$ (hence $F_S(x,u)=\hat F(\XX,u)$ with $\hat F$ an algebraic series). If $0\notin S$ then $F_S$ is determined by a polynomial equation of the form $P(F_S(x,u),e^{xu},e^x,u)=0$ for some polynomial $P$ with coefficients in $\QQ$.
\end{thm}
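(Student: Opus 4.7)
The plan is to generalize the root-block decomposition used in the proofs of Propositions~\ref{prop:GFCatalan} and~\ref{prop:GFsemiorder}, and to combine it with a transfer-matrix argument to handle the $\mA_S^n$-connectedness and $\mA_S^n$-cadet conditions on an arbitrary block. Starting from Corollary~\ref{cor:thmGF}, I would write $F_S=1+R_S$, where $R_S$ enumerates the nonempty marked trees in $\oT(S)$. For each such tree I would isolate the root-block $v_1,\ldots,v_\ell$, with $v_{i+1}=s_i\cadet(v_i)$ and $s_i\in[0,m]$ for $m=\max S$. Stripping off the root-block leaves a family of subtrees, one at each lateral position $s<s_i$ of $v_i$ and one at each of the $m+1$ child positions of $v_\ell$; these are again trees of $\oT(S)$ and thus contribute factors $F_S$. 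The labels contribute factors built from $e^x$ for vertices belonging to the root-block, and from $e^{xu}$ for sequences of singleton blocks glued by non-marked $0$-cadet edges (which arise only when $0\notin S$).

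Next, I would analyze the allowable shapes $(s_1,\ldots,s_{\ell-1})\in[0,m]^{\ell-1}$ of a root-block. With drifts $d_i=\sum_{k<i}s_k$, the block is $\mA_S^n$-connected if and only if the graph $G$ on $\{v_1,\ldots,v_\ell\}$ with edges $\{v_i,v_j\}$ whenever $d_j-d_i\in S$ is connected; the $\mA_S^n$-cadet condition at $v_\ell$ is likewise a local condition involving $v_\ell$, the recent drifts inside the root-block, and the root of the attached subtree at the cadet position of $v_\ell$. The key claim is that the set of allowable shapes is recognized by a finite automaton over the alphabet $\{0,1,\ldots,m\}$. The automaton state, after reading $s_1,\ldots,s_i$, would record (i) which drifts in the window $[d_{i+1}-m,d_{i+1}]$ are populated, (ii) the partition of these populated drifts into connected components of $G$, and (iii) a flag that becomes ``invalid'' as soon as a component drops out of the window without having merged with that of the current vertex. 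Since $S\subseteq[-m,m]$, only this bounded information can affect the acceptance of future letters.

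I would then promote the automaton into a transfer matrix $M$ whose entries are polynomial in $F_S$, $e^x$, $u$ (and in $e^{xu}$ when $0\notin S$): the transition for letter $s_i$ carries a weight $F_S^{s_i}$ for the lateral subtrees of $v_i$ times the appropriate label factor. Summing over admissible shapes, $R_S$ becomes an entry of $(I-M)^{-1}$ composed with terminal weights encoding the $\mA_S^n$-cadet condition at $v_\ell$ together with the subtrees hanging at its $m+1$ child positions; the latter may require splitting $F_S$ into a bounded family of sub-series according to the local block structure at the root of an attached subtree, each sub-series satisfying an analogous polynomial equation. This exhibits $R_S$ as a rational function in the named variables; substituting into $F_S=1+R_S$ and clearing denominators yields a polynomial equation of the announced form. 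Uniqueness of $F_S$ as a formal power series solution follows because the recursive decomposition determines the coefficient of $x^n$ in $F_S$ from strictly smaller-degree coefficients.

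The main obstacle is the construction of the finite automaton, especially in the case $0\notin S$. Vertices sharing a drift need not be directly connected in $G$, so the state must carry the full partition of the populated drifts into connected components, and one must verify that this partition together with the window populations and the dropped-component flag forms a genuine Markovian state for the process. A secondary difficulty is dovetailing the $\mA_S^n$-cadet condition at $v_\ell$ with the recursive structure of the subtree rooted there, which is where the above-mentioned refinement of $F_S$ into a bounded family of sub-series becomes necessary. Once these ingredients are in place, the passage from a rational expression for $R_S$ to a polynomial equation for $F_S$ is routine regular-language generating-function bookkeeping.
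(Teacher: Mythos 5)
Your proposal follows essentially the same route as the paper: decompose at the root-block, refine $F_S$ into finitely many sub-series indexed by the local block data near the root (the paper uses the ``start'' sets $U\subseteq[m-1]$ of the root-shadow), and recognize the admissible root-block shapes via a transfer matrix whose states record the populated drifts in a window of width $m$ together with their partition into connected components (the paper's graph $\oG_S$ on set partitions in $\bigcup_{V\subseteq[m-1]}\mB(\{0\}\cup V)$). The only cosmetic difference is that you carry an explicit ``invalid'' flag, whereas the paper encodes validity directly into the definition of $S$-shade so that the transfer graph only contains admissible states.
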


Propositions~\ref{prop:GFCatalan} and~\ref{prop:GFsemiorder} are special cases of Theorem~\ref{thm:GF}.
Our proof of Theorem~\ref{thm:GF} is constructive: we will explain how to derive the equation for $F_S\equiv F_S(x,u)$ for any set~$S$ satisfying~\eqref{eq:condition-sym-trans}.

For the rest of this section we fix a finite set $S\subset \ZZ$ satisfying~\eqref{eq:condition-sym-trans}, and we let $m=\max(S)$. We will establish Theorem~\ref{thm:GF} by describing the recursive decomposition of the trees in $\oT(S)$ at the root-block.
For a start, let us describe which marked trees in $\oT_m:=\bigcup_{n=0}^\infty \oTmn$ are in~$\oT(S)$. 

For a marked tree $(T,\mu)\in \oT_m$, we call \emph{shadow} of a block $B\in\blocks(\mu)$, the set 
$$\shadow(B):=\{\driftT(j)-\driftT(a)\mid j\in B\},$$
where $a\in B$ is the ancestor of all the nodes in $B$.

\begin{definition}
A \emph{$S$-shadow} is a finite set $D\subseteq \NN$ such that $0\in D$ and the graph $G_D$ having vertex set $D$ and edge set
$$E_D:=\{\{i,j\}\mid i,j\in D, ~i-j\in S \}$$
is connected. We let $\mD_S$ be the set of $S$-shadows. 
\end{definition}

The following lemma shows the relevance of $S$-shadows for counting trees in $\oT(S)$.
\begin{lemma}\label{lem:shadow}
\bitem
\item If $0\in S$, a marked tree $(T,\mu)\in \oTmn$ is $\mA_S^n$-connected if and only if every block $B$ of the partition $\blocks(\mu)$ satisfies $\shadow(B)\in \mD_S$.
\item If $0\notin S$, a marked tree $(T,\mu)\in \oTmn$ is $\mA_S^n$-connected if and only if every block $B$ of the partition $\blocks(\mu)$ satisfies $\shadow(B)\in \mD_S$ and ($|\shadow(B)|>1$ or $|B|=1$).
\eitem
\end{lemma}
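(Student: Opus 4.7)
The plan is to reduce the $\mA_S^n$-connectedness of a block $B\subseteq [n]$ of $(T,\mu)$ to a purely graph-theoretic statement about its shadow $D=\shadow(B)$. First I would unpack the definitions: since $S=-S$, the hyperplane $\{x_i-x_j=t\}$ lies in $\mA_S^n$ iff $t\in S$, so the graph $G$ of Definition~\ref{def:A-connected-tree} has vertex set $B$ and edges $\{i,j\}$ precisely when $\driftT(i)-\driftT(j)\in S$. Letting $a\in B$ be the common ancestor of the block, the map $\phi\colon B\to D$ defined by $\phi(j):=\driftT(j)-\driftT(a)$ is surjective, and the edges of $G$ depend only on $\phi$: $\{i,j\}\in E(G)$ iff $\phi(i)-\phi(j)\in S$. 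Hence $G$ is a ``blow-up'' of the relation $d\sim d'\iff d-d'\in S$ on $D$, whose off-diagonal part is exactly the edge relation of $G_D$.

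If $0\in S$, any two nodes of $B$ lying in the same fiber of $\phi$ are adjacent in $G$, so each fiber is a clique. Contracting these cliques yields a quotient graph equal to $G_D$, and thus $G$ is connected iff $G_D$ is. Combined with the fact that $0\in D$ always, this gives the first bullet.

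If $0\notin S$, the fibers of $\phi$ carry no internal edges. When $|D|=1$, $G$ is edgeless on $|B|$ vertices, so it is connected iff $|B|=1$ (and $D=\{0\}\in\mD_S$ trivially). When $|D|>1$, I would argue that $G$ is connected iff $G_D$ is: a disconnection of $G_D$ lifts to a disconnection of $G$ through the fibers, and conversely, if $G_D$ is connected with $|D|>1$, then every $d\in D$ has a $G_D$-neighbor $d'\neq d$, so the fibers over $d$ and $d'$ form a complete bipartite subgraph of $G$, and chaining along any $G_D$-path between the drifts of two given nodes of $B$ connects them in $G$. This yields the second bullet.

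The lemma then follows by quantifying over all blocks of $(T,\mu)$. The only step needing care is this last propagation argument when $0\notin S$ and $|D|>1$, and it is handled cleanly by the complete-bipartite structure between fibers over $G_D$-adjacent drifts.
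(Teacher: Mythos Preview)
Your proof is correct and is precisely the argument the paper has in mind: the paper itself does not give a proof, stating only that ``Lemma~\ref{lem:shadow} follows easily from the definitions, and we leave its proof to the reader.'' Your reduction via the surjection $\phi:B\to D$ and the fiber/quotient analysis (cliques on fibers when $0\in S$, complete bipartite pieces between adjacent fibers when $0\notin S$ and $|D|>1$) is exactly the natural way to unpack those definitions, and every step checks out.
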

Lemma~\ref{lem:shadow} follows easily from the definitions, and we leave its proof to the reader. Next, we focus on the $\mA_S^n$-cadet condition.
For an $S$-shadow $D\subset \NN$ we define
$$\start(D):=D\cap [m-1],~\textrm{ and } ~\eend(D):=(\max(D)-D)\cap [m-1],$$
where $\max(D)-D:=\{\max(D)-d\mid d\in D\}$.
For a block $B$ of the partition $\blocks(\mu)$ associated to a tree $(T,\mu)\in \oT(S)$, we use the notation $\start(B):=\start(\shadow(B))$ and $\eend(B):=\eend(\shadow(B))$.
We also define for $V,W\subset [m-1]$ the following \emph{connection set}
$$C_S^{V,W}=\{0\}\cup \{s\in [m]\mid \exists d\in \{0\}\cup V,~\exists d'\in \{0\}\cup W,\textrm{ such that }s+d+d'\in S\}.$$

The following lemma shows the relevance of connection sets for counting trees in $\mT(S)$.
\begin{lemma}\label{lem:connection}
 Let $(T,\mu)$ be a marked tree in $\oTmn$ which is $\mA_S^n$-connected.
\bitem
\item If $0\in S$, the tree $(T,\mu)$ belongs to $\oT(S)$ if and only if for every non-marked cadet edge $\{i,j\}$, one has $i=s\cadet(j)$ for some $s$ in $\mC_S^{\eend(B),\start(B')}$, where $B$ and $B'$ are the blocks of $\blocks(\mu)$ containing $j$ and $i$ respectively.
\item If $0\notin S$, the tree $(T,\mu)$ belongs to $\oT(S)$ if and only if for every non-marked cadet edge $\{i,j\}$, one has $i=s\cadet(j)$ for some $s$ in $\mC_S^{\eend(B),\start(B')}$, where $B$ and $B'$ are the blocks of $\blocks(\mu)$ containing $j$ and $i$ respectively and ($s>0$ or $|B|>1$ or $|B'|>1$ or $i<j$).
\eitem
\end{lemma}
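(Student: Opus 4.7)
The plan is to verify, for each non-marked cadet edge $\{i,j\}$ with $i=s\cadet(j)$, that the clause in the $\mA_S^n$-cadet condition for that edge is equivalent to the clause in the lemma. Since both conditions are conjunctions over all such edges, this per-edge equivalence suffices. As preparation, observe that because $\{i,j\}$ is a cadet edge that is not marked, $j$ is the deepest node of its block $B$ and $i$ is the topmost (ancestor) node of its block $B'$; consequently, as $j'$ runs over $B$ the quantity $\driftT(j)-\driftT(j')$ runs exactly over $\max(\shadow(B))-\shadow(B)$, and as $i'$ runs over $B'$ the quantity $\driftT(i')-\driftT(i)$ runs exactly over $\shadow(B')$. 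Setting $d=\driftT(j)-\driftT(j')\ge 0$ and $d'=\driftT(i')-\driftT(i)\ge 0$, this gives $\driftT(i')-\driftT(j')=s+d+d'$.

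With this setup I would proceed by a case analysis on $s$, repeatedly using $S\subseteq[-m,m]$. \emph{If $s>0$:} any pair $(d,d')$ contributing a value in $S$ satisfies $s+d+d'\in[1,m]$, forcing $d,d'\in[0,m-1]$. The achievable values of $d$ in that range are exactly $\{0\}\cup\eend(B)$, and those of $d'$ are exactly $\{0\}\cup\start(B')$, so the $\mA_S^n$-cadet clause is equivalent to the existence of such $d,d'$ with $s+d+d'\in S$, i.e.\ exactly to $s\in\mC_S^{\eend(B),\start(B')}$. \emph{If $s=0$ and $0\in S$:} the pair $i'=i,\, j'=j$ has drift difference $0\in S$, so the $\mA_S^n$-cadet clause and the lemma's condition (which is trivially satisfied, since $0\in\mC_S^{V,W}$ for all $V,W$ by definition) both hold automatically. \emph{If $s=0$ and $0\notin S$:} when $|B|=|B'|=1$ the only candidate pair is $(i,j)$ with drift difference $0\notin S$, so the cadet clause reduces to $i<j$; when $|B|>1$, the $\mA_S^n$-connectedness of $B$ forces $j$ to have a neighbor $j'\in B$ in the connection graph, giving $\driftT(j)-\driftT(j')\in S\cap[1,m]$, and taking $i'=i$ yields the required pair (the case $|B'|>1$ is symmetric). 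Thus the cadet clause becomes ``$i<j$ or $|B|>1$ or $|B'|>1$'', which combined with the trivial $0\in\mC_S^{V,W}$ is exactly the lemma's second-case clause.

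The main obstacle I anticipate lies in the last sub-case, where $\mA_S^n$-connectedness must be invoked non-trivially to produce a short-distance in-block neighbor of $j$; this step is handled simply by observing that in a connected graph on more than one vertex no vertex can be isolated, so the deepest vertex of $B$ must have a neighbor whose drift difference from it lies in $S\cap[1,m]$.
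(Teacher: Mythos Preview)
Your argument is correct and follows essentially the same route as the paper's proof: a per-edge verification that the $\mA_S^n$-cadet clause is equivalent to the connection-set clause, with the only nontrivial case being $s=0$ when $0\notin S$. The one minor difference is that in the sub-case $|B|>1$ the paper appeals to Lemma~\ref{lem:shadow} (so that $\shadow(B)\in\mD_S$ and hence some $d\in\eend(B)\cap S$ exists), whereas you argue directly from $\mA_S^n$-connectedness that the deepest vertex $j$ has a neighbor $j'$ in the block's connection graph with $\driftT(j)-\driftT(j')\in S\cap[1,m]$; both observations are immediate and yield the same conclusion.
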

\begin{proof} 
In the case $0\in S$, Lemma~\ref{lem:connection} follows easily from the definitions. Indeed, in this case 
$$C_S^{\eend(B),\start(B')}=\{s\in [0;m]\mid \exists d\in \{0\}\cup \eend(B),~\exists d'\in \{0\}\cup \start(B'),\textrm{ such that }s+d+d'\in S\},$$
and the condition $s\in\mC_S^{\eend(B),\start(B')}$ exactly translates the fact that the $\mA_S^n$-cadet condition holds for the cadet edge $\{i,j\}$. 

We now examine the case $0\notin S$. The only subtlety concern the situation $i=0\cadet(j)$ with $i>j$. In this case, the $\mA_S^n$-cadet condition holds for $\{i,j\}$ if and only if
\begin{equation}\label{eq:cond-case-s0}
\exists d\in \{0\}\cup \eend(B),~\exists d'\in \{0\}\cup \start(B')\textrm{ such that }d+d'\in S.
\end{equation} 
Hence we need to prove that Condition~\eqref{eq:cond-case-s0} holds if and only if $|B|>1$ or $|B'|>1$. If $|B|=|B'|=1$, then clearly~\eqref{eq:cond-case-s0} does not hold. If $|B|>1$, then by Lemma~\ref{lem:shadow}, $|\shadow(B)|>1$ and $\shadow(B)\in \mD_S$ (since $(T,\mu)$ is $\mA_S^n$-connected). In this case $\exists d\in\eend(B)\cap S$, hence~\eqref{eq:cond-case-s0} holds. Similarly, if $|B'|>1$, then $\exists d'\in\start(B')\cap S$, hence~\eqref{eq:cond-case-s0} holds.
\end{proof}

We will now use Lemmas~\ref{lem:shadow} and~\ref{lem:connection} to derive an equation for $F_S$.
For a marked tree $(T,\mu)\in\oT(S)$ with at least one node, we call \emph{root-shadow} the shadow of its root-block.
For $U\subseteq [m-1]$ we define $\oT^U$ to be the set of marked trees on $\oT(S)$ (with at least one node) having root-shadow $D$ satisfying $\start(D)=U$. We let $F_S^U(x,u)$ be the generating function of those marked trees:
$$F^U_S(x,u):=\sum_{(T,\mu)\in\oT^U}u^{\#\blocks(\mu)}\frac{x^{|T|}}{|T|!}.$$ 
Clearly,
\begin{equation}\label{eq:F-FU}
F_S(x,u)=1+\sum_{U\subseteq[m-1]}F^U_S(x,u).
\end{equation}

Next we derive a system of equations for the generating functions $F_S^U$. For this we need to consider some generating functions of $S$-shadows, and of connection sets. For $U,V\in[m-1]$ we define $\mD_S^{U,V}$ as the set of $S$-shadows $D$ such that $\start(D)=U$ and $\eend(D)=V$. We define 
$$D_S^{U,V}(X,Y):=\sum_{D\in \mD_S^{U,V}}X^{|D|}Y^{\max(D)}\quad \textrm{ and }\quad C_S^{U,V}(Y):=\sum_{s\in \mC_S^{U,V}}Y^s,$$
where $|D|$ is the cardinality of $D$.


\begin{example}
In the case $S=[-m;m]$ one has $C_S^{U,V}(Y)=1+Y+Y^2+\ldots+Y^m$ for all $U,V\subseteq [m-1]$, and $\sum_{U,V\subseteq [m-1]}D_S^{U,V}(X,Y)=\frac{X}{1-X(Y+Y^2+\ldots,Y^m)}$.
\end{example}

Using Lemmas~\ref{lem:shadow} and~\ref{lem:connection} we will establish the following result. 
\begin{cor} \label{cor:system-FSU}
Consider the series $F_S\equiv F_S(x,u)$ and $F_S^U\equiv F_S^U(x,u)$ for $U\subseteq [m-1]$.\\
If $0\in S$ or $U\neq \emptyset$, then
\begin{equation}\label{eq:system-FU}
F_S^U=\sum_{V\subseteq[m-1]}u\,D_S^{U,V}(\XX ,F_S)\cdot\bigg(1+\sum_{W\subseteq [m-1]}C_S^{V,W}(F_S)\cdot F_S^W\bigg).
\end{equation}
If $0\notin S$, then 
\begin{eqnarray}\label{eq:system-Fempty}
F_S^\emptyset&=&(1-e^{-xu})\bigg(1+\sum_{V\subseteq [m-1]}C_S^{\emptyset,V}(F_S)\cdot F_S^V\bigg) \nonumber\\
&&+\sum_{V\subseteq[m-1]}u\,\big(D_S^{\emptyset,V}(\XX ,F_S)-\one_{V=\emptyset}(\XX )\big)\cdot\bigg(1+\sum_{W\subseteq [m-1]}C_S^{V,W}(F_S)\cdot F_S^W\bigg),
\end{eqnarray}
where $\one_{V=\emptyset}$ is equal to 1 if $V=\emptyset$, and 0 otherwise.
\end{cor}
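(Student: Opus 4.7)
The plan is to prove both equations by a ``decomposition at the root-block'' argument, generalising the derivations of Propositions~\ref{prop:GFCatalan} and~\ref{prop:GFsemiorder}. Given $(T,\mu)\in\oT^U$, I would first analyse the root-block $B_1$, whose shadow $D$ satisfies $\start(D)=U$ and $\eend(D)=V$ for some $V\subseteq[m-1]$. The root-block is a chain of vertices linked by marked cadet edges, which I partition into $|D|$ ``runs'', one per drift value in $D$. Vertices within a run are connected by marked $0$-cadet edges, hence by the definition of a marked tree have increasing labels, contributing $e^x-1$ per run. A transition between two consecutive runs is a marked cadet edge at a positive position $s>0$, and it exposes $s$ ``left-sibling'' positions filled with arbitrary trees of $\oT(S)$. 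Since these $s$'s sum to $\max(D)$, the total left-sibling contribution is $F_S^{\max(D)}$. Combining the weight $u$ for the new block with these factors and summing over admissible shadows yields the root-block contribution $u\,D_S^{U,V}(\XX,F_S)$.

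Next I would decompose the structure below the last vertex $w_N$ of $B_1$. Its $m+1$ children positions are either all leaves (contributing~$1$), or contain a unique cadet at some position $s\in[0,m]$, with arbitrary subtrees at positions $0,\ldots,s-1$ (contributing $F_S^s$), leaves at positions $s+1,\ldots,m$, and a non-empty cadet subtree at position $s$ whose root-block has start $W$ (contributing $F_S^W$). This cadet edge is non-marked, and by Lemma~\ref{lem:connection} its admissibility reduces to $s\in C_S^{V,W}$ whenever $0\in S$ or $U\neq\emptyset$ (in the latter case $|B_1|>1$, so the auxiliary ``$i<j$'' clause of Lemma~\ref{lem:connection} is automatic). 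Summing over $s$ and $W$ produces the factor $1+\sum_W C_S^{V,W}(F_S)F_S^W$; combining it with the root-block contribution and summing over $V$ establishes~\eqref{eq:system-FU}.

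For the remaining case $0\notin S$ and $U=\emptyset$ I would split $\oT^\emptyset$ according to whether $B_1$ is multi-node or single-node. If $B_1$ is multi-node, the analysis just described applies, but by Lemma~\ref{lem:shadow} the shadow $D=\{0\}$ is forbidden; deleting its naive contribution $\XX$ from $D_S^{\emptyset,\emptyset}(\XX,F_S)$ is exactly the $-\one_{V=\emptyset}(\XX)$ term in~\eqref{eq:system-Fempty}, producing its second summand. If $B_1$ is single-node, I would adapt the chain decomposition of Proposition~\ref{prop:GFsemiorder}: consider the maximal sequence $v_1,v_2,\ldots,v_k$ of vertices starting at the root with $v_{i+1}=0\cadet(v_i)$ non-marked and all in pairwise distinct blocks. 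Lemma~\ref{lem:connection} forces $v_1>v_2>\cdots>v_k$ as long as each block $B(v_i)$ is single-node. Splitting into the two subcases ``$v_k$ alone / $v_k$ not alone'' exactly as in the proof of Proposition~\ref{prop:GFsemiorder}, I obtain two generating-function contributions whose sum, after the same algebraic simplification as in that proof, collapses to $(1-e^{-xu})\bigl(1+\sum_V C_S^{\emptyset,V}(F_S)F_S^V\bigr)$, the first summand of~\eqref{eq:system-Fempty}.

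The main obstacle will be the single-node root-block case when $0\notin S$: tracking the label constraint imposed by the non-marked $0$-cadet condition along the entire decreasing chain $v_1>\cdots>v_k$, and then recognising that the two subcase contributions recombine cleanly into the factor $(1-e^{-xu})(1+A)$, with $A:=\sum_V C_S^{\emptyset,V}(F_S)F_S^V$. This recombination is the same algebraic identity that transforms $H_1+H_2$ into the displayed closed-form equation for $H$ at the end of the proof of Proposition~\ref{prop:GFsemiorder}, and isolating it cleanly in the present general setting is what makes the final equation~\eqref{eq:system-Fempty} delicate.
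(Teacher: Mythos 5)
Your proposal is correct and follows essentially the same route as the paper: decompose at the root-block using the shadow generating functions (Lemma~\ref{lem:shadow}) for the block itself and the connection sets (Lemma~\ref{lem:connection}) for the non-marked cadet edge below the cap, then for $0\notin S$, $U=\emptyset$ split off the single-node root-block case and handle it via a maximal chain of singleton blocks linked by unmarked $0$-cadet edges. The only cosmetic difference is your choice of maximal chain (à la Proposition~\ref{prop:GFsemiorder}, pairwise-distinct blocks) versus the paper's (all $v_i$ alone), which shifts the case split by one vertex but produces the same fixed-point equation $F_S^\emptyset-\wt F_S^\emptyset=(e^{xu}-1)\bigl(1+\wt F_S^\emptyset+\sum_V C_S^{\emptyset,V}(F_S)F_S^V-F_S^\emptyset\bigr)$ and hence the same algebraic recombination.
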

\begin{proof}
Let us start with the case $0\in S$. Let $\oT^{U,V}$ be the set of marked trees $(T,\mu)$ in $\oT^U$ having root-shadow in $\mD_S^{U,V}$, and let 
$$F_S^{U,V}(x,u):=\sum_{(T,\mu)\in\oT^{U,V}}u^{\#\blocks(\mu)}\frac{x^{|T|}}{|T|!}.$$ 
Clearly $F_S^U=\sum_{V\subseteq [m-1]}F_S^{U,V}$, and we want to show 
\begin{equation}\label{eq:proof-FSU}
F_S^{U,V}=u\,D_S^{U,V}(\XX ,F_S)\cdot\bigg(1+\sum_{W\subseteq [m-1]}C_S^{V,W}(F_S)\cdot F_S^W\bigg).
\end{equation}
For a marked tree $(T,\mu)$ in $\oT^{U,V}$ we call \emph{cap} the node of the root-block which is the descendant of all the other. Given a tree in $(T,\mu)$ we consider two parts:
\begin{compactitem}
\item[(a)] the root-block together with the subtrees attached to the non-cap vertices of the root-block,
\item[(b)] the subtrees attached to the cap.
\end{compactitem}
By Lemma~\ref{lem:shadow} the contribution of part (a) to $F_S^{U,V}$ is 
$$\sum_{D\in\mD_S^{U,V}}u(\XX )^{|D|}F_S^{\max(D)}=u\,D_S^{U,V}(\XX ,F_S),$$
where $D$ represent the root-shadow, the terms $(e^x-1)^{|D|}$ accounts for the labeling of the vertices in the root-block, and the term $F_S^{\max(D)}$ accounts for the subtrees attached to the non-cap vertices in the root-block (there are $\max(D)$ such subtrees).

By Lemma~\ref{lem:connection}, the contribution of part (b) to $F_S^{U,V}$ is 
$$1+\sum_{W\subseteq [m-1]}C_S^{V,W}(F_S)\cdot F_S^W,$$
where the term 1 corresponds to the case where the children of the cap are all leaves, while the term $C_S^{V,W}(F_S)\cdot F_S^W$ corresponds to the case where the cadet of the cap is in a block whose shadow $D'$ satisfies $\start(D')=W$ (and $C_S^{V,W}(F_S)$ accounts for the subtrees attached to the left siblings of the cadet of the cap). This gives~\eqref{eq:proof-FSU} and completes the proof of the case $0\in S$.

\newcommand{\oTe}{\wt{\mT}^\emptyset}

In the case $0\notin S$ one can prove~\eqref{eq:system-FU} for all $U\neq \emptyset$ as before. It remains to prove~\eqref{eq:system-Fempty}. One of the differences with the case $0\in S$, is that when $0\notin S$ the blocks with shadow $D=\{0\}$ contain only one node. Let $\oTe$ be the set of marked tree in $\oT^\emptyset$ whose root-block has more than one node, and let  $\wt F_S^\emptyset$ be the corresponding generating function.
By reasoning as above, one gets
\begin{equation}\label{eq:tildeFempty}
\wt F_S^\emptyset=\sum_{V\subseteq[m-1]}u\,\big(D_S^{\emptyset,V}(\XX ,F_S)-\one_{V=\emptyset}(\XX )\big)\cdot\bigg(1+\sum_{W\subseteq [m-1]}C_S^{V,W}(F_S)\cdot F_S^W\bigg),
\end{equation}
where the term $-\one_{V=\emptyset}(\XX )$ accounts for the exclusion of the case $D=\{0\}$.

Given a tree $(T,\mu)$ in $\oT^\emptyset\setminus \oTe$, we consider the maximal sequence of nodes $v_1,\ldots,v_k$ starting with the root $v_1$, such that $v_{i+1}=0\cadet(v_i)$ for all $i\in[k-1]$, and $v_1,\ldots,v_k$ are alone in their respective blocks. Note that this implies  $v_1>v_2>\cdots>v_{k}$. The contribution to $F_S^\emptyset$ of the trees such that the children of $v_k$ are all leaves is $e^{xu}-1$. The contribution to $F_S^\emptyset$ of the trees such that $v_k$ has a $0\cadet$ $w$ in a block $B$ such that $\start(B)=\emptyset$ is $(e^{xu}-1)\wt F_S^\emptyset$ (since $w$ is not alone in its block by definition of $v_1,\ldots,v_k$). The  contribution to $F_S^\emptyset$ of the trees such that $v_k$ has an $s$-cadet $w$ in a block $B$ such that $(s,\start(B))\neq (0,\emptyset)$ is  
$$(e^{xu}-1)\bigg(\sum_{V\subseteq [m-1]}C_S^{\emptyset,V}(F_S)F_S^V-F_S^\emptyset\bigg).$$
Combining these results gives
$$ F_S^\emptyset-\wt F_S^\emptyset= (e^{xu}-1)\bigg(1+\wt F_S^\emptyset+\sum_{V\subseteq [m-1]}C_S^{\emptyset,V}(F_S)F_S^V-F_S^\emptyset\bigg).$$
Solving for $F_S^\emptyset$, one gets $\ds F_S^\emptyset=\big(1-e^{-xu}\big)\bigg(1+\sum_{V\subseteq [m-1]}C_S^{\emptyset,V}(F_S)F_S^V\bigg)+\wt F_S^\emptyset$. This, together with~\eqref{eq:tildeFempty}, gives~\eqref{eq:system-Fempty}.
\end{proof}

\begin{example}
For $S=[-m;m]$, one has $C_S^{U,V}(Y)=\frac{Y^{m+1}-1}{Y-1}$ for all $U,V\subseteq[m-1]$. Hence~\eqref{eq:F-FU} and~\eqref{eq:system-FU} imply, 
\begin{eqnarray*}
\forall U\subseteq [m-1],~F_S^U&=&\sum_{V\subseteq[m-1]}u\, D_S^{U,V}(\XX ,F_S)\cdot\bigg(1+\frac{F_S^{m+1}-1}{F_S-1}\sum_{W\subseteq [m-1]}F_S^W\bigg),
 \\ &=& 
F_S^{m+1} \sum_{V\subseteq[m-1]}u\, D_S^{U,V}(\XX ,F_S).
\end{eqnarray*}
Hence by~\eqref{eq:F-FU},
$$F_S=1+u\,F_S^{m+1}\sum_{U,V\subseteq [m-1]}D_S^{U,V}(\XX ,F_S).$$
This together with $\sum_{U,V\subseteq [m-1]}D_S^{U,V}(X,Y)=\frac{X}{1-XY(Y^m-1)/(Y-1)}$ implies Proposition~\ref{prop:GFCatalan} about Catalan arrangements. One can similarly recover  Proposition~\ref{prop:GFsemiorder} about semiorder arrangements by applying  Corollary~\ref{cor:system-FSU} to the set $S'=[-m;m]\setminus\{0\}$ (after observing that for any set $S$, $C_{S\setminus\{0\}}^{U,V}=C_{S}^{U,V}$ and $D_{S\setminus \{0\}}^{U,V}=D_{S}^{U,V}$).
\end{example}

Now, the key lemma for the proof of Theorem~\ref{thm:GF} is the following.
\begin{lemma}\label{lem:rationalD}
For all $U,V\subseteq [m-1]$, the formal power series $D_S^{U,V}(X,Y)$ is a rational function in $\QQ(X,Y)$. This rational function is explicitly computable from $S$, $U$ and $V$.
\end{lemma}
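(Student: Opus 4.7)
The plan is to encode each $S$-shadow $D \in \mD_S^{U,V}$ as a binary word $w_D = d_0 d_1 \cdots d_n$ of length $n+1$, where $n = \max(D)$ and $d_i = 1$ iff $i \in D$ (so $d_0 = d_n = 1$). Then $|D|$ equals the number of $1$s in $w_D$ and $\max(D) = |w_D| - 1$. Consequently, $D_S^{U,V}(X,Y)$ is, up to the overall factor $Y^{-1}$, the generating function of the language $L = \{w_D \mid D \in \mD_S^{U,V}\}$, with $X$ marking each $1$ and $Y$ marking each letter. To establish the lemma, it will suffice to show that $L$ is a regular language, explicitly constructible from $S$, $U$, $V$; the standard transfer-matrix lemma then yields a rational expression for $D_S^{U,V}(X,Y)$ in $\QQ(X,Y)$.

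The heart of the construction is a finite automaton recognizing $L$. Since $S \subseteq [-m, m]$, two elements of $D$ at distance greater than $m$ cannot be adjacent in $G_D$, so connectivity is controlled by a sliding window of size $m$. I propose states of the form $(W, \pi, \delta)$, where $W \subseteq \{0, 1, \ldots, m-1\}$ records which of the last $m$ positions lie in $D$ (offset $0$ being the position just read), $\pi$ is a set partition of $W$ whose blocks correspond to the components of $G_{D \cap [0,k]}$ restricted to the window, and $\delta \in \{0,1\}$ is a \emph{dead flag} indicating whether some component of $G_{D \cap [0,k]}$ has already been orphaned, i.e., no longer has any representative within the window. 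Reading a new letter $d$ shifts $W$ and $\pi$ one step, flipping $\delta$ to $1$ if the departing position was the sole remaining witness of its block; then, if $d = 1$, the new position is inserted at offset $0$ of the updated window and its block is merged with every block containing some $v \in W$ with $v \in S$ (distance measured from the new position). The set of initial states enforces $\start(D) = U$, a condition on the first $m$ letters; the accepting states simultaneously require $d_n = 1$, $|\pi| = 1$, $\delta = 0$, and $\eend(D) = V$, the latter being a condition on the final $m$ letters that is legible from $W$ at acceptance.

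Since the state set is finite --- of cardinality at most $B_m \cdot 2^{m+1}$, where $B_m$ is the $m$-th Bell number --- the transfer-matrix lemma, applied with letter weights $XY$ and $Y$ for the symbols $1$ and $0$ respectively and dividing the resulting series by $Y$ to correct the length-versus-max discrepancy, yields a rational expression in $\QQ(X,Y)$ for $D_S^{U,V}(X,Y)$. The construction is explicit in $S$, $U$, and $V$, so the rational function is computable.

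The main technical obstacle is verifying that $(W, \pi, \delta)$ is a faithful invariant for the purpose of deciding future transitions and acceptance. Specifically, one needs to check: (a) two vertices outside the window can be merged via future vertices only if they are already in the same block of $\pi$ through witnesses in $W$, since any adjacency involves a distance of at most $m$; (b) once $\delta = 1$, the final graph $G_D$ cannot be connected regardless of the remaining letters, because the orphaned component contains no vertex adjacent to anything read subsequently; and (c) both $\start(D) = U$ and $\eend(D) = V$ reduce to local finite-length constraints legible from the automaton's data. Each of these is a routine bookkeeping check given the definitions; the substantive content is simply the observation that connectivity of $G_D$ is a sliding-window property whenever $S$ is bounded.
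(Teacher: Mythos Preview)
Your approach is essentially the same as the paper's: both set up a finite automaton whose state records, for the last few positions of $D$, which are occupied and how they are partitioned into connected components of $G_D$, and then apply the transfer-matrix method. The paper's version is a bit sleeker --- it reads one \emph{element} of $D$ per step (an edge of weight $XY^d$) rather than one position, and it replaces your dead flag by the notion of an ``$S$-shade'' (a prefix of $D$ in which no component has yet been orphaned) --- but conceptually the two constructions are the same.

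There is, however, a genuine off-by-one error in your automaton. You take the window to be the last $m$ positions, $W\subseteq\{0,\ldots,m-1\}$, and you shift \emph{before} merging. But $m=\max(S)\in S$, so two elements of $D$ at distance exactly $m$ are adjacent in $G_D$. In your transition, the position at old offset $m-1$ is discarded during the shift, and only afterwards is the new position at offset $0$ inserted and merged with offsets $v\in W\cap S\subseteq\{1,\ldots,m-1\}$; the adjacency at distance $m$ is therefore never seen. Concretely, for $D=\{0,m\}$ your automaton sets $\delta=1$ when reading $d_m=1$ (the sole block $\{m-1\}$ departs with no witness left), and so rejects $D$, even though $G_D$ is a single edge and $D$ is a perfectly good $S$-shadow. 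The fix is immediate: either enlarge the window to $m+1$ positions (offsets $0,\ldots,m$), or perform the merge before discarding the departing offset. With that correction your argument goes through.
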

We will prove Lemma~\ref{lem:rationalD} using a transfer-matrix method. Before that, we show that it implies Theorem~\ref{thm:GF}.

\begin{proof}[Proof of Theorem~\ref{thm:GF}]
Corollary~\ref{cor:system-FSU} can be interpreted as giving a system of $2^{m-1}$ linear equations for the $2^{m-1}$ unknown series $\{F_S^U\}_{U\subseteq [m-1]}$ (in the ring $\QQ[u][[x]]$ of formal power series in $x$ with coefficients in $\QQ[u]$). By Lemma~\ref{lem:rationalD} the coefficients of this linear system are rational functions in $u,e^{x},e^{xu}$ and $F_S(x,u)$ which can be computed explicitly from $S$ (in the case $0\in S$, the coefficients do not involve $e^{xu}$). Moreover, observe that $D_S^{U,V}(0,Y)=0$ for all sets $U,V\subseteq [m-1]$, which implies that the specialization $x=0$ of the system reads simply $F_S^U=0$ for all $U\subseteq [m-1]$. This shows that the system of linear equations is independent, hence can be solved by determinants. Doing so, one obtains an equation of the form $F_S^U=g_U(u,e^x,e^{xu},F_S)$ for all $U\subseteq [m-1]$, where $g_U$ is a rational function (in the case $0\in S$, there is no dependence in $e^{xu}$). Furthermore, for all $n\in \NN$, one can use the equation $F_S^U=g_U(u,e^x,e^{xu},F_S)$ to compute the coefficient of $x^n$ in $F_S^U$ from the coefficients $\{[x^k]F_S\}_{k\in[0;n-1]}$ (this fact is deduced from the form of the linear system at the specialization $x=0$). By~\eqref{eq:F-FU}, one gets the equation 
$$F_S=1+\sum_{U\subseteq [m-1]}g_U(u,e^x,e^{xu},F_S)$$ 
which determines $F_S$ uniquely as a series in $\QQ[u][[x]]$ (since this equation can be used to compute its coefficients recursively).
If $0\in S$ (resp. $0\notin S$) the above equation has the form $P(F_S(x,u),e^{x},u)$ (resp. $P(F_S(x,u),e^{xu},e^x,u)$) for a non-zero polynomial $P$ with coefficients in $\QQ$, as claimed. 
\end{proof}

It only remains to prove Lemma~\ref{lem:rationalD}.
%

\begin{proof}[Proof of Lemma~\ref{lem:rationalD}]
In order to count $S$-shadows we need to consider more general sets (which correspond to ``prefixes'' of $S$-shadows). We call \emph{$S$-shade} a finite set $D\subseteq \NN$ such that $0\in D$ and the graph $G_D$ having vertex set $D$ and edge set
$$E_D:=\{\{i,j\}\mid i,j\in D, ~i-j\in S \}$$
is such that every connected component of $G_D$ contains a vertex in $[\max(D)-m+1;\max(D)]$. Note that the $S$-shadows are the $S$-shades such that $G_D$ is connected.

An important observation is that, for any $S$-shade $D\neq\{0\}$, the set $D'=D\setminus \{\max(D)\}$ is an $S$-shade. Indeed, the graph $G_{D'}$ is obtained from $G_D$ by deleting the vertex $v=\max(D)$, which potentially breaks the connected component of $G_{D}$ containing $v$ in several connected components of $G_D'$, but each of them has a vertex in the interval $[v-m;v-1]$. Therefore one can construct $S$-shades inductively by adding one element at a time. We will now describe this process in detail, and explain how to deduce from it the expression of $D_S^{U,V}(X,Y)$.

Let $D$ be an $S$-shade, and let $\oV=\{0\}\cup\eend(D)=(\max(D)-D)\cap [0;m-1]$. We define $\comp(D)$ as the partition of $\oV$ corresponding to the connected components of $G_D$ (two elements $i,j\in \oV$ are in the same block of $\comp(D)$ if and only if the vertices $\max(D)-i$ and $\max(D)-j$ are connected in $G_D$). Let $r_D=\max(\min(B)\mid B\in\comp(D))\in[0;m-1]$. By definition of $S$-shades, every connected component of $G_D$ has a vertex in $[\max(D)-r_D;\max(D)]$.

It is easy to see that, for a positive integer $d>0$, the set $D^{(d)}:=D\cup \{\max(D)+d\}$ is an $S$-shade if and only if $d\leq m-r_D$. 
Indeed, if $d<m-r_D$ then it is trivial to see that $D^{(d)}$ is an $S$-shade; if $d=m-r_D$ then $D^{(d)}$ is an $S$-shade because $m\in S$ (so that the vertex $v=\max(D)+d$ of $D^{(d)}$ is connected to the connected component of $G_D$ containing the vertex $\max(D)-r_D$); and if $d>m-r_D$ then $D^{(d)}$ is not an $S$-shade because the connected component of $G_{D^{(d)}}$ containing the vertex $\max(D)-r_D$ is not connected to any vertex in $[\max(D^{(d)})-m+1;\max(D^{(d)})]$. 

Let $\vD_S$ be the set of all $S$-shades. Let $\mG_S$ be the infinite directed graph with vertex set $\vD_S$ and edge set 
$$\mE_S:=\{(D,D^{(d)})\mid D\in \vD_S, ~d\in [m-r_D]\}.$$
From the above discussion it is clear that the set or $S$-shades is in bijection with the set of directed paths in $\mG_S$ starting at the vertex $D_0:=\{0\}$.
The $S$-shadows are in bijection with the directed paths in $\mG_S$ starting at $D_0:=\{0\}$ and ending at a vertex in $\mD_S$ (that is, ending at an $S$-shade $D$ such that partition $\comp(D)$ has a single block). 
Let us now fix $U,V\subseteq [m-1]$ and consider the set $\mD_S^{U,V}$ of $S$-shadows $D$ such that $\start(D)=U$ and $\eend(D)=V$. The set $\mD_S^{U,V}$ is is bijection with a set of paths in $\mG_S$ starting at $D_U:=\{0\}\cup U\in \vD_S$, but one needs to be mindful that not every path starting at $D_U$ corresponds to $S$-shade $D$ satisfying $\start(D)=U$. To be precise, among the paths of positive lengths starting at $D_U$, the paths in correspondence with $S$-shades satisfying $\start(D)=U$ are those starting with an edge of the form $(D_U,D_U^{(d)})$ with $d\geq m-\max(U)$. 
This gives the following result, which we encapsulate for future reference.\\
\textbf{Fact:}
Let $U,V\subseteq [m-1]$, let $D_U:=\{0\}\cup U$ and let $D_V:=\{0\}\cup V$.
The set $\mD_S^{U,V}\setminus\{D_U\}$ is in bijection with the set of paths in the graph $\mG_S$ starting at one of the vertices $D_U^{(d)}$ for $d$ in $[m-\max(U); m-r_{D_U}]$ and ending at a vertex $D\in \vD_S$ such that the set partition $\comp(D)$ has a single block which is the set $D_V$.

We will now embed the infinite graph $\mG_S$ in a finite graph $\oG_S$. The main observation is that for an $S$-shade $D$ and $d\in [m-r_D]$, the partition $\comp(D^{(d)})$ is uniquely determined by $\comp(D)$ and $d$. Indeed,
if the partition $\comp(D)$ has blocks $B_1,\ldots,B_k$, then upon denoting $B_i^{(d)}=(d+B_i)\cap [m-1]$ for all $i\in [k]$, the partition $\comp(D^{(d)})$ is obtained from the partition $\{\{0\},B_1^{(d)},B_2^{(d)},\ldots,B_k^{(d)}\}$ 
by merging the block $\{0\}$ with all the blocks containing an element in $S$.

For a set $V$, let $\mB(V)$ be the set of set partitions of $V$.
Let 
$$\oD_m:=\bigcup_{V\subseteq [m-1]}\mB(\{0\}\cup V).$$
For a partition $\oD=\{B_1,\ldots,B_k\}\in \oD_m$, we define $r_\oD=\max(\min(B_1),\ldots,\min(B_k))$. Then, for all $d\in [m-r_{\oD}]$, upon denoting $B_i^{(d)}:=(d+B_i)\cap [m-1]$ for all $i\in [k]$, we define $\oD^{(d)}\in \oD_m$ as the partition obtained from the partition 
$\{\{0\},B_1^{(d)},B_2^{(d)},\ldots,B_k^{(d)}\}$
by merging the block $\{0\}$ with all the blocks containing an element in $S$.
Let $\oG_S$ be the finite directed graph with vertex set $\oD_m$ and edge set 
$$\oE_S=\{(\oD,\oD^{(d)})\mid \oD\in \oD_m, ~d\in [m-r_\oD]\}.$$

From the above discussion it is clear that for all $k\in\NN$, $D\in \vD_S$ and $\oD'\in\oD_m$, the set of paths of length $k$ in $\mG_S$ from the $S$-shade $D$ to an $S$-shade $D'$ such that $\comp(D')=\oD'$ is in bijection with the set of paths of length $k$ in $\oG_S$ from $\comp(D)$ to $\oD'$. In particular, the set of $S$-shadows is in bijection with the set of paths in $\oG_S$ starting at the set partition $\{\{0\}\}$ and ending at any set partition in $\oD_m$ consisting of a single block. More generally, for any subsets $U,V\subseteq [m-1]$, the Fact stated above implies that the set $\mD_S^{U,V}\setminus\{D_U\}$ is in bijection with the set of paths in the graph $\oG_S$ starting at one of the vertices $\comp(D_U^{(d)})$ for $d$ in $[m-\max(U); m-r_{D_U}]$, and ending at the single-block partition $\{D_V\}$. 
This readily implies
\begin{equation}\label{eq:GF-as-path}
D_S^{U,V}(X,Y)= 
 X^{1+|U|}Y^{\max(U)}\bigg(\one_{\eend(D_U)=V}+X\,\sum_{d=m-\max(U)}^{m-r_{D_U}}
 \sum_{\substack{P\textrm{ path in }\oG_S\\ \textrm{ starting at }\comp(D_U^{(d)})\\ \textrm{and ending at }\{D_V\} }}w(P)\bigg),
\end{equation}
where $w(P)=\prod_{e\textrm{ edge of the path }P}w(e)$ and the \emph{weight} $w(e)$ of an edge $e\in \oE_S$ of the form $(\oD,\oD^{(d)})$ is $w(e)=XY^d$.
 
Finally, the transfer-matrix formula expresses the right-hand side of~\ref{eq:GF-as-path} as a rational function in $X$ and $Y$. Precisely, one can consider the weighted adjacency matrix $A_S=(a_{\oD,\oD'})_{\oD,\oD'\in \oD_m}$ of the graph $\oG_S$, whose entry $a_{\oD,\oD'}$ is equal to $XY^d$ if $\oD'=\oD^{(d)}$ for some $d\in [m-r_\oD]$, and equal to 0 otherwise. Then, for all $\oD,\oD' \in \oD_m$, the sum 
$$\sum_{\substack{P\textrm{ path in }\oG_S\\ \textrm{ starting at }\oD \textrm{ and ending at }\oD'}}w(P)$$
 is equal to the entry $\oD,\oD'$ in the matrix $(\Id-A_S)^{-1}$. In particular, the expression of this matrix entry as a ratio of determinants shows that the above sum is a rational function in $X$ and $Y$. Moreover the matrix $A_S$ is computable from $S$. Thus, for all $U,V\subseteq [m-1]$ the series $D_S^{U,V}(X,Y)$ is a rational function of $X,Y$ which can be computed from $S$. This completes the proof of Lemma~\ref{lem:rationalD}, and Theorem~\ref{thm:GF}. 
\end{proof}

\begin{remark}
For a set $S\subseteq \ZZ$ satisfying~\eqref{eq:condition-sym-trans}, let $F_{S,k}(x)=[u^k]F_S(x,u)$ be the generating function of $k$-dimensional faces of the arrangements $\mA_S^n$. By Theorem~\ref{thm:main} $F_{S,k}(x)$ is the generating function of the marked trees in $\oT(S)$ having $k$ blocks. It is easy to see from the above proof that $F_{S,1}$ is a rational function of $e^x$. Precisely, 
$$F_{S,1}(x)=\sum_{U,V\subseteq [m-1]}D_S^{U,V}(e^x-1,1)-\one_{0\notin S}(e^x-1-x).$$
With a bit more work, one could similarly show that for all $k\in \NN$, $F_{S,k}(x)$ is a rational function of $e^x$ computable from $S$.
\end{remark}


\section{Proof of Theorem~\ref{thm:main}}\label{sec:proof}
In this section we prove Theorem~\ref{thm:main}.
Roughly speaking, the proof consists in seeing the faces of an arrangement $\mA\subseteq \mA_m^n$ as regions of the restrictions of $\mA$ (to affine subspaces in the intersection lattice), and applying Theorem~\ref{thm:regions} to these arrangements. 
Hence this proof strategy first calls for identifying each restriction of the strongly transitive arrangement $\mA$ with a transitive arrangement.

Recall that the \emph{intersection lattice} $\mL(\mA)$ of a hyperplane arrangement $\mA\subset \RR^n$ is the set of non-empty intersections of hyperplanes in $\mA$:
$$\mL(\mA):=\bigg\{\bigcap_{i=1}^k H_i~\bigg|~ k\geq 0,H_1,\ldots,H_k\in\mA~\textrm{ such that }\bigcap_{i=1}^k H_i\neq \emptyset\bigg\}.$$
For $L\in \mL(A)$ we denote by $\mA_L$ the restriction of $\mA$ to the affine vector space $L$. 

Consider an arrangement $\mA\subseteq \mA_m^n$ and an affine subspace $L\in\mL(\mA)$ of dimension $d$. We want to identify the arrangement $\mA_L$ with an arrangement $\wA_L\subseteq \mA_{mn}^d$. 
If $L$ is contained in a hyperplane of the form $\{x_i-x_j=s\}$ we write $i\Lsim j$. This is an equivalence relation, and we denote by $\blocks(L)$ the set of equivalence classes. Let $\{B_1,\ldots,B_d\}=\blocks(L)$. 
For concreteness of notation, we adopt the convention that $\min(B_1)<\min(B_2)<\cdots<\min(B_d)$. Next, we define a linear order $\precL$ within each block $B_k$ as follows: for $i,j\in B_k\subseteq[n]$, we set $i\precL j$ if for any point $(x_1,\ldots,x_n)$ in $L$ one has $x_i<x_j$ or ($x_i=x_j$ and $i<j$). 
For all $k\in [d]$, and all $i\in B_k$ we define $\de_L(i)$ to be the value of f $x_i-x_{a_k}$ for any point $(x_1,\ldots,x_n)$ in $L$, where $a_k$ the minimal element (for the order $\precL$) of the block $B_k$. Note that $\de_L(i)\geq 0$ for all $i\in[n]$. 
Finally, we define the arrangement $\wA_L\subseteq \RR^d$ as follows: 
\begin{equation}\nonumber 
\wA_L:=\bigcup_{\substack{k,\ell\in[d]\\ k\neq \ell}} ~~\bigcup_{\substack{ i\in B_k,~ j\in B_\ell,~s\in\ZZ\\ \{x_i-x_j=s\}\in \mA}} \{x_k-x_\ell=s-\de_L(i)+\de_L(j)\}.
\end{equation}

\begin{lemma}
Let $\mA\subseteq \mA_m^n$, and let $L\in\mL(A)$.
The arrangements $\mA_L$ and $\wA_L$ are isomorphic. Moreover, $\wA_L\subseteq \mA_{mn}^d$, where $d=\dim(L)$.
\end{lemma}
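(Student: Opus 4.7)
The plan is to exhibit an explicit affine isomorphism $\psi:L\to\RR^d$ that identifies $\mA_L$ with $\wA_L$, and then verify the integer bound needed for $\wA_L\subseteq\mA_{mn}^d$. Writing $\blocks(L)=\{B_1,\ldots,B_d\}$ with $\precL$-minimal elements $a_1,\ldots,a_d$, I would set $\psi(x_1,\ldots,x_n):=(x_{a_1},\ldots,x_{a_d})$.

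Before using $\psi$, I must check that for each $k\in[d]$ and each $i\in B_k$, the offset $\de_L(i)=x_i-x_{a_k}$ is a well-defined integer constant on $L$. Since $i\Lsim a_k$ by definition of $B_k$, there is a chain of hyperplane containments $L\subseteq\{x_{i_r}-x_{i_{r+1}}=s_r\}$ with $s_r\in\ZZ$ linking $a_k$ to $i$, and $\de_L(i)$ is the telescoping sum of these $s_r$. The relation $\precL$ is a total order on each block (differences are constant on $L$, with ties broken by index), and $\de_L(i)\geq 0$ follows from $a_k$ being $\precL$-minimal in $B_k$. With $\de_L$ in hand, $\psi$ is easily seen to be a linear isomorphism $L\to\RR^d$ with inverse $(y_1,\ldots,y_d)\mapsto(z_1,\ldots,z_n)$, where $z_i:=y_k+\de_L(i)$ for $i\in B_k$.

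Next I would translate hyperplanes through $\psi$. A hyperplane $H=\{x_i-x_j=s\}$ of $\mA$ with $i\Lsim j$ contains $L$ and contributes nothing to $\mA_L$. For $i\in B_k$ and $j\in B_\ell$ with $k\neq\ell$, the identity $x_i-x_j=(x_{a_k}-x_{a_\ell})+\de_L(i)-\de_L(j)$ valid on $L$ shows that $\psi(H\cap L)=\{y_k-y_\ell=s-\de_L(i)+\de_L(j)\}$, which is exactly the hyperplane that $(i,j,s)$ contributes to $\wA_L$. Ranging over all hyperplanes of $\mA$ not containing $L$ yields the isomorphism $\mA_L\cong\wA_L$.

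For the inclusion $\wA_L\subseteq\mA_{mn}^d$, the only quantitative input is a bound on $\de_L(i)$. The chain witnessing $i\Lsim a_k$ can be taken of length at most $|B_k|-1\leq n-1$, and each of its $s_r$ satisfies $|s_r|\leq m$ since $\mA\subseteq\mA_m^n$; hence $0\leq\de_L(i)\leq m(n-1)$. The defining constant $s'=s-\de_L(i)+\de_L(j)$ then satisfies $s'\leq s+\de_L(j)\leq m+m(n-1)=mn$ and symmetrically $s'\geq -mn$, so $s'\in[-mn,mn]\cap\ZZ$ and $\wA_L\subseteq\mA_{mn}^d$. The argument is largely bookkeeping; the mildly subtle point is ensuring $\psi$ is well-defined and bijective, which reduces to the observation that $\Lsim$ captures exactly the coordinate couplings imposed by $L$, while the size bound is a routine chain-sum estimate.
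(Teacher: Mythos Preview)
Your proof is correct and follows essentially the same route as the paper: the same affine isomorphism $\psi_L(x_1,\ldots,x_n)=(x_{a_1},\ldots,x_{a_d})$, the same hyperplane-by-hyperplane translation, and the same chain bound $\de_L(i)\leq m(n-1)$ giving $\wA_L\subseteq\mA_{mn}^d$. One very minor imprecision: when $i\Lsim j$, a hyperplane $\{x_i-x_j=s\}\in\mA$ need not \emph{contain} $L$ (it may be disjoint from $L$ if $s\neq\de_L(i)-\de_L(j)$), but your conclusion that it contributes nothing to $\mA_L$ is correct either way.
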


\begin{proof}
Let $\{B_1,\ldots,B_d\}=\blocks(L)$. It is clear that $d=\dim(L)$. Consider the affine transformation 
$\psi_L:L\to \RR^d$ defined by 
\begin{equation}
\psi_L(x_1,\ldots,x_n)=(x_{a_1},\ldots,x_{a_d}),\label{eq:defpsiL}
\end{equation}
where for all $k\in [d]$, $a_k\in[n]$ is the smallest element in $B_k$ for the order $\precL$. 
Note that $\psi_L$ is invertible since for any point $(x_1,\ldots,x_n)$ in $L$, the coordinate $x_i$ is equal to $x_{a_k}+\de_L(i)$ if $i$ is in the block $B_k$. Hence $\psi_L$ is bijective.
 Moreover, a hyperplane $H=\{x_i-x_j=s\}$ of $\mA$ corresponds to a hyperplane of the restriction $\mA_L$ (equivalently, $\emptyset\neq H\cap L\neq L$) if and only if $i\in B_k$ and $j\in B_\ell$ for some $k\neq \ell$. In this case, 
 $$\psi_L(H\cap L)=\{x_k-x_\ell=s-\de_L(i)-\de_L(j)\},$$
 since $x_i=x_{a_k}+\de_L(i)$ and $x_j=x_{a_\ell}+\de_L(j)$ in $L$. So the image of $\mA_L$ by the affine isomorphism $\psi_L$ is $\wA_L$. Lastly, it is clear that $\de_L(i)\leq (n-1)m$ for all $i\in [n]$, hence $\wA_L\subseteq \mA_{mn}^d$.
\end{proof}


\begin{lemma}\label{lem:strongly-transitive-good}
If a braid-type arrangement $\mA$ is strongly transitive, then for any affine subspace $L$ in $\mL(\mA)$, the arrangement $\wA_L$ is strongly transitive.
\end{lemma}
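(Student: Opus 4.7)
The plan is to argue by contradiction. Suppose there exist distinct indices $k, \ell, p \in [d]$ and non-negative integers $u, v$ such that $\{x_k - x_\ell = u\} \notin \wA_L$ and $\{x_\ell - x_p = v\} \notin \wA_L$ but $\{x_k - x_p = u+v\} \in \wA_L$. Writing $\delta_i := \de_L(i)$ for brevity, the first two conditions unpack, via the definition of $\wA_L$, into (H1) $\{x_i - x_j = u + \delta_i - \delta_j\} \notin \mA$ for all $(i,j) \in B_k \times B_\ell$, and (H2) $\{x_j - x_{j'} = v + \delta_j - \delta_{j'}\} \notin \mA$ for all $(j,j') \in B_\ell \times B_p$; the third yields some $(i^*, j^{*'}) \in B_k \times B_p$ with $\{x_{i^*} - x_{j^{*'}} = s^*\} \in \mA$, where $s^* := u+v+\delta_{i^*}-\delta_{j^{*'}}$.

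The first attempt is to apply strong transitivity of $\mA$ directly to a chain $i^* \to j \to j^{*'}$ for a well-chosen $j \in B_\ell$. Setting $\alpha_j := u + \delta_{i^*} - \delta_j$ and $\beta_j := v + \delta_j - \delta_{j^{*'}}$, one has $\alpha_j + \beta_j = s^*$, and by (H1)--(H2) both $\{x_{i^*} - x_j = \alpha_j\}$ and $\{x_j - x_{j^{*'}} = \beta_j\}$ are absent from $\mA$. If some $j \in B_\ell$ satisfies $\alpha_j, \beta_j \geq 0$, strong transitivity of $\mA$ applied to the distinct triple $i^*, j, j^{*'}$ yields $\{x_{i^*} - x_{j^{*'}} = s^*\} \notin \mA$, contradicting the assumption. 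The symmetric case $\alpha_j, \beta_j \leq 0$ is handled by applying strong transitivity along the reversed chain $j^{*'} \to j \to i^*$, whose offsets $-\beta_j$ and $-\alpha_j$ are non-negative.

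The main obstacle is the remaining case, where every $j \in B_\ell$ makes $\alpha_j$ and $\beta_j$ non-zero with strictly opposite signs. Taking $j = a_\ell$ (so $\delta_j = 0$), this forces $\alpha_{a_\ell} = u + \delta_{i^*} > 0$ and $\beta_{a_\ell} = v - \delta_{j^{*'}} < 0$, so in particular $\delta_{j^{*'}} > v \geq 0$ and $j^{*'} \neq a_p$. Here I invoke the fact that $L \in \mL(\mA)$: within each block, the hyperplanes of $\mA$ of the form $\{x_{i_1} - x_{i_2} = \delta_{i_1} - \delta_{i_2}\}$ span a connected subgraph on that block. Hence there is a path $b_0 = j^{*'}, b_1, \ldots, b_r = a_p$ inside $B_p$ such that each consecutive hyperplane $\{x_{b_s} - x_{b_{s+1}} = \delta_{b_s} - \delta_{b_{s+1}}\}$ belongs to $\mA$. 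Since $\delta_{b_0} > v$ but $\delta_{b_r} = 0 \leq v$, there is a smallest index $s^\dagger \geq 1$ with $\delta_{b_{s^\dagger}} \leq v < \delta_{b_{s^\dagger - 1}}$.

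Finally, I apply the contrapositive of strong transitivity of $\mA$ to the defining hyperplane $\{x_{b_{s^\dagger-1}} - x_{b_{s^\dagger}} = \delta_{b_{s^\dagger-1}} - \delta_{b_{s^\dagger}}\}$ with intermediate index $a_\ell$ (which is distinct from both endpoints as they lie in a different block), using the non-negative split $s_1 = \delta_{b_{s^\dagger-1}} - v$ and $s_2 = v - \delta_{b_{s^\dagger}}$ of the offset. This forces $\{x_{b_{s^\dagger-1}} - x_{a_\ell} = s_1\} \in \mA$ or $\{x_{a_\ell} - x_{b_{s^\dagger}} = s_2\} \in \mA$. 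But these hyperplanes coincide with $\{x_{a_\ell} - x_{b_{s^\dagger-1}} = v - \delta_{b_{s^\dagger-1}}\}$ and $\{x_{a_\ell} - x_{b_{s^\dagger}} = v - \delta_{b_{s^\dagger}}\}$, both of which (H2) forbids for $(j,j') = (a_\ell, b_{s^\dagger-1})$ and $(j,j') = (a_\ell, b_{s^\dagger})$ respectively. This contradiction completes the argument.
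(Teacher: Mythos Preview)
Your proof is correct but follows a genuinely different route from the paper's. The paper argues by induction on the codimension of $L$, reducing to the case where $L$ is a single hyperplane; it then uses the permutation symmetry of strong transitivity to normalize $L=\{x_n-x_{n-1}=r\}$ and checks the condition for $\wA_L$ by a three-way case split on which of the indices $i,j,k$ equals $n-1$. You instead attack a general $L$ directly by contradiction: pick a witnessing $\mA$-hyperplane between $B_k$ and $B_p$, and try to split it through some $j\in B_\ell$ via the contrapositive of strong transitivity. When the offsets $\alpha_j,\beta_j$ have opposite signs for every $j$, you invoke the $\mA$-connectedness of the block $B_p$ (a consequence of $L\in\mL(\mA)$, independent of strong transitivity and proved in the paper as part of Lemma~\ref{lem:code-subspace}) to locate an $\mA$-hyperplane internal to $B_p$ whose offset straddles the value $v$; that hyperplane can then be split through $a_\ell$ with non-negative pieces, contradicting (H2). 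Both arguments rest on the same core trick---splitting a known $\mA$-hyperplane through an index from the ``middle'' block---but the paper's inductive reduction keeps each step elementary at the cost of a case analysis, while your direct approach is more uniform and handles arbitrary $L$ in one pass, at the price of importing the block-connectedness fact.
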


\begin{proof}
We can reason by induction on the codimension of $L$, so it suffices to treat the case where $L$ is a hyperplane of the form $\{x_a-x_b=r\}$ for some $(a,b,r)\in \trmn(\mA)$. Furthermore, it is clear that for every permutation $\pi\in\fS_n$, the arrangement $\pi(\mA)$ is strongly transitive, hence we can assume $a=n$ and $b=n-1$. Hence $L=\{x_{n}=x_{n-1}+r\}\in\mA$ for some $r\geq 0$, and in the above notation we have
\bitem 
\item $d=n-1$ and $\blocks(L)=\{B_1,\ldots,B_{n-1}\}$ with $B_i=\{i\}$ for all $i\in[n-2]$ and $B_{n-1}=\{n-1,n\}$,
\item $\de_L(i)=0$ for all $i\in[n-1]$, and $\de_L(n)=r$.
\eitem


Now let $i,j,k$ be distinct integers in $[n-1]$ and let $s,t\in \NN$ be such that 
$$\{x_i-x_j=s\}\notin \wA_L\quad \textrm{ and }\quad \{x_j-x_k=t\}\notin \wA_L.$$ 
We want to show that $\{x_i-x_k=s+t\}\notin \wA_L$. If $i,j,k\in[n-2]$, then $\{x_i-x_j=s+t\}\notin \wA_L$ since $\mA$ is strongly transitive. We will now treat the cases $j=n-1$, $i=n-1$, and $k=n-1$ in this order.\\[1mm]
\ni \emph{Case $j=n-1$:} In this case we have $\{x_{i}-x_{n-1}=s\}\notin \mA$ and $\{x_{n-1}-x_{k}=t\}\notin \mA$. Since $\mA$ is strongly transitive, we get $\{x_i-x_k=s+t\}\notin\mA$. This implies $\{x_i-x_k=s+t\}\notin\wA_L$ as wanted.\\[1mm]
\ni \emph{Case $i=n-1$:} Since $\{x_{n-1}-x_j=s\}\notin \wA_L$, we deduce $\{x_{n-1}-x_j=s\}\notin \mA$ and $\{x_n-x_j=s+r\}\notin \mA$. 
Since $\mA$ is strongly transitive, we get $\{x_{n-1}-x_k=s+t\}\notin \mA$ and $\{x_n-x_k=s+t+r\}\notin \mA$. This gives $\{x_{n-1}-x_k=s+t\}\notin \wA_L$ as wanted.\\[1mm]
\ni \emph{Case $k=n-1$:}  Since $\{x_{j}-x_{n-1}=t\}\notin \wA_L$, we deduce $\{x_j-x_{n-1}=t\}\notin \mA$ and $\{x_j-x_n=t-r\}\notin \mA$. Now we come to the important observation: for  all $r'\in[0;r]$ the hyperplane $\{x_j-x_{n-1}=r'\}$ is in $\wA_L$. Indeed, if we suppose for contradiction that $\{x_j-x_{n-1}=r'\}\notin \wA_L$, then we get $\{x_n-x_{j}=r-r'\}\notin \mA$ and $\{x_j-x_{n-1}=r'\}\notin \mA$, thus $\{x_n-x_{n-1}=r\}\notin \mA$ (because $\mA$ is strongly transitive) which is a contradiction.
From the above observation we conclude $t> r$.
Since $\mA$ is strongly transitive, we get $\{x_i-x_{n-1}=s+t\}\notin \mA$ and $\{x_i-x_n=s+t-r\}\notin \mA$. This gives $\{x_i-x_{n-1}=s+t\}\notin \wA_L$ as wanted.
\end{proof}

We can now describe the proof of Theorem~\ref{thm:main} more precisely. We want to prove that, for any strongly transitive arrangement $\mA$, the map $\bPhi_\mA$ is a bijection between the set $\Tmn(\mA)$ of marked trees and the set $\mF(\mA)$ of faces. We introduce two related sets:
\begin{eqnarray*}
\wt{\mT}(\mA)&:=&\left\{\left(L,\wt{T}\right)\mid L\in \mL(\mA),~\wt{T}\in\mT_{mn}^{\dim(L)}(\wA_L)\right\},\\
\wt{\mF}(\mA)&:=&\left\{\left(L,\wt{R}\right)\mid L\in \mL(\mA),~\wt{R}\in \mR(\wA_L)\right\},
\end{eqnarray*}
where $\mR(\wA_L)$ is the set of regions of $\wA_L$.
Our proof of Theorem~\ref{thm:main} will consist in establishing the commutative diagram of bijections represented in Figure~\ref{fig:diagramproof}.
\begin{figure}[h!]
\[
\begin{tikzcd}
(T,\mu)\in\oTmn(\mA) \arrow[d, "\textrm{bijection }\Gamma"']\arrow[rrr,"\bPhi_\mA"] &&& F\in \mF(\mA)\\
\left(L,\wt{T}\right)\in\wt{\mT}(\mA) \arrow[rrr,"\textrm{bijection }\wt{\Phi}_\mA"'] &&& \left(L,\wt{R}\right)\in \wt{\mF}(\mA)\arrow[u, "\textrm{bijection }\Theta"]
\end{tikzcd}
\]
\caption{Commutative diagram representing the proof of Theorem~\ref{thm:main}.}\label{fig:diagramproof}
\end{figure}

\newcommand{\wTmA}{\wt{\mT}(\mA)}

We start by defining $\wt{\Phi}_\mA:\wt{\mT}(\mA)\to\wt{\mF}(\mA)$ as the map which associates to each pair $(L,\wt{T})\in\wTmA$ the pair $(L,\Phi_{\wA_L}(T))\in \wt{\mF}(\mA)$. 
By combining Lemma~\ref{lem:strongly-transitive-good} with Theorem~\ref{thm:regions} from~\cite{OB}, we deduce that $\wt{\Phi}_\mA$ is a bijection between the sets $\wt{\mT}(\mA)$ and $\wt{\mF}(\mA)$.
 It is also clear that the sets $\wt{\mF}(\mA)$ and $\mF(\mA)$ are in bijection. Indeed, the faces of $\mA$ (of dimension $d$) correspond to the regions of the restrictions of $\mA$ to subspaces (of dimension $d$) in the intersection lattice $\mL(\mA)$. Precisely, the bijection $\Theta:\wt{\mF}(\mA)\to\mF(\mA)$ associates to each pair $(L,\wt{R})\in \wt{\mF}(\mA)$ the face $\psi_L^{-1}(\wt{R})\subseteq L\subseteq \RR^n$ of $\mA$, where $\psi_L$ is the affine linear transformation defined by~\eqref{eq:defpsiL}. 
 
It remains to describe the bijection $\Gamma$ between $\oTmn(\mA)$ and $\wt{\mT}(\mA)$, and to check that the diagram in Figure~\ref{fig:diagramproof} is commutative.
Before describing the bijection $\Gamma$, we need to establish a technical property satisfied by the arrangements of the form $\wA_L$.

%
%


\begin{lemma}\label{lem:cadet-condition-simple}
Let $\mA$ be a strongly transitive arrangement. Let $L\in\mL(\mA)$ and let $\{B_1,\ldots,B_d\}=\blocks(L)$. Let $\ell\in [d]$ and let $\de^\ell=\max(\de_L(j)\mid j\in B_\ell)$. Then, for all $k\in[d]\setminus\{\ell\}$ and all $t\in[0; \de^\ell]$, one has $\{x_k-x_\ell=t\}\in \wA_L$ unless $|B_k|=|B_\ell|=1$.
\end{lemma}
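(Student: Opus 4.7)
My plan is to translate the desired membership into an $\mA$-hyperplane existence question and then produce the required hyperplane via the contrapositive of strong transitivity, starting from an $\mA$-hyperplane that is guaranteed to exist because $L\in\mL(\mA)$. Unfolding the definition of $\wA_L$, the statement $\{x_k-x_\ell=t\}\in\wA_L$ is equivalent to the existence of $i\in B_k$ and $j\in B_\ell$ with $\{x_i-x_j=t+\de_L(i)-\de_L(j)\}\in\mA$. The key structural input is that for every block $B\in\blocks(L)$ of size at least two, the graph $G_B$ on $B$ with edges $\{i,j\}$ whenever $\{x_i-x_j=\de_L(i)-\de_L(j)\}\in\mA$ is connected: writing $L=\bigcap_{H\in S}H$ for some $S\subseteq\mA$, each $H=\{x_i-x_j=s\}\in S$ forces $s=\de_L(i)-\de_L(j)$ and contributes the edge $\{i,j\}$ to $G_B$, and the blocks are precisely the resulting connected components.

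\emph{Case $|B_\ell|\geq 2$.} I would first produce a ``straddling'' edge of $G_{B_\ell}$, namely $j_1,j_2\in B_\ell$ with $\{x_{j_1}-x_{j_2}=\de_L(j_1)-\de_L(j_2)\}\in\mA$ and $\de_L(j_2)\leq t\leq\de_L(j_1)$. If some $j\in B_\ell$ satisfies $\de_L(j)=t$, I take any edge of $G_{B_\ell}$ incident to $j$ and orient it suitably; otherwise $B_\ell$ splits non-trivially as $\{j\in B_\ell\mid \de_L(j)<t\}$ (containing $a_\ell$) and $\{j\in B_\ell\mid \de_L(j)>t\}$ (containing the $\precL$-maximum of $B_\ell$), and connectedness of $G_{B_\ell}$ supplies a crossing edge. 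I then apply the contrapositive of strong transitivity to $\{x_{j_1}-x_{j_2}=\de_L(j_1)-\de_L(j_2)\}\in\mA$ with splitting point $b=a_k$ and the non-negative decomposition $\de_L(j_1)-\de_L(j_2)=(\de_L(j_1)-t)+(t-\de_L(j_2))$: this forces $\{x_{j_1}-x_{a_k}=\de_L(j_1)-t\}\in\mA$ or $\{x_{a_k}-x_{j_2}=t-\de_L(j_2)\}\in\mA$, each of which is a witness (with $i=a_k$) for $\{x_k-x_\ell=t\}\in\wA_L$.

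\emph{Case $|B_\ell|=1$ and $|B_k|\geq 2$.} Then $B_\ell=\{a_\ell\}$, $\de^\ell=0$, and $t=0$. I pick a neighbor $i_1$ of $a_k$ in $G_{B_k}$, which yields $\{x_{i_1}-x_{a_k}=\de_L(i_1)\}\in\mA$, and apply the contrapositive of strong transitivity with splitting point $b=a_\ell$: this forces $\{x_{i_1}-x_{a_\ell}=\de_L(i_1)\}\in\mA$ or $\{x_{a_\ell}-x_{a_k}=0\}\in\mA$. The latter would give $a_k\Lsim a_\ell$, contradicting $k\neq\ell$, so the former holds and witnesses $\{x_k-x_\ell=0\}\in\wA_L$. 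The main obstacle I anticipate is in Case~1, namely locating a straddling edge for every $t\in[0,\de^\ell]$; this is exactly where the connectedness of $G_{B_\ell}$ derived from $L\in\mL(\mA)$ becomes indispensable, and the excluded corner $|B_k|=|B_\ell|=1$ is precisely what prevents strong transitivity from leaving only the contradictory branch $\{x_{a_k}-x_{a_\ell}=0\}\in\mA$.
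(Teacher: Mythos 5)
Your proof is correct and uses essentially the same strategy as the paper: produce a straddling edge in $G_{B_\ell}$ and apply the contrapositive of strong transitivity with $a_k$ as the intermediate index. The paper's Case~1 is identical in spirit (it finds the straddling hyperplane from a chain $b_\ell=i_1,\ldots,i_q=a_\ell$ inside $B_\ell$ and runs strong transitivity through $a_k$ by contradiction), and its Case~2 is handled more briefly, by swapping the roles of $k$ and $\ell$ and reapplying Case~1, rather than giving the direct argument you use.

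One small slip in your Case~2: the claim that $\{x_{a_\ell}-x_{a_k}=0\}\in\mA$ ``would give $a_k\Lsim a_\ell$'' is not right---$i\Lsim j$ means $L$ is \emph{contained} in some hyperplane $\{x_i-x_j=s\}$, and membership of $\{x_{a_\ell}-x_{a_k}=0\}$ in $\mA$ does not force $L$ to lie inside it. Fortunately the error is harmless: if $\{x_{a_\ell}-x_{a_k}=0\}\in\mA$, then taking $i=a_k\in B_k$, $j=a_\ell\in B_\ell$, $s=0$ in the definition of $\wA_L$ gives $\{x_k-x_\ell=0\}\in\wA_L$ directly, so \emph{both} branches of the disjunction produce the desired witness and no contradiction is needed.
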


\begin{proof} 
Let $k\in[d]\setminus\{\ell\}$ and let $t\in[0; \de^\ell]$. We first assume that $|B_\ell|>1$, and prove that $\{x_k-x_\ell=t\}\in \wA_L$ in this case. 

Let $a_k$ and $a_\ell$ be the minimal elements of $B_k$ and $B_{\ell}$ respectively, for the order $\preceqL$, and let  $b_\ell$ be the maximal element of $B_\ell$. By definition of $\prec_L$, we have  $\de_L(a_k)=\de_L(a_\ell)=0$ and $\de_L(b_\ell)=\de^\ell$.

Recall that $i,j\in[n]$ are in the same block of $\blocks(L)$ if and only if the subspace $L$ is contained in a hyperplane of the form $\{x_{i}-x_{j}=s\}$, and that in this case $s=\de_L(i)-\de_L(j)$. In particular $L$ is contained in the hyperplane $\{x_{b_\ell}-x_{a_\ell}=\de^\ell\}$. 
Since the subspace $L$ is in $\mL(A)$, it is an intersection of hyperplanes in $\mA$ which are all of the form $\{x_i-x_j=\de_L(i)-\de_L(j)\}$. From the inclusion 
$L\subseteq \{x_{b_\ell}-x_{a_\ell}=\de^\ell\}$,
we conclude that there exist indices $b_\ell=i_1,i_2,\ldots,i_m=a_\ell$ in $B_\ell$ such that $\{x_{i_j}-x_{i_{j+1}}=\de_L(i_j)-\de_L(i_{j+1})\}$ is in $\mA$.
In particular, there must exist $i,j$ among those indices such that $\de_L(j)\leq t\leq \de_L(i)$ and $\{x_i-x_j=\de_L(i)-\de_L(j)\}\in \mA$.
%
%

Now, supposing for contradiction that $\{x_k-x_\ell=t\}\notin \wA_L$, one gets $\{x_{i}-x_{a_k}=\de_L(i)-t\}\notin \mA$ and $\{x_{a_k}-x_{j}=t-\de_L(j)\}\notin \mA$. 
Since $\mA$ is strongly transitive, this implies $\{x_i-x_j=\de_L(i)-\de_L(j)\}\notin \mA$ which is a contradiction. This concludes the proof in the case $|B_\ell|>1$. 

In the case $|B_\ell|=1$, we have $t=\de^\ell=0$. By reversing the role of $k$ and $\ell$, the previous case shows that if $|B_k|>1$ then $\{x_k-x_\ell=0\}\in \wA_L$ as wanted. Finally, in the case $|B_k|=|B_\ell|=1$ there is nothing to prove.
\end{proof}

Our next step is to encode the elements of the intersection lattice $\mL(\mA)$.
\begin{definition}\label{def:A-connected-partition}
Let $\mA\subseteq \RR^n$ be a braid-type arrangement. Let $B\subseteq [n]$ be a set, and let $\de:[n]\to \NN$ be a map. The pair $(B,\de)$ is said to be \emph{$\mA$-connected} if the graph $G$ with vertex set $B$ and edge set 
$$E:=\{\{i,j\}\mid i,j\in B,~\{x_i-x_j=\de(i)-\de(j)\}\in \mA\}$$ 
is connected. 

We define $\mP(\mA)$ as the set of pairs $(\{B_1,\ldots,B_d\},\de)$, where 
\begin{compactitem}
\item $\{B_1,\ldots,B_d\}$ is a set partition of $[n]$, and
\item $\de:[n]\to \NN$ is a map such that, for all $k\in[d]$, $\min(\de(i)\mid i\in B_k)=0$ and $(B_k,\de)$ is $\mA$-connected.
\end{compactitem}
\end{definition}

\begin{lemma}\label{lem:code-subspace}
Let $\mA\subseteq \RR^n$ be a braid-type arrangement. The intersection lattice $\mL(\mA)$ is in bijection with the set $\mP(\mA)$.
 The bijection $\Lambda:\mL(\mA)\to\mP(\mA)$ associates to each affine subspace $L\in\mL(A)$ the pair $\Lambda(L)=(\blocks(L),\de_L)$. The inverse bijection is the map $\Delta:\mP(\mA)\to \mL(\mA)$ defined by 
$$\Delta(\{B_1,\ldots,B_d\},\de)=\bigcap_{k=1}^d\bigcap_{\substack{(i,j,s)\in \trmn(\mA)\\ i,j\in B_k,~s=\de(i)-\de(j)}}\{x_i-x_j=s\}.$$
\end{lemma}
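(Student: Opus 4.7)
The plan is to check three things: (i) $\Lambda$ indeed maps $\mL(\mA)$ into $\mP(\mA)$; (ii) $\Delta$ indeed maps $\mP(\mA)$ into $\mL(\mA)$; and (iii) the two maps are mutual inverses. Everything will follow from a basic linear-algebra fact about systems of equations of the form $x_a - x_b = s$.

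For (i), let $L = \bigcap_{i \in I} H_i$ with $H_i = \{x_{a_i} - x_{b_i} = s_i\} \in \mA$. Because each defining equation involves only two variables with coefficients $\pm 1$, the linear consequences of this system are exactly the equations $x_a - x_b = s$ obtainable by chaining consecutive equations; hence $a \Lsim b$ if and only if $a$ and $b$ lie in the same connected component of the graph $G$ on $[n]$ with edges $\{a_i, b_i\}_{i\in I}$. Therefore $\blocks(L)$ coincides with these components. Moreover each $H_i$ containing $L$ must have the form $\{x_{a_i} - x_{b_i} = \de_L(a_i) - \de_L(b_i)\}$, so the edges of $G$ restricted to a block $B_k$ witness the $\mA$-connectedness of $B_k$. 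The normalization $\min_{i \in B_k} \de_L(i) = 0$ holds because $a_k$, being the $\preceqL$-minimum of $B_k$, satisfies $\de_L(a_k) = x_{a_k} - x_{a_k} = 0$.

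For (ii), choose arbitrary reals $c_1, \ldots, c_d$ and set $x_i := \de(i) + c_k$ whenever $i \in B_k$. This point satisfies $x_i - x_j = \de(i) - \de(j)$ for every $i, j$ in a common block, hence lies in every hyperplane listed in the definition of $\Delta(\{B_1,\ldots,B_d\},\de)$. So the intersection is non-empty, giving an element of $\mL(\mA)$. For $\Delta \circ \Lambda = \Id$, note that $L$ equals the intersection of \emph{all} hyperplanes of $\mA$ that contain it, and as observed above every such hyperplane has the form $\{x_i - x_j = \de_L(i) - \de_L(j)\}$ with $i, j$ in a common block of $\blocks(L)$, which is exactly the collection appearing in $\Delta(\Lambda(L))$.

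For $\Lambda \circ \Delta = \Id$, set $L' := \Delta(\{B_1,\ldots,B_d\},\de)$. If $i, j \in B_k$, then by $\mA$-connectedness there is a path in the graph of Definition~\ref{def:A-connected-partition} linking $i$ and $j$, and summing the corresponding equations yields $L' \subseteq \{x_i - x_j = \de(i) - \de(j)\}$; hence $i \Lsim' j$. Conversely, the flexibility in the explicit points constructed in (ii) (the constants $c_k$ are chosen independently) shows that for $i, j$ in distinct blocks the value $x_i - x_j$ is not forced on $L'$, so $i \nLsim' j$. Thus $\blocks(L') = \{B_1,\ldots,B_d\}$. Within each block, the restriction of $\preceqL'$ orders elements first by $\de$-value and then by index, so the $\preceqL'$-minimum $a_k$ satisfies $\de(a_k) = 0$ by the normalization; therefore $\de_{L'}(i) = x_i - x_{a_k} = \de(i) - \de(a_k) = \de(i)$. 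The main obstacle is the chaining argument of step (i), which identifies $\blocks(L)$ with the components of the graph defined by any presentation of $L$ as an intersection of hyperplanes; once this is in hand the rest is routine bookkeeping, and in particular strong transitivity of $\mA$ is not required for this lemma.
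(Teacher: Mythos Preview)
Your proof is correct and follows essentially the same approach as the paper's. The paper organizes the argument slightly differently---it checks that $\Lambda$ is well-defined, then that $\Lambda$ is injective, and finally that $\Lambda\circ\Delta=\Id_{\mP(\mA)}$---whereas you verify both compositions $\Delta\circ\Lambda$ and $\Lambda\circ\Delta$ directly and also explicitly confirm non-emptiness of the intersection defining $\Delta$; but the underlying ideas (chaining equations to identify $\blocks(L)$ with graph components, and using $\mA$-connectedness to recover the partition from $\Delta$) are the same.
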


\begin{proof}
Let us first check that for any subspace $L$ in $\mL(\mA)$, the pair $(\blocks(L),\de_L)$ is in $\mP(\mA)$. The only non-trivial point is to check that every block of the set partition $\{B_1,\ldots,B_d\}=\blocks(L)$ is $\mA$-connected. Let $k\in[d]$, and let $G=(B_k,E)$ be the graph associated to $(B_k,\de_L)$ in Definition~\ref{def:A-connected-partition}. Let $i,j\in B_k$. Since $i,j$ are in the same block of the partition $\blocks(L)$, we know that $L$ is included in a hyperplane of the form $\{x_i-x_j=s\}$ for some $s\in \ZZ$. Since $L\in\mL(\mA)$, this inclusion implies the existence of some indices $i=i_0,i_1,\ldots,i_q=j$ and some integers $s_1,\ldots,s_q$ such that 
$$L\subseteq \{x_{i_{p-1}}-x_{i_p}=s_p\}\in \mA$$ 
for all $p\in[q]$. Hence, the indices $i=i_0,i_1,\ldots,i_q=j$ are all in $B_k$ and the edge $\{i_{p-1},i_p\}$ is in $E$ for all $p\in[q]$. This shows that $i$ and $j$ are connected by the path $i_0,i_1,\ldots,i_{q}$ in $G$. Hence every block of the set partition $\{B_1,\ldots,B_d\}=\blocks(L)$ is $\mA$-connected, and $\Lambda(L)=(\blocks(L),\de_L)$ is in $\mP(\mA)$. 

Next we note that the map $\Lambda:\mL(\mA)\to \mP(\mA)$ is injective. Indeed, if it clear that an affine space $L\in \mL(\mA)$ is contained in a hyperplane $\{x_i-x_j=s\}$ of $\mA$ if and only if $i,j$ are in the same block of the partition $\blocks(L)$ and $s=\de_L(i)-\de_L(j)$. Thus $L$ can be recovered from $\Lambda(L)=(\blocks(L),\de_L)$.

Lastly, one can check that $\Lambda\circ\De=\Id_{\mP(\mA)}$. Indeed, for a pair $(\{B_1,\ldots,B_d\},\de)\in \mP(\mA)$, and $L=\De(\{B_1,\ldots,B_d\},\de)$, the assumption that each block $B_k$ is $\mA$-connected ensures that $\blocks(L)=\{B_1,\ldots,B_d\}$, and it follows without difficulty that $\Lambda(L)=(\{B_1,\ldots,B_d\},\de)$. 

Thus, $\Lambda$ is a bijection, and $\De$ is the inverse bijection.
\end{proof}

We are ready to describe the bijection $\Gamma$ from $\oTmn(\mA)$ to $\wTmA$. Given a marked tree $(T,\mu)\in\oTmn(\mA)$, we consider the set partition $\{B_1,\ldots,B_d\}=\blocks(\mu)$. 
For concreteness of notation, we adopt the convention that $\min(B_1)<\min(B_2)<\cdots<\min(B_d)$. 
Recall that each block $B_k$ corresponds to a path of marked edges in $(T,\mu)$ (each marked edge being between a node and its cadet child). Let $a_k\in B_k$ be the node of $T$ which is the ancestor of all the other nodes in $B_k$. We define a map $\de_\mu:[n]\to \NN$ by setting $\de_\mu(i)=\driftT(i)-\driftT(a_k)$ for all $i\in B_k$. By definition, each block $B_k$ is $\mA$-connected (see Definition~\ref{def:A-connected-tree}), which is equivalent to the fact that the pair $(B_k,\de_\mu)$ is $\mA$-connected (see Definition~\ref{def:A-connected-partition}). Thus, the pair $(\{B_1,\ldots,B_d\},\de_\mu)$ is in $\mP(\mA)$, and corresponds to a subspace $L=\Delta(\{B_1,\ldots,B_d\},\de_\mu)\in\mL(\mA)$ via the bijection of Lemma~\ref{lem:code-subspace}. 
Observe that, by Remark~\ref{rk:eqL}, 
$$L=\bigcap_{\substack{\{i,j\}\in \mu \\ i=s\child(j)}}\{x_i-x_j=s\}.$$
Now consider the tree $\wt T$ obtained from $T$ by 
\bitem
\item[1.] contracting all the marked edges: for all $k\in[d]$, the marked path of $T$ corresponding to $B_k$ is replaced by a node of $\wT$ labeled $k$, 
\item[2.] adding leaves as right children of each node of $\wT$ so as to get a total of $mn+1$ children for each node. 
\eitem
We define $\Gamma(T,\mu):=(L,\wt T)$. We will now proceed to check that $\wt T$ is in $\Tmnd(\wA_L)$, (which is where Lemma~\ref{lem:cadet-condition-simple} will be used), and that any tree in $\Tmnd(\wA_L)$ is obtained in this manner.

\fig{width=\linewidth}{contraction-tree}{The bijection $\Gamma:\oTmn(\mA)\to\wTmA$ associating to each marked tree $(T,\mu)\in \oTmn(\mA)$ the pair $(\wt T,L)$, where $L=\Delta(\blocks(\mu),\de_\mu)\in \mL(\mA)$.}

\begin{lemma} \label{lem:contraction-tree}
For any marked tree $(T,\mu)$ in $\oTmn(\mA)$, the pair $\Gamma(T,\mu)=(L,\wt T)$ is in $\wTmA$. 
\end{lemma}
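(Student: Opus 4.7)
The plan is to verify the two conditions defining $\wTmA$: that $L \in \mL(\mA)$, and that $\wt T \in \mT_{mn}^{\dim(L)}(\wA_L)$. The first follows from Lemma~\ref{lem:code-subspace}: the pair $(\blocks(\mu), \de_\mu)$ lies in $\mP(\mA)$ since $\de_\mu(a_k) = 0$ at each block ancestor and each block is $\mA$-connected by the hypothesis $(T,\mu) \in \oTmn(\mA)$ (Definition~\ref{def:A-connected-tree}), so $L = \Delta(\blocks(\mu), \de_\mu) \in \mL(\mA)$. The structural content of the second condition is also direct: contracting the marked-edge paths leaves $d = \dim(L)$ labelled nodes, and padding makes each node have exactly $mn+1$ children.

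The core task is the cadet condition for $\wT$: for every $(k,\ell,s) \in \tr_{mn}^d$ with $k = s\cadet_{\wT}(\ell)$, I need to produce $i \in B_k$ and $j \in B_\ell$ with $\{x_i - x_j = s + \de_\mu(i) - \de_\mu(j)\} \in \mA$, which by the definition of $\wA_L$ places $\{x_k - x_\ell = s\}$ in $\wA_L$. To set this up, I would write the block path of $B_\ell$ as $a_\ell = u_1, \ldots, u_q$ with $u_{i+1} = \si_i\cadet_T(u_i)$ for the marked edges, and let $a_k = \tau\child_T(u_{j^*})$ record how $B_k$'s ancestor attaches to $B_\ell$ in $T$; the concatenate-and-skip construction of $\wT$ then gives $s = (j^*-1)m + \tau$ up to a unit shift, and the cadet property in $\wT$ translates into specific leaf conditions on the non-marked children of $u_{j^*}, \ldots, u_q$ in $T$.

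In the favourable subcase $j^* = q$ with the non-escape branch of the $\mA$-cadet condition, the condition directly supplies an explicit witness $(i', j') \in B_k \times B_\ell$ with $\{x_{i'} - x_{j'} = \driftT(i') - \driftT(j')\} \in \mA$, which after drift bookkeeping yields $\{x_k - x_\ell = s\} \in \wA_L$. When the escape branch $\tau = 0,\ a_k < u_q$ applies, two subcases are immediate: if both blocks are singletons then $s = 0$ and $(k,\ell,s) \notin \tr_{mn}^d$; and if $s \leq \de^\ell$ (in particular whenever all $\si_i = m$ or $q = 1$), Lemma~\ref{lem:cadet-condition-simple} directly gives the membership.

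The main obstacle is the remaining case: either $j^* < q$ (so $a_k$ is not itself a cadet in $T$, because the marked cadet $u_{j^*+1}$ sits at a higher position than $\tau$), or $j^* = q$ with the escape clause but $s > \de^\ell$. In either situation, the cadet property in $\wT$ forces $u_{j^*+1}, \ldots, u_q$ to be a pure marked chain in $T$ (each non-marked child being a leaf) and $u_q$ to have all leaf children. I would exploit these leaf constraints together with the $\mA$-connectedness of $B_\ell$ (which supplies hyperplanes of $\mA$ linking block members at their natural drifts) and the contrapositive of strong transitivity of $\mA$ to construct a witness $(i,j) \in B_k \times B_\ell$ realising $\{x_k - x_\ell = s\} \in \wA_L$: the target differs from the naive tree hyperplane $\{x_{a_k} - x_{u_{j^*}} = \tau\}$ by the drift gap $\sum_{i=1}^{j^*-1}(m - \si_i) \geq 0$, which is precisely what can be filled in by chaining through connectivity edges of $B_\ell$.
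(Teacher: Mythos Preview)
Your setup is right, and the $j^*=q$ non-escape subcase is handled exactly as in the paper. The gap is in your formula for the child position in $\wT$: the contraction $\Gamma$ places children according to \emph{drift}, not according to a fixed window of width $m$ per block vertex. Concretely, if $a_k=\tau\child_T(u_{j^*})$ with $u_{j^*}\in B_\ell$, then $k$ is the $t$-child of $\ell$ in $\wT$ with
\[
t=\driftT(a_k)-\driftT(a_\ell)=\si_1+\cdots+\si_{j^*-1}+\tau,
\]
not $(j^*-1)m+\tau$. (This is visible from the inverse map $\Upsilon$ and from the identity $s=t-\de_L(j)$ used in the paper's proof.)

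Once you have the correct formula, your ``main obstacle'' evaporates. If $j^*<q$ then necessarily $\tau<\si_{j^*}$ (the slot $\si_{j^*}$ is occupied by the marked child $u_{j^*+1}$ and higher slots are leaves), hence $t<\si_1+\cdots+\si_{q-1}=\de^\ell$; since $|B_\ell|\ge q>1$, Lemma~\ref{lem:cadet-condition-simple} gives $\{x_k-x_\ell=t\}\in\wA_L$ immediately. And if $j^*=q$ with the escape clause $\tau=0$, then $t=\de^\ell$ exactly, so again Lemma~\ref{lem:cadet-condition-simple} applies (unless $|B_k|=|B_\ell|=1$, in which case $t=0$ and $k<\ell$, so $(k,\ell,t)\notin\tr_{mn}^d$). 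There is no residual case with $t>\de^\ell$, and no need to chain through connectivity edges or invoke strong transitivity directly; the paper's proof handles everything with Lemma~\ref{lem:cadet-condition-simple} in the $j^*<q$ branch and with the $\mA$-cadet condition in the $j^*=q$ branch.
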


\begin{proof}
Let $(T,\mu)$ be in $\oTmn(\mA)$, and let $L=\De(\{B_1,\ldots,B_d\},\de)\in\mL(\mA)$ be defined as above. We already know that $L$ is in $\mL(\mA)$, and it is clear that $\wt T$ is in $\mT_{mn}^d$. It remains to show that $\wt T$ is in $\Tmnd(\wA_L)$.
What we need to show is that for any triple $(k,\ell,t)\in \tr_{mn}^d$ such that $k=t\cadet(\ell)$ in $\wt T$, the hyperplane $\{x_k-x_\ell=t\}$ is in $\wA_L$. 

Let $(k,\ell,t)\in \tr_{mn}^d$ be such that $k=t\cadet(\ell)$ in $\wt T$. As above, we let $a_k\in B_k$ be the ancestor in $T$ of all the nodes in $B_k$. Since $k=t\cadet(\ell)$ in $\wt T$, the parent $j$ of the node $a_k$ in $T$ is in $B_\ell$. This situation is represented in Figure~\ref{fig:condition-tree-proof}(a).

\fig{width=\linewidth}{condition-tree-proof}{(a) Proof of Lemma~\ref{lem:contraction-tree}: the vertices $k$ and $\ell$ of the tree $\wT$ and the vertices $i$, $j$ and $a_k$ in the tree $T$. (b) Definition of the map $\Ups$: the node $k$ of $\wT\in\Tmnd(\wA_L)$ and the corresponding path of marked edges in $T\in\oTmn(\mA)$.}

In the case where the vertex $j$ of $T$ is not the descendant of all the other vertices in $B_\ell$, one has $t<\de^\ell=\max(\de_L(j')\mid j'\in B_\ell)$. Hence, in this case Lemma~\ref{lem:cadet-condition-simple} implies that $\{x_k-x_\ell=t\}$ is in $\wA_L$, as wanted. 

We now treat the case where the vertex $j$ is the descendant of all the other vertices in $B_\ell$. In this case, $a_k=s\cadet(j)$ in $T$, where $s=t-\de_L(j)$. Since $(T,\mu)$ belongs to $\oTmn(\mA)$, at least one of the following situations holds:
\begin{compactitem}
\item[(a)] there exists $i'\musim a_k$ and $j'\musim j$ such that $\{x_{i'}-x_{j'}=\driftT(i')-\driftT(j')\}$ is in $\mA$, or 
\item[(b)] $s=0$ and $a_k<j$. 
\end{compactitem}
In situation (a), the hyperplane $\{x_{i'}-x_{j'}=t+\de_L(i')-\de_L(j')\}$ is in $\mA$, since 
$$\driftT(i')-\driftT(j')=(\driftT(a_k)+\de_L(i'))-(\driftT(a_\ell)+\de_L(j'))=t+\de_L(i')-\de_L(j').$$ 
Thus, the hyperplane $\{x_k-x_\ell=t\}$ is in $\wA_L$, as wanted.

In situation (b), one has $t=\de_L(j)=\max(\de_L(j')\mid j'\in B_\ell)$. Hence Lemma~\ref{lem:cadet-condition-simple} ensures $\{x_k-x_\ell=t\}$ is in $\wA_L$ as wanted, unless $|B_k|=|B_\ell|=1$. 
Finally, in the case $|B_k|=|B_\ell|=1$ one has $B_k=\{a_k\}$, $B_\ell=\{j\}$ with $a_k<j$, which implies $k<\ell$ because of the convention $\min(B_1)<\cdots <\min(B_d)$. This, together with $t=s=0$, contradicts the assumption that $(k,\ell,t)$ is in $\tr_{mn}^d$ (hence there is nothing to prove in this case).
This completes the proof that $\wt T$ is in $\Tmnd(\wA_L)$. Thus, $\Gamma(T,\mu)$ is in $\wTmA$. 
\end{proof}

It is not hard to see that the map $\Gamma:\oTmn(\mA)\to \wTmA$ is injective. Indeed, given $\Gamma(T,\mu)=(L,\wt T)$, one can construct the function $\de_L$ which specifies how to ``dilate'' each node of $\wt T$ into a path of marked cadet edges. This is represented in Figure~\ref{fig:condition-tree-proof}(b). We will now describe this process more formally, thereby defining a map $\Upsilon$ which is the inverse of $\Gamma$.

Let $(L,\wt T)\in \wTmA$. Let $(\{B_1,\ldots,B_d\},\de_L)=\Lambda(L)$, with our usual convention $\min(B_1)<\min(B_2)<\cdots<\min(B_d)$. We want to replace each node $k\in[d]$ of $\wt T$ by a path of marked cadet edges with vertices in $B_k$, so as to produce a marked tree $(T,\mu)$ in $\oTmn$.
Recall that $\prec_L$ is the total order on $B_k$ such that $i\precL j$ if $\de_L(i)<\de_L(j)$ or ($\de_L(i)=\de_L(j)$ and $i<j$). Let $j_1\prec_L j_2\prec_L \cdots \precL j_q$ be the elements of $B_k$, and let $s_p=\de_L(j_{p+1})-\de_L(j_p)$ for all $p\in [q-1]$. 
Let 
$$\de^k~:=~\max(\de_L(j)\mid j\in B_k)~=~\de_L(j_q)~=~s_1+\cdots +s_{q-1}.$$
It is easy to see that $\wA_L$ has no hyperplane of the form $\{x_k-x_{\ell}=t\}$ for $t>\de^k+m$. Since  $\wt T$ is in $\Tmnd(\wA_L)$, this implies that the $t$-child of the node $k$ in $\wt T$ is a leaf for all $t>\de^k+m$.
Hence, one can delete the $t$-child of the node $k$ of $\wT$ for all $t>\de^k+m$ (these are all leaves), and then replace the node $k$ by a path $P_k$ of made of the vertices $j_1,\ldots,j_q$ with $j_{p+1}=s_p\cadet(j_p)$ for all $p\in[q-1]$, and add $m-s_p$ leaves as right children of each node of $j_p$ so that every node in $P_k$ has $m+1$ children. We declare all the cadet edges of the path $P_k$ as marked (for all $k\in[d]$), and denote by $\Ups(L,T)=(T,\mu)\in \oTmn$ the marked tree obtained in this manner.

\begin{lemma} \label{lem:uncontraction-tree}
For any pair $(L,\wt T)$ in $\wTmA$, the marked tree $(T,\mu)=\Ups(L,T)$ is in $\oTmn(\mA)$.
\end{lemma}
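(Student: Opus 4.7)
The plan is to verify the three defining conditions of $\oTmn(\mA)$ for $(T,\mu)=\Ups(L,\wT)$: that it is a well-formed marked $(m,n)$-tree, that each of its blocks is $\mA$-connected, and that it satisfies the $\mA$-cadet condition. The first two come almost for free: by construction the blocks of $(T,\mu)$ are the sets $B_1,\ldots,B_d$ of $\blocks(L)$, and since $L\in\mL(\mA)$ forces each pair $(B_k,\de_L)$ to be $\mA$-connected in the sense of Definition~\ref{def:A-connected-partition} (by Lemma~\ref{lem:code-subspace}), the identity $\driftT(i)-\driftT(i')=\de_L(i)-\de_L(i')$ for $i,i'\in B_k$ translates this into $\mA$-connectedness in the sense of Definition~\ref{def:A-connected-tree}. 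The constraint that a marked $0$-cadet edge $\{j_p,j_{p+1}\}$ of $(T,\mu)$ satisfy $j_p<j_{p+1}$ is guaranteed by the convention used to order $j_1\precL\cdots\precL j_q$ inside each $P_k$.

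The substantive work is the $\mA$-cadet condition. First I would observe that any non-marked cadet edge $\{i,j\}$ of $T$, with $i=s\cadet(j)$, must join two distinct blocks $B_k\ni i$ and $B_\ell\ni j$, with $i=a_k$ and $j=j_q$ (the deepest node of $B_\ell$): if $j$ were an interior node $j_p$ of $P_\ell$, the marked edge $\{j_p,j_{p+1}\}$ would already occupy the cadet position, forcing $i\in B_\ell$, a contradiction. Under the uncontraction this edge corresponds in $\wT$ to $k=t\cadet(\ell)$ with $t=s+\de^\ell$. A short computation, using $\driftT(a_k)=\driftT(j_q)+s$ and $\driftT(j_q)=\driftT(a_\ell)+\de^\ell$, yields $\driftT(i')-\driftT(j')=\de_L(i')-\de_L(j')+t$ for every $i'\in B_k$, $j'\in B_\ell$. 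Hence whenever $\{x_k-x_\ell=t\}\in\wA_L$, unfolding the definition of $\wA_L$ produces $i'\in B_k$ and $j'\in B_\ell$ such that $\{x_{i'}-x_{j'}=\driftT(i')-\driftT(j')\}\in\mA$, which is exactly the $\mA$-connection clause of the $\mA$-cadet condition.

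The main obstacle will be securing this membership $\{x_k-x_\ell=t\}\in\wA_L$. When $t>0$ the triple $(k,\ell,t)$ lies in $\tr_{mn}^d$ and $\wT\in\Tmnd(\wA_L)$ supplies it directly. The delicate case is $t=0$, which forces $s=0$ and $\de^\ell=0$, and in which the cadet constraint on $\wT$ is only informative when $k>\ell$; when $k<\ell$ the triple $(k,\ell,0)$ is not in $\tr_{mn}^d$ and $\wT$ imposes no constraint at all. Here I would invoke Lemma~\ref{lem:cadet-condition-simple}: when $|B_k|>1$ or $|B_\ell|>1$ it furnishes $\{x_k-x_\ell=0\}\in\wA_L$, so the argument above goes through; and when $|B_k|=|B_\ell|=1$ the ordering convention $\min(B_1)<\cdots<\min(B_d)$ gives $i=\min(B_k)<\min(B_\ell)=j$, which realises the alternative clause $s=0$ and $i<j$. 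This exhausts all cases and establishes the $\mA$-cadet condition, placing $(T,\mu)$ in $\oTmn(\mA)$.
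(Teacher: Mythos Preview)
Your proof is correct and follows essentially the same approach as the paper's: you deduce $\mA$-connectedness of each block from Lemma~\ref{lem:code-subspace}, translate each non-marked cadet edge of $T$ into a cadet relation $k=t\cadet(\ell)$ in $\wT$, use the hypothesis $\wT\in\Tmnd(\wA_L)$ when $(k,\ell,t)\in\tr_{mn}^d$, and handle the residual case $t=0$, $k<\ell$ via Lemma~\ref{lem:cadet-condition-simple} exactly as the paper does. Your treatment is in fact slightly more explicit in a few places (verifying the marked-$0$-cadet constraint $j_p<j_{p+1}$, justifying why $j$ is the deepest node of its block), but the logical structure is the same.
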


\begin{proof}
Let $(L,\wt T)\in \wTmA$, let $(\{B_1,\ldots,B_d\},\de_L)=\Lambda(L)$, and let $(T,\mu)=\Ups(L,T)$. 
By definition of $\Ups$, one has $\blocks(\mu)=\{B_1,\ldots,B_d\}$. By Lemma~\ref{lem:code-subspace}, the pair $(\{B_1,\ldots,B_d\},\de_L)$ is in $\mP(\mA)$. This implies that each pair $(B_k,\de_L)$ is $\mA$-connected (see Definition~\ref{def:A-connected-partition}), hence each block $B_k$ of $\blocks(\mu)$ is $\mA$-connected (see Definition~\ref{def:A-connected-tree}). Thus, $(T,\mu)$ is $\mA$-connected.

It remains to show that $(T,\mu)$ satisfies the $\mA$-cadet condition.
Let $e=\{i,j\}$ be a non-marked cadet edge of $(T,\mu)$, with $i=s\cadet(j)$. We need to show that either ($s=0$ and $i<j$) or (there exists $i'\musim i$ and $j'\musim j$ such that the hyperplane $\{x_{i'}-x_{j'}=\driftT(i')-\driftT(j')\}$ is in $\mA$).
One has $i\in B_k$ and $j\in B_\ell$ for some $k,\ell\in [d]$, with $i$ the ancestor in $T$ of all the vertices in $B_k$, and $j$ the descendant in $T$ of all the vertices in $B_\ell$. Moreover, in $\wt T$ one has $k=t\cadet(\ell)$ with $t=s+\de^\ell$, where $\de^\ell=\de_L(j)=\max(\de_L(j')\mid j'\in B_\ell)$.
Since $\wt T$ is in $\Tmnd(\wA_L)$, one of the following situations holds:
\bitem
\item[(a)] $\{x_{k}-x_{\ell}=t\}$ is in $\wA_L$, or
\item[(b)] $t=0$ and $k<\ell$ (so that $(k,\ell,t)$ is not in $\tr_{mn}^n$). 
\eitem
In situation (a), by definition of $\wA_L$, there is a hyperplane of the form $\{x_{i'}-x_{j'}=s'\}$ in $\mA$ such that $i'\in B_k$, $j'\in B_\ell$ and $t=s'-\de_L(i')+\de_L(j')$. 
Solving for $s'$ we get
$$s'=t+\de_L(i')-\de_L(j')=(s+\de^\ell)+(\driftT(i')-\driftT(a_k))-(\driftT(j')-\driftT(a_\ell))=\driftT(i')-\driftT(j'),$$
since $\driftT(a_k)-\driftT(a_\ell)=s+\de^\ell$. Hence we have found $i'\musim i$ and $j'\musim j$ such that the hyperplane $\{x_{i'}-x_{j'}=\driftT(i')-\driftT(j')\}$ is in $\mA$, as wanted.
If (a) does not hold and (b) holds, then Lemma~\ref{lem:cadet-condition-simple} implies that $|B_k|=|B_\ell|=1$. Therefore $i<j$ by our convention 
$\min(B_1)<\min(B_2)<\cdots<\min(B_d)$. Hence $s=0$ and $i<j$ as wanted.
\end{proof}

It is not hard to see that the maps $\Gamma:\oTmn(\mA)\to \wTmA$ and $\Ups:\wTmA\to\oTmn(\mA)$ satisfy $\Ga\circ\Ups=\Id$ and $\Ups\circ\Ga=\Id$. Thus,  these maps are bijections between $\oTmn(\mA)$ and $\wTmA$.

In order to conclude the proof of Theorem~\ref{thm:main}, it now suffices to show that the diagram in Figure~\ref{fig:diagramproof} is commutative. In other words, we need to show that $\bPhi_\mA=\Theta\circ \wt \Phi_\mA\circ \Gamma$. Consider a marked tree $(T,\mu)\in \oTmn(\mA)$, and let $(L,\wt T)=\Gamma(T,\mu)$. As observed above, one has
$$L=\bigcap_{\substack{\{i,j\}\in \mu \\ i=s\child(j)}}\{x_i-x_j=s\},$$
hence 
$$\bPhi_\mA(T,\mu)=L\cap\left(\bigcap_{\substack{(i,j,s)\in \trmn(\mA)\\ i\nmusim j \\ i\precT s\child(j)}}\{x_i-x_j< s\}\right)\cap \left(\bigcap_{\substack{(i,j,s)\in \trmn(\mA)\\i\nmusim j \\ i\succeqT s\child(j)}}\{x_i-x_j>s\}\right).$$
\newcommand{\trAL}{\tr_{mn}^d(\wA_L)}
We need to compare this expression to 
\begin{eqnarray*}
\Theta\circ \wt \Phi_\mA(L,\wt T)\!\!\!&=&\!\!\!
\psi_L^{-1}\left(\left(\bigcap_{\substack{(k,\ell,t)\in \trAL\\ k\precwT t\child(\ell)}}\{x_k-x_\ell< t\}\right)\cap\left(\bigcap_{\substack{(k,\ell,t)\in \trAL\\ k\succeqwT t\child(\ell)}}\{x_k-x_\ell> t\} \right)\right)\\
&=&\!\!\!\left(\bigcap_{\substack{(k,\ell,t)\in \trAL\\ k\precwT t\child(\ell)}}\!\!\!\!\!\psi_L^{-1}(\{x_k-x_\ell< t\})\right)\cap\left(\bigcap_{\substack{(k,\ell,t)\in \trAL\\ k\succeqwT t\child(\ell)}}\!\!\!\!\!\psi_L^{-1}(\{x_k-x_\ell> t\}) \right),
\end{eqnarray*}
where the second equality holds because $\psi_L:L\to \RR^d$ is bijective.

By definition of $\wA_L$, a triple $(k,\ell,t)\in\tr_{mn}^d$ is in $\trAL$ if and only if there exists a hyperplane $\{x_i-x_j=s\}$ in $\mA$ such that $i\in B_k$, $j\in B_\ell$ and $t=s-\de_L(i)+\de_L(j)$. In this situation we say that $(i,j,s)$ is a \emph{representative} of $(k,\ell,t)$.

It is easy to check that if $(i,j,s)$ is a representative of a triple $(k,\ell,t)\in\trAL$ then
$$\psi_L^{-1}(\{x_k-x_\ell< t\})=L\cap\{x_i-x_j<s\}\textrm{ and }\psi_L^{-1}(\{x_k-x_\ell> t\})=L\cap\{x_i-x_j>s\}.$$
Indeed, the first identity follows from observing that 
$$\psi_L^{-1}(\{x_k-x_\ell< t\})=L\cap\{(x_1,\ldots,x_n)\in \RR^n\mid x_{a_k}-x_{a_\ell}<s-\de_L(i)+\de_L(j)\},$$ where $a_k$ and $a_\ell$ are the minimum element of $B_k$ and $B_\ell$ respectively (for the $\precL$ order), and $x_i=x_{a_k}+\de_L(i)$, and $x_j=x_{a_\ell}+\de_L(j)$. The second identity is proved identically.

We call a representative $(i,j,s)$ of $(k,\ell,t)\in\trAL$ a \emph{forward} representative if $(i,j,s)$ is in $\trmn(\mA)$ and a \emph{backward} representative if $(j,i,-s)$ is in $\trmn(\mA)$. Note that any representative is either forward or backward (but not both).
Hence proving that $\bPhi_\mA(T,\mu)=\Theta\circ \wt \Phi_\mA(L,\wt T)$ amounts to showing that the following holds for any triple $(k,\ell,t)\in \trAL$:
\bitem
\item[(a)] For any forward representative $(i,j,s)$ of $(k,\ell,t)$, one has $k\precwT w$ if and only if $i\precT v$, where $w=t\child(\ell)$ (in $\wt T$) and $v=s\child(j)$ (in $T$). 
\item[(b)] For any backward representative $(i,j,s)$ of $(k,\ell,t)$, one has $k\precwT w$ if and only if $j\succeqT v$, where $w=t\child(\ell)$ (in $\wt T$) and $v=(-s)\child(i)$ (in $T$). 
\eitem

We proceed to prove Properties (a) and (b). We start with (a).
Let $(i,j,s)$ be a forward representative of $(k,\ell,t)\in \trAL$. Let $w=t\child(k)$ and $v=s\child(j)$. 
Observe that 
\begin{equation}\label{eq:drift}
\drift_T(i)=\drift_\wT(k)+\de_L(i)\textrm{ and }\drift_T(j)=\drift_\wT(\ell)+\de_L(j).
\end{equation}
Combining~\eqref{eq:drift} with $\drift_T(v)=s+\drift_T(j)$, $\drift_\wT(w)=t+\drift_\wT(\ell)$ and $t=s-\de_L(i)+\de_L(j)$ gives
$$\driftT(i)-\driftT(v)=\drift_\wT(k)-\drift_\wT(w).$$ 
This proves Property (a) except in the case where $\drift_T(i)= \drift_T(v)$ and $\drift_\wT(k)= \drift_\wT(w)$. In this remaining case, we need to show that $\path_T(i)\prec \path_T(v)$ if and only if $\path_\wT(k)\prec \path_\wT(w)$, 
where the order $\prec$ on finite sequence of integers is defined by $(s_1,\ldots,s_a)\prec (s_1',\ldots,s_{b}')$ if and only if there exists $c\leq a$ such that $(s_1,\ldots,s_c)=(s_1',\ldots,s_c')$ and ($c=a$ or $s_{c+1}>s'_{c+1}$). This amounts to an easy but tedious check, which is illustrated in Figure~\ref{fig:proof-treeorder1}. 
Note that $\path_T(i)\prec \path_T(v)$ means that either $i$ is a strict ancestor of $v$ in $T$, or that the path from the root vertex to $i$ in $T$ is strictly to the right of the path from the root vertex to $v$. These situations are represented on the left of Figure~\ref{fig:proof-treeorder1}, and it easy to see that in those cases $\path_\wT(k)\prec \path_\wT(w)$. The situations where $\path_T(i)\succeq \path_T(v)$ are represented on the right of Figure~\ref{fig:proof-treeorder1}, and it easy to see that in those cases $\path_\wT(k)\succeq \path_\wT(w)$. This concludes the proof of Property (a).

\fig{width=\linewidth}{proof-treeorder1}{Proof of Property (a) in the case where $\drift_T(i)= \drift_T(v)$ and $\drift_\wT(k)= \drift_\wT(w)$. The possible configurations for the tree $T$ are represented in the top row, while the possible configurations for the tree $\wT$ are represented in the bottom row. The arrows indicate the possible transformations from $T$ to $\wT$. Recall that the path of marked edges of $T$ containing $i$ (resp. $j$) becomes the node $k$ (resp. $\ell$) of $\wT$.
Left: The configurations such that $\path_T(i)\prec \path_T(v)$, and the corresponding configurations in $\wT$ (indicated by arrows). Right: The configurations such that $\path_T(i)\succeq \path_T(v)$, and the corresponding configurations in $\wT$ (indicated by arrows).}

The proof of Property (b) is similar. Let $(i,j,s)$ be a backward representative of $(k,\ell,t)\in \trAL$. Let $w=t\child(k)$ and $v=(-s)\child(i)$. Combining~\eqref{eq:drift} with $\drift_T(v)=-s+\drift_T(i)$, $\drift_\wT(w)=t+\drift_\wT(\ell)$ and $t=s-\de_L(i)+\de_L(j)$ shows that 
$$ \driftT(j)-\driftT(v)=\drift_\wT(w)-\drift_\wT(k).$$
This proves Property (b) except in the case where $\drift_T(j)= \drift_T(v)$ and $\drift_\wT(k)= \drift_\wT(w)$. In this remaining case, we need to show $\path_T(j)\prec \path_T(v)$ if and only if $\path_\wT(k)\succeq \path_\wT(w)$. This equivalence is illustrated in Figure~\ref{fig:proof-treeorder2}.
\fig{width=\linewidth}{proof-treeorder2}{Proof of Property (b) in the case where $\drift_T(j)= \drift_T(v)$ and $\drift_\wT(k)= \drift_\wT(w)$. 
The possible configurations for the tree $T$ are represented in the top row, while the possible configurations for the tree $\wT$ are represented in the bottom row.
Left: The configurations such that $\path_T(j)\prec \path_T(v)$, and the corresponding configurations in $\wT$. Right: The configurations such that $\path_T(j)\succeq \path_T(v)$, and the corresponding configurations in $\wT$.}

This concludes the proof of Properties (a) and (b) (thereby showing that $\bPhi_\mA=\Theta\circ \wt \Phi_\mA\circ \Gamma$), and completes the proof of Theorem~\ref{thm:main}.



\smallskip
\noindent{\bf Acknowledgments:} This work was supported by NSF Grant DMS-2154242.

\bibliographystyle{alpha}
\bibliography{biblio-hyperplanes}

\end{document}